 \newtheorem{theorem}{Theorem}[section]
 \newtheorem{cor}[theorem]{Corollary}
 \newtheorem{lemma}[theorem]{Lemma}
 \newtheorem{proposition}[theorem]{Proposition}
 \theoremstyle{definition}
  \newtheorem{ex}[theorem]{Example}
 \theoremstyle{remark}
 \newtheorem{remark}[theorem]{Remark}
 \numberwithin{equation}{section}
\newcommand{\YL}{\mathcal{L}}
\DeclareMathOperator{\fs}{\mathfrak{F}}
\DeclareMathOperator{\key}{\kappa}
\DeclareMathOperator{\yqk}{\hat{\mathfrak{Q}}}
\DeclareMathOperator{\qk}{\mathfrak{Q}}
\DeclareMathOperator{\yqs}{\hat{\mathscr{S}}}
\DeclareMathOperator{\qs}{\mathscr{S}}
\DeclareMathOperator{\atom}{\mathcal{A}}
\DeclareMathOperator{\ya}{\hat{\mathcal{A}}}
\DeclareMathOperator{\std}{std}
\DeclareMathOperator{\yfs}{\hat{\mathfrak{F}}}
\DeclareMathOperator{\yms}{\hat{\mathfrak{M}}}
\DeclareMathOperator{\ykey}{\hat{\kappa}}
\DeclareMathOperator{\fp}{\mathfrak{L}}
\DeclareMathOperator{\yfp}{\hat{\mathfrak{L}}}
\DeclareMathOperator{\msp}{\mathfrak{M}}
\DeclareMathOperator{\sch}{\mathfrak{S}}
\DeclareMathOperator{\ysch}{\widehat{\mathfrak{S}}}
\DeclareMathOperator{\yflagschur}{\widehat{\mathfrak{FS}}}
\DeclareMathOperator{\ykeymod}{\hat{\mathcal{K}}}
\DeclareMathOperator{\flagschur}{\mathfrak{FS}}
\DeclareMathOperator{\keymod}{\mathcal{K}}
\DeclareMathOperator{\yw}{\hat{\mathcal{W}}}
\DeclareMathOperator{\w}{\mathcal{W}}
\DeclareMathOperator{\colform}{colform}
\DeclareMathOperator{\ch}{ch}
\DeclareMathOperator{\col}{col}
\newcommand{\SSYT}{\mathrm{SSYT}}
\newcommand{\YCT}{\mathrm{YCT}}
\newcommand{\FF}{\mathrm{FF}}
\newcommand{\YFF}{\mathrm{YFF}}
\newcommand{\MF}{\mathrm{MF}}
\newcommand{\YMF}{\mathrm{YMF}}
\newcommand{\ASSF}{\mathrm{ASSF}}
\newcommand{\YASSF}{\mathrm{YASSF}}
\newcommand{\KSSF}{\mathrm{KSSF}}
\newcommand{\YKSSF}{\mathrm{YKSSF}}
\newcommand{\LF}{\mathrm{LF}}
\newcommand{\YLF}{\mathrm{YLF}}
\newcommand{\RCT}{\mathrm{RCT}}
\newcommand{\QF}{\mathrm{QF}}
\newcommand{\YQF}{\mathrm{YQF}}
\newcommand{\PD}{\mathrm{PD}}
\newcommand{\YPD}{\mathrm{YPD}}
\newcommand{\RFYC}{\mathrm{RFYC}}
\newcommand{\RF}{\mathrm{RF}}
\newcommand{\frev}{\mathrm{frev}}
\newcommand{\maxcomp}{\mathrm{maxcomp}}
\newcommand{\keytab}{\mathrm{key}}
\newcommand{\comp}{\mathrm{comp}}
\newcommand{\Fill}{\mathrm{Fill}}
\newcommand{\Red}{\mathrm{Red}}
\newcommand{\wt}{{\rm wt}}
\newcommand{\ywt}{{\rm ywt}}
\newcommand{\rev}{{\rm rev}}
\newcommand{\flatten}{{\rm flat}}
\newcommand{\sort}{{\rm sort}}
\newcommand{\revsort}{{\rm revsort}}
\newcommand{\Sym}{\ensuremath{\mathrm{Sym}}}
\newcommand{\QSym}{\ensuremath{\mathrm{QSym}}}
\newcommand{\Poly}{\ensuremath{\mathrm{Poly}}}
\newcommand{\excise}[1]{}
\newlength{\cellsize}
\newcommand\tableau[1]{
\vcenter{
\let\\=\cr
\baselineskip=-16000pt
\lineskiplimit=16000pt
\lineskip=0pt
\halign{&\tableaucell{##}\cr#1\crcr}}}
\newcommand{\tableaucell}[1]{{%
\def \arg{#1}\def \void{}%
\ifx \void \arg
\vbox to \cellsize{\vfil \hrule width \cellsize height 0pt}%
\else
\unitlength=\cellsize
\begin{picture}(1,1)
\put(0,.22){\makebox(1,1)[b]{$#1$}}
\put(0,0){\line(1,0){1}}
\put(0,1){\line(1,0){1}}
\put(0,0){\line(0,1){1}}
\put(1,0){\line(0,1){1}}
\end{picture}%
\fi}}
\newcommand\elbow{
\begin{picture}(10,10)
\thicklines
\put(10,10){\oval(10,10)[bl]}
\put(0,0){\oval(10,10)[tr]}
\end{picture}}
\newcommand\upelb{
\begin{picture}(10,10)
\thicklines
\put(0,0){\oval(10,10)[tr]}
\end{picture}}
\newcommand\cross{
\begin{picture}(10,10)
\thicklines
\put(5,0){\line(0,1){10}}
\put(0,5){\line(1,0){10}}
\end{picture}}
\newcommand\gridify[1]{\vbox to 10\unitlength{\vss\hbox to 10\unitlength{\hss$_{#1}$\hss}\vss}}
\newcommand\pipes[1]{\vtop{\let\\=\cr
\setlength\baselineskip{-10000pt}
\setlength\lineskiplimit{10000pt}
\setlength\lineskip{0pt}
\halign{&\gridify{##}\cr#1\crcr}}}
\begin{document}

\title{The ``Young" and ``reverse" dichotomy of polynomials}

\author[S. Mason]{Sarah Mason}
\address{Department of Mathematics, Wake Forest University, Winston-Salem, NC 27109, U.S.A.}
\email{masonsk@wfu.edu}

\author[D. Searles]{Dominic Searles}
\address{Department of Mathematics and Statistics, University of Otago, 730 Cumberland St., Dunedin 9016, New Zealand}
\email{dominic.searles@otago.ac.nz}

\subjclass[2010]{Primary 05E05}

\date{\today}

\keywords{Key polynomials, quasisymmetric Schur polynomials, Young quasisymmetric Schur polynomials}

\maketitle
\maketitle

\begin{abstract}
A ``flip-and-reversal'' involution arising in the study of quasisymmetric Schur functions provides a passage between what we term ``Young'' and ``reverse'' variants of bases of polynomials or quasisymmetric functions. Building on this perspective, which has found recent application in the study of $q$-analogues of combinatorial Hopf algebras and generalizations of dual immaculate functions, we develop and explore Young analogues of well-known bases for polynomials.  We prove several combinatorial formulas for the Young analogue of the key polynomials, show that they form the generating functions for left keys, and provide a representation-theoretic interpretation of Young key polynomials as traces on certain modules. We also give combinatorial formulas for the Young analogues of Schubert polynomials, including their crystal graph structure. We moreover determine the intersections of (reverse) bases and their Young counterparts, further clarifying their relationships to one another. 
\end{abstract}

\tableofcontents

\section{Introduction}

Tableau models provide an indispensable framework for giving explicit positive combinatorial formulas for important families of polynomials and their relationships to one another. The celebrated \emph{Schur polynomials}, which form a basis for the ring $\Sym_n$ of symmetric polynomials in $n$ variables, are famously realized as the weight generating functions of \emph{semistandard Young tableaux}: tableaux of partition shape whose entries weakly increase from left to right in each row and strictly increase from bottom to top in each column.  In fact, this definition may be reversed and Schur polynomials may alternatively be realized as the weight generating functions of \emph{semistandard reverse tableaux}, whose entries weakly decrease from left to right along rows rows and strictly decrease from bottom to top in each column. 

$\Sym_n$ is a subring of the ring $\QSym_n$ of quasisymmetric polynomials. Basis elements of $\QSym_n$ are indexed by \emph{compositions} (sequences of positive integers) with at most $n$ parts. The semistandard reverse tableau model used in $\Sym_n$ naturally extends to produce tableaux of composition shape. The \emph{diagram} $D(\alpha)$ of a composition $\alpha$, written in French notation, is the diagram consisting of left-justified rows of boxes whose $i^{th}$ row from the bottom contains $\alpha_i$ boxes.  A \emph{tableau} (of shape $\alpha$) is a filling of $D(\alpha)$ with positive integers.  A \emph{reverse composition tableau} is a tableau with entries no larger than $n$, so that entries \emph{weakly decrease} from left to right along rows. 

Imposing different choices of further restrictions on the entries produces collections of reverse composition tableaux whose weight generating functions are, for example, the \emph{quasisymmetric Schur polynomial} \cite{HLMvW09}, the \emph{fundamental quasisymmetric polynomial} \cite{Ges84}, or the \emph{monomial quasisymmetric polynomial} \cite{Ges84} corresponding to $\alpha$.  On the other hand, certain other bases of $\QSym_n$ are naturally described instead by restrictions of \emph{Young} composition tableaux, where entries \emph{weakly increase} from left to right along rows. Examples include the \emph{dual immaculate polynomials} \cite{BerBerSalSerZab14}, the \emph{Young quasisymmetric Schur polynomials} \cite{LMvWbook}, and the \emph{extended Schur polynomials} \cite{Assaf.Searles:3}.

Extending further, \emph{reverse fillings} provide a combinatorial framework that naturally generalizes the model of reverse composition tableaux to the ring $\Poly_n$ of polynomials in $n$ variables. Basis elements of $\Poly_n$ are indexed by \emph{weak compositions}: sequences of nonnegative integers.  The diagram $D(a)$ of a weak composition $a$ is the diagram in $\mathbb{N}\times \mathbb{N}$ having $a_i$ boxes in row $i$, left-justified.  A \emph{filling} (of shape $a$) is an assignment of positive integers, no larger than $n$, to the boxes of $D(a)$.  A reverse filling is a filling in which entries weakly decrease from left to right along each row.

By imposing further restrictions on the entries, one can obtain a set of reverse fillings of $D(a)$ whose weight generating function is, for example, the \emph{key polynomial}~\cite{RS95}, the \emph{quasi-key polynomial}~\cite{Assaf.Searles:2}, the \emph{Demazure atom}~\cite{Mas09}, or the \emph{fundamental slide polynomial}~\cite{Sea20} corresponding to $a$. At present, the majority of well-studied bases for $\Poly_n$ are described in terms of reverse fillings, i.e., with decreasing rows.

As noted earlier, Schur polynomials may be realized in terms of either semistandard Young tableaux or semistandard reverse tableaux. This coincidence can be understood in terms of an involution on tableaux whose entries are at most $n$, namely, replacing each entry $i$ with $n+1-i$.  This bijectively maps semistandard Young tableaux to semistandard reverse tableaux and vice versa. 
While this map is weight-reversing rather than preserving, the fact that Schur polynomials are symmetric means that the multiset of weights of semistandard Young tableaux is equal to the multiset of weights of semistandard reverse tableaux.

This map inspires a closely-related \emph{flip-and-reverse} map on composition tableaux, defined by reversing the order of the rows (\emph{reverse}) and replacing every entry $i$ with $n+1-i$ (\emph{flip}). This weight-reversing map changes decreasing rows to increasing rows and vice versa. As is the case for Schur polynomials, the flip-and-reverse map preserves both the monomial and fundamental bases of $\QSym_n$. However, bases of $\QSym_n$ are not preserved in general. In particular, the reverse composition tableaux that generate the quasisymmetric Schur polynomial corresponding to $\alpha$ are mapped to precisely the Young composition tableaux that generate the Young quasisymmetric Schur polynomial corresponding to $\rev(\alpha)$, the composition obtained by reading $\alpha$ in reverse. Typically a Young quasisymmetric Schur polynomial  is not also a quasisymmetric Schur polynomial; we characterize their coincidences in Section~\ref{sec:background}. 
  
The flip-and-reverse map extends naturally to fillings of weak composition diagrams, giving two parallel constructions of bases for $\Poly_n$, one (\emph{reverse}) defined by reverse fillings and one (\emph{Young}) defined by \emph{Young fillings}, i.e., fillings in which entries increase from left to right along rows.  The fillings obtained by applying the flip-and-reverse map to those reverse fillings that generate a particular basis of $\Poly_n$ generate a Young analogue of that basis.  

Young analogues of the quasi-key and fundamental slide bases and a reverse analogue of the dual immaculate functions were introduced in~\cite{MasSea20} and properties of these bases were developed including a number of useful applications. In particular, these analogues were used to extend a result of \cite{AHM18} on positive expansions of dual immaculate functions to the full polynomial ring, to establish properties of stable limits of these polynomials and their expansions, and to uncover a previously-unknown connection between dual immaculate functions and Demazure atoms. These results necessitated repeated passage between reverse and Young analogues.  In particular, reverse analogues were needed to study stable limits for a polynomial ring analogue of the dual immaculate functions, whereas Young analogues were needed to connect to established results in $\QSym_n$ from \cite{AHM18}. In a similar vein, Young analogues of pre-existing reverse bases of $\QSym_n$ were applied in the study of $q$-analogues of combinatorial Hopf algebras \cite{Li15} and skew variants of quasisymmetric bases \cite{MN-SkewRS} to take advantage of classical combinatorics in $\Sym_n$ concerning Schur functions and Young tableaux. 
This type of relabelling is also used in~\cite{PreRic21} (there called ``shifting") to simplify arguments relating to the equivariant cohomology of Springer fibers for $GL_n(\mathbb{C})$.

We are motivated by the utility of the flip-and-reverse perspective to explore and develop further Young analogues of bases of $\Poly_n$ and establish structural results.  The Young analogue of the key polynomials is of particular interest and forms a primary focus. In fact, this Young basis has already found application: this variant of the key polynomials is used in~\cite{HRS18} to obtain the Hilbert series of a generalization of the coinvariant algebra.  In Section 3 we establish a connection with left and right \emph{keys} of semistandard Young tableaux, proving in Theorem~\ref{thm:leftkeygen} that the Young key polynomials are in fact a generating function for semistandard Young tableaux whose left key is greater than a fixed key.  We establish an analogous result for the Young analogue of the Demazure atom basis.  We also provide a representation-theoretic construction for the Young key polynomials as traces of the action of a diagonal matrix on certain modules.  Moreover, in addition to the Young skyline filling model arising from the flip-and-reverse map, we detail several other constructions and interpretations of the Young key polynomials and Young atoms, including divided difference operators, crystal graphs, and compatible sequences.  

In Section~\ref{Sec:others} we provide a new formula for the expansion of a key polynomial into fundamental slide polynomials as well as a new combinatorial construction of the \emph{fundamental particle} basis for polynomials \cite{Sea20} in terms of \emph{flag-compatible sequences}.  We describe Young analogues for additional families of polynomials, classify which of these Young bases expand positively in one another, and explain different behaviour exhibited by Young and reverse versions including stable limits and embedding into larger polynomial rings. 
We also completely determine the intersection of the Young and reverse versions of all bases we consider. As a result, we find that when the Young and reverse versions of such a basis of $\Poly_n$ extend a given basis of $\Sym_n$ or $\QSym_n$, the intersection of the Young and reverse basis of $\Poly_n$ is exactly the original basis. For example, we show that the intersection of the Young key polynomials and the key polynomials is exactly the Schur polynomials, and the intersection of the fundamental slide and Young fundamental slide polynomials is exactly the fundamental quasisymmetric polynomials.

Finally in Section~\ref{Sec:Schubert}, we introduce a Young analogue of the famous Schubert polynomials, extending this perspective further. We describe how to generate the Young Schubert polynomials using pipe dreams and divided difference operators and detail how Young Schubert polynomials expand into Young key polynomials.  Interestingly, unlike the case for Young analogues of other polynomial bases, there is no basis of $\Poly_n$ consisting of Young Schubert polynomials.  We also describe the crystal graph structure for Young Schubert polynomials (analogous to the crystal graph structure for Young key polynomials), as Demazure subcrystals of the crystal on \emph{reduced factorizations} introduced in \cite{MorSch16}, using methods that were developed on a flipped and reversed version of this crystal in \cite{AssSch18}.

\section{Background}\label{sec:background}

Throughout the following, we denote permutations in one-line notation and allow the transposition $s_i$ to act on the right by swapping the entries in the $i$th and $(i+1)$th positions.  For a weak composition $a$, let $\sort(a)$ denote the partition obtained by recording the entries of $a$ in weakly decreasing order. We refer to assignments of integers to diagrams of compositions as \emph{tableaux} and assignments of integers to diagrams of weak compositions as \emph{fillings}. For any tableau or filling $T$, the \emph{weight} $\wt(T)$ denotes the weak composition whose $i$th entry is the number of occurrences of $i$ in $T$.

\subsection{Quasisymmetric polynomials}~\label{sec:qsymintro}

Let $\alpha$ be a composition with at most $n$ parts. The \emph{fundamental quasisymmetric polynomial} $F_\alpha(x_1,\ldots , x_n)$ was originally introduced through the enumeration of $P$-partitions~\cite{Ges84}.   Although there are several different ways to generate the fundamental quasisymmetric polynomials, we describe them as generating functions for certain tableau-like objects which we call \emph{fundamental reverse composition tableaux} to align with other definitions to follow. Fundamental reverse composition tableaux are those reverse composition tableaux (i.e., entries decrease from left to right in each row) satisfying the additional condition that if $i<j$, then every entry in row $i$ is strictly smaller than every entry in row $j$. It is straightforward to check that this definition is equivalent to the definition of the fundamental quasisymmetric polynomials as generating functions of ribbon tableaux (see for example~\cite[Section 4.1]{Hua16}).

In this way, $F_\alpha(x_1, \ldots , x_n)$ is the sum of all monomials $x^{\wt(T)}$, where $T$ ranges over fundamental reverse composition tableaux of shape $\alpha$ and largest entry at most $n$.

\begin{ex}
We have $F_{13}(x_1,x_2,x_3) = x^{013}+x^{103}+x^{112} + x^{121}+x^{130}$, as witnessed by the following fundamental reverse composition tableaux.
\begin{displaymath}
 \begin{array}{c@{\hskip2\cellsize}c@{\hskip2\cellsize}c@{\hskip2\cellsize}c@{\hskip2\cellsize}c}
 \tableau{ 3 & 3 & 3 \\ 2 } &   \tableau{ 3 & 3 & 3 \\ 1 } &  \tableau{ 3 & 3 & 2 \\ 1 } &  \tableau{ 3 & 2 & 2 \\ 1 } & \tableau{ 2 & 2 & 2 \\ 1 } 
 \end{array}
\end{displaymath}
\end{ex}

The \emph{monomial quasisymmetric polynomial} $M_\alpha(x_1,\ldots , x_n)$ is the generating function of what we call monomial reverse composition tableaux, which are those fundamental reverse composition tableaux in which all entries in the same row are equal. 

\begin{ex}
We have $M_{13}(x_1,x_2,x_3) = x^{013}+x^{103}+x^{130}$, as witnessed by the following monomial reverse composition tableaux.
\begin{displaymath}
 \begin{array}{c@{\hskip2\cellsize}c@{\hskip2\cellsize}c@{\hskip2\cellsize}c@{\hskip2\cellsize}c}
 \tableau{ 3 & 3 & 3 \\ 2 } &   \tableau{ 3 & 3 & 3 \\ 1 } & \tableau{ 2 & 2 & 2 \\ 1 } 
 \end{array}
\end{displaymath}
\end{ex}

One may also define fundamental Young composition tableaux and monomial Young composition tableaux, by replacing the decreasing row condition with the corresponding increasing row condition in the definitions of fundamental (respectively, monomial) reverse composition tableaux. One could then define Young fundamental quasisymmetric polynomials and Young monomial quasisymmetric polynomials to be the generating functions of fundamental (respectively, monomial) Young composition tableaux. In this case, however, the polynomials remain the same.

\begin{proposition}\label{prop:YoungFisF}
The generating function of the fundamental Young composition tableaux of shape $\alpha$ is $F_\alpha(x_1, \ldots , x_n)$ and the generating function of the monomial Young composition tableaux of shape $\alpha$ is $M_\alpha(x_1, \ldots , x_n)$. 
\end{proposition}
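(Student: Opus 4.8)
The plan is to exhibit a weight-preserving bijection between fundamental reverse composition tableaux and fundamental Young composition tableaux of the same shape $\alpha$, and then to observe that the monomial case is essentially immediate. The key observation driving the argument is that in both the reverse and Young settings, the defining conditions decompose into a \emph{within-row} condition and a \emph{between-row} condition. Within a single row the entries are required to be weakly monotone (decreasing in the reverse case, increasing in the Young case), so the arrangement of entries in that row is uniquely determined by the multiset of values occupying it. The between-row condition---that every entry in row $i$ is strictly smaller than every entry in row $j$ whenever $i<j$---is equivalent to requiring $\max(\text{row } i) < \min(\text{row } j)$ for all $i<j$, and this depends only on the multisets of values in the rows, not on how those values are ordered within each row.

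First I would define a map $\phi$ sending a fundamental reverse composition tableau $T$ to the filling $\phi(T)$ obtained by rearranging the entries of each row of $T$ into weakly increasing order. Since the between-row condition depends only on the row multisets, which $\phi$ leaves unchanged, $\phi(T)$ satisfies the fundamental Young conditions and is thus a fundamental Young composition tableau of shape $\alpha$. The map $\phi$ manifestly preserves weight, as it only permutes entries within rows, so $\wt(\phi(T)) = \wt(T)$. Its inverse sorts each row into weakly decreasing order, so $\phi$ is a bijection. Summing $x^{\wt}$ over each side then yields the desired equality of generating functions, namely that the generating function of fundamental Young composition tableaux of shape $\alpha$ equals $F_\alpha(x_1,\ldots,x_n)$.

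For the monomial case I would simply note that the requirement that all entries in each row be equal renders the within-row monotonicity condition vacuous: a monomial reverse composition tableau and a monomial Young composition tableau are literally the same object, since each row is constant and the between-row condition is identical in both settings. Hence these two families coincide on the nose, and their generating functions are trivially equal to $M_\alpha(x_1,\ldots,x_n)$.

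I do not anticipate any serious obstacle here; the proof is short and structural rather than computational. The only point requiring genuine care is the verification that the between-row condition is insensitive to the order of entries within each row, as this is precisely what guarantees that $\phi$ is well-defined (and that its inverse lands back among fundamental reverse composition tableaux). Everything else---weight preservation and bijectivity---is then routine.
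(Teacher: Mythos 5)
Your proof is correct and follows essentially the same route as the paper: the paper's map "reverse the entries of every row" coincides with your row-sorting map $\phi$ (since each row is weakly monotone), justified by the same observation that the strict between-row condition depends only on the row multisets, and the monomial case is handled identically by noting the two families literally coincide.
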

\begin{proof}
By definition, the monomial reverse composition tableaux are exactly the monomial Young composition tableaux. Since every entry in any row of a fundamental reverse composition tableaux is strictly smaller than any entry in the row above, reversing the entries of every row is a weight-preserving bijection between fundamental reverse composition tableaux and fundamental Young composition tableaux of the same shape.
\end{proof}

We turn our attention to the quasisymmetric Schur polynomials $\qs_\alpha$ and the Young quasisymmetric Schur polynomials $\yqs_\alpha$, where we will see a distinction between the reverse and the Young models. To define quasisymmetric Schur polynomials, we first define \emph{triples} in reverse composition tableaux. These are collections of three boxes in $D(\alpha)$ with two adjacent in a row and either (Type A) the third box above the right box with the lower row weakly longer, or (Type B) the third box below the left box with the higher row strictly longer.  A triple of either type is said to be an \emph{inversion triple} if it is not the case that $z\ge y\ge x$. 

\begin{figure}[ht]
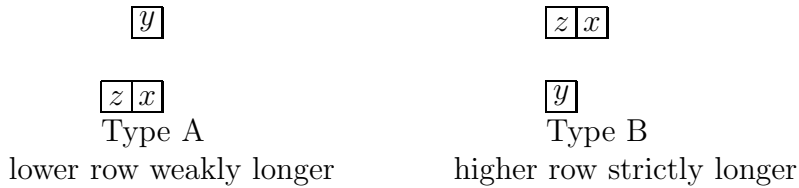

  \begin{displaymath}
    \begin{array}{l}
        \tableau{ & y } \\ \\ \tableau{  z & x } \\  \mbox{Type A} \\ \hspace{-3\cellsize} \mbox{lower row weakly longer}
    \end{array}
    \hspace{3\cellsize}
    \begin{array}{l}
   \hspace{3\cellsize}   \tableau{ z & x } \\ \\ \hspace{3\cellsize} \tableau{ y & } \\ \hspace{3\cellsize}\mbox{Type B} \\  \mbox{higher row strictly longer}
    \end{array}
  \end{displaymath}
  \caption{Triples for reverse composition tableaux.}\label{fig:reversetriples}
\end{figure}

Define the \emph{semistandard reverse composition tableaux} $\RCT(\alpha)$ for $\alpha$ to be the fillings of $D(\alpha)$ satisfying the following conditions.
\begin{enumerate}
\item Entries in each row weakly decrease from left to right.
\item Entries strictly increase from bottom to top in the first column.
\item All type A and type B triples are inversion triples.
\end{enumerate}
Then $\qs_\alpha(x_1, \ldots , x_n)$ is the generating function of $\RCT(\alpha)$ \cite{HLMvW09}.

\begin{ex}\label{ex:qs}
We have $\qs_{13}(x_1,x_2,x_3) = x^{013} + x^{022} + 2x^{112} + x^{103} + x^{202} + x^{121} + x^{211}+x^{130}+x^{220}$, as witnessed by the semistandard reverse composition tableaux in Figure~\ref{fig:QS13}.

\begin{figure}[ht]
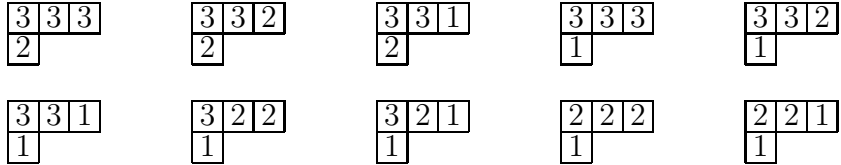

\begin{displaymath}
 \begin{array}{c@{\hskip3\cellsize}c@{\hskip3\cellsize}c@{\hskip3\cellsize}c@{\hskip3\cellsize}c@{\hskip3\cellsize}c@{\hskip3\cellsize}c}
 \tableau{ 3 & 3 & 3 \\ 2 } &  \tableau{ 3 & 3 & 2 \\  2 } &  \tableau{ 3 & 3 & 1 \\ 2 } &  \tableau{ 3 & 3 & 3 \\ 1 } &  \tableau{ 3 & 3 & 2 \\ 1 }   \\ \\  \tableau{ 3 & 3 & 1 \\ 1 } &  \tableau{ 3 & 2 & 2 \\ 1 } &   \tableau{ 3 & 2 & 1 \\ 1 }  &   \tableau{ 2 & 2 & 2 \\ 1 }  &   \tableau{ 2 & 2 & 1 \\ 1 } 
 \end{array}
\end{displaymath}
\caption{The ten elements of $\RCT(13)$ with entries at most $3$.}\label{fig:QS13} 
\end{figure}
\end{ex}

A \emph{Young triple} is a collection of three boxes with two adjacent in a row such that either (Type I) the third box is below the right box and the higher row is weakly longer, or (Type II) the third box is above the left box and the lower row is strictly longer (Figure~\ref{fig:Youngtriples}).  A Young triple of either type is said to be a \emph{Young inversion triple} if it is not the case that $x\ge y\ge z$.

\begin{figure}[ht]
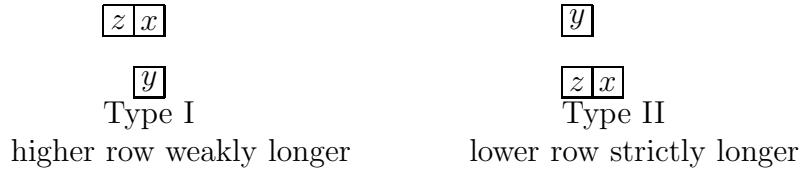

  \begin{displaymath}
    \begin{array}{l}
        \tableau{ z & x \\ & & \\ & y} \\  \mbox{Type I} \\ \hspace{-3\cellsize} \mbox{higher row weakly longer}
    \end{array}
    \hspace{3\cellsize}
    \begin{array}{l}
   \hspace{3\cellsize}  \tableau{ y \\ & & \\ z & x} \\ \hspace{3\cellsize}\mbox{Type II} \\  \mbox{lower row strictly longer}
    \end{array}
  \end{displaymath}
  \caption{Young triples for Young composition tableaux.}\label{fig:Youngtriples}
\end{figure}

Define the \emph{Young semistandard reverse composition tableaux} $\YCT(\alpha)$ for $\alpha$ to be the fillings of $D(\alpha)$ satisfying the following conditions.
\begin{enumerate}
\item Entries in each row weakly increase from left to right.
\item Entries strictly increase from bottom to top in the first column.
\item All type I and type II Young triples are Young inversion triples.
\end{enumerate}
Then the Young quasisymmetric Schur polynomial $\yqs_\alpha(x_1, \ldots , x_n)$ is the generating function of $\YCT(\alpha)$ \cite{LMvWbook}.

\begin{remark}
Young quasisymmetric Schur polynomials are most often defined in terms of a single triple condition; e.g \cite{LMvWbook}, \cite{AHM18}. While this is more compact, it does not extend appropriately to define a Young analogue of key polynomials.  The proof that these definitions are equivalent is analogous to the corresponding proof for reverse composition tableaux given in~\cite{HLMvW09}.
\end{remark}

\begin{ex}
We have $\yqs_{13}(x_1,x_2,x_3) = x^{130} + x^{121} + x^{112} + x^{103} + x^{013}$, as witnessed by the semistandard Young composition tableaux in Figure~\ref{fig:YQS103}. 

\begin{figure}[ht]
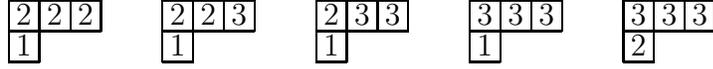

\begin{displaymath}
 \begin{array}{c@{\hskip2\cellsize}c@{\hskip2\cellsize}c@{\hskip2\cellsize}c@{\hskip2\cellsize}c@{\hskip2\cellsize}c@{\hskip2\cellsize}c}
   \tableau{ 2 & 2 & 2 \\ 1 } &     \tableau{ 2 & 2 & 3 \\ 1 } &    \tableau{ 2 & 3 & 3 \\ 1 } &    \tableau{ 3 & 3 & 3 \\ 1 } &    \tableau{ 3 & 3 & 3 \\ 2 }  
 \end{array}
\end{displaymath}
\caption{The five elements of $\YCT(13)$ with entries at most $3$.}\label{fig:YQS103} 
\end{figure}
\end{ex}

Notice that $\yqs_{13}(x_1,x_2,x_3) \neq \qs_{13}(x_1,x_2,x_3)$; indeed, they have a different number of terms. However, quasisymmetric Schur and Young quasisymmetric Schur polynomials are related by the following formula. 

\begin{proposition}\label{prop:yqstoqs}~\cite{LMvWbook}
Let $\alpha$ be a composition with at most $n$ parts. Then
\[\yqs_\alpha(x_1,\ldots , x_n) = \qs_{\rev(\alpha)}(x_n,\ldots , x_1).\]
\end{proposition}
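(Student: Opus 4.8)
The plan is to exhibit an explicit weight-reversing bijection between the tableau sets $\YCT(\alpha)$ and $\RCT(\rev(\alpha))$ that underlie the two generating functions, built from the flip-and-reverse map described in the introduction. Concretely, given a tableau $T \in \YCT(\alpha)$ with entries at most $n$, I would define $\Phi(T)$ by reversing the order of the rows (so that row $i$ of the shape $\alpha$ becomes a row of the shape $\rev(\alpha)$) and simultaneously replacing every entry $k$ by $n+1-k$. The target shape is $D(\rev(\alpha))$, and since flipping entries sends the interval $\{1,\dots,n\}$ to itself, $\Phi(T)$ is again a filling with entries at most $n$. On the level of weights, replacing $k$ by $n+1-k$ sends $\wt(T) = (c_1,\dots,c_n)$ to $(c_n,\dots,c_1)$; that is, $x^{\wt(\Phi(T))}(x_1,\dots,x_n) = x^{\wt(T)}(x_n,\dots,x_1)$. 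Summing over all of $\YCT(\alpha)$ then yields exactly $\qs_{\rev(\alpha)}(x_n,\dots,x_1)$, provided $\Phi$ is a bijection onto $\RCT(\rev(\alpha))$.

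The key steps, in order, are to verify that $\Phi$ carries each defining condition of $\YCT(\alpha)$ to the corresponding defining condition of $\RCT(\rev(\alpha))$, and conversely. First, the row condition: the flip $k \mapsto n+1-k$ is order-reversing, so weakly increasing rows become weakly decreasing rows, matching condition (1) of $\RCT$. Second, the first-column condition: reversing the row order turns the bottom-to-top strict increase of the first column of $T$ into a bottom-to-top strict \emph{decrease} after reordering, but the flip then restores a strict increase, so condition (2) is preserved. Third, and most delicately, the triple conditions: I would check that the row-reversal combined with the flip transforms each Type~I (respectively Type~II) Young triple of $\alpha$ into a Type~A (respectively Type~B) triple of $\rev(\alpha)$, and that the Young inversion condition $\neg(x \ge y \ge z)$ becomes the inversion condition $\neg(z \ge y \ge x)$ under the entry flip. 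This requires tracking how the relative positions of the three boxes and the ``weakly/strictly longer row'' requirements are exchanged when the rows are reversed, since $\rev$ sends the longer-above relations to longer-below relations.

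The main obstacle I expect is the bookkeeping in this third step: one must confirm that the geometric reversal of rows interchanges the two triple types correctly and that the accompanying ``weakly longer'' versus ``strictly longer'' conditions transform as claimed, because a Type~I triple in the Young setting demands the higher row be weakly longer while a Type~A triple in the reverse setting demands the lower row be weakly longer, and these must be matched through the row reversal. Once the type correspondence is pinned down, the inversion conditions match immediately because flipping entries reverses all inequalities, turning $x \ge y \ge z$ into $z \ge y \ge x$. Finally, since $\Phi$ is visibly an involution up to the shape change $\alpha \leftrightarrow \rev(\alpha)$ (applying flip-and-reverse twice returns the original tableau), it is its own inverse as a map between the two tableau sets, establishing the bijection and hence the identity of generating functions after the variable substitution $x_i \mapsto x_{n+1-i}$.
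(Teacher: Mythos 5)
Your proposal is correct and takes essentially the same approach as the paper: the remark immediately following Proposition~\ref{prop:yqstoqs} observes that the flip-and-reverse map (reversing the row order and exchanging entries $i \leftrightarrow n+1-i$) is a weight-reversing bijection between $\YCT(\alpha)$ and $\RCT(\rev(\alpha))$, which is precisely the map $\Phi$ you construct. Your detailed verification of the three conditions --- in particular that row reversal sends Type~I Young triples to Type~A triples and Type~II to Type~B, with the inversion conditions matching because the entry flip reverses all inequalities --- correctly fills in the bookkeeping that the paper leaves implicit.
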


\begin{remark}
As mentioned in the introduction, the \emph{flip-and-reverse} map on composition tableaux which reverses the order of the rows and exchanges entries $i \leftrightarrow (n+1-i)$ is a weight-reversing bijection between $\YCT(\alpha)$ and $\RCT(\rev(\alpha))$, implying Proposition~\ref{prop:yqstoqs}. In particular, reversing the order of the rows ensures the increasing first column condition is preserved.
\end{remark}

To illustrate this, we compute the Young quasisymmetric Schur polynomial $\yqs_{31}(x_1,x_2,x_3)$; compare this to the computation of $\qs_{13}(x_1,x_2,x_3)$ in Example~\ref{ex:qs}.

\begin{ex}
We have $\yqs_{23}(x_1,x_2,x_3) = x^{310} + x^{220} + 2x^{211} + x^{301} + x^{202} + x^{121} + x^{112} + x^{031}+x^{022}$, as witnessed by the semistandard Young composition tableaux in Figure~\ref{fig:YQS31}.
\begin{figure}[ht]
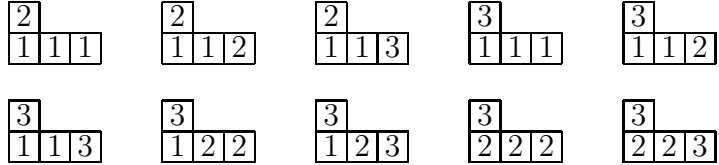

\begin{displaymath}
 \begin{array}{c@{\hskip2\cellsize}c@{\hskip2\cellsize}c@{\hskip2\cellsize}c@{\hskip2\cellsize}c@{\hskip2\cellsize}c@{\hskip2\cellsize}c}
 \tableau{ 2 \\ 1 & 1 & 1} &  \tableau{ 2 \\ 1 & 1 & 2  } &  \tableau{ 2 \\ 1 & 1 & 3 } &  \tableau{3 \\ 1 & 1 & 1 } &  \tableau{3 \\ 1 & 1 & 2 }   \\ \\  \tableau{3 \\ 1 & 1 & 3 } &  \tableau{ 3\\ 1 & 2 & 2 } &   \tableau{ 3\\ 1 & 2 & 3 }  &   \tableau{ 3 \\ 2 & 2 & 2 }  &   \tableau{ 3 \\ 2 & 2 & 3 } 
 \end{array}
\end{displaymath}
\caption{The ten elements of $\YCT(31)$ with entries at most $3$.}\label{fig:YQS31} 
\end{figure}
\end{ex}

Notice this involution preserves monomial and fundamental quasisymmetric polynomials: $M_{\alpha}(x_1,x_2, \hdots , x_n) = M_{\rev(\alpha)} (x_n, \hdots , x_2, x_1)$ and $F_{\alpha}(x_1,x_2, \hdots , x_n) = F_{\rev(\alpha)} (x_n, \hdots , x_2, x_1)$.

\begin{proposition}\label{prop:yqsqsexpansion}~\cite{HLMvW09,LMvWbook}
Quasisymmetric Schur and Young quasisymmetric Schur polynomials expand positively in the fundamental quasisymmetric basis, and
\[\yqs_\alpha(x_1,\ldots , x_n) = \sum_\beta c_\beta^\alpha F_\beta(x_1,\ldots , x_n)\]  
if and only if 
\[\qs_{\rev(\alpha)}(x_1,\ldots , x_n) = \sum_\beta c_\beta^\alpha F_{\rev(\beta)}(x_1,\ldots , x_n) .\]
\end{proposition}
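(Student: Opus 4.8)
The plan is to deduce the entire statement from Proposition~\ref{prop:yqstoqs} together with the fundamental reversal identity $F_\beta(x_1,\ldots,x_n) = F_{\rev(\beta)}(x_n,\ldots,x_1)$ recorded just above it, reducing everything to the already-established fundamental positivity of quasisymmetric Schur polynomials. First I would invoke \cite{HLMvW09}, where it is shown (by standardizing the elements of $\RCT(\rev(\alpha))$ and grouping them into descent classes) that $\qs_{\rev(\alpha)}$ expands with nonnegative integer coefficients in the fundamental basis; write $\qs_{\rev(\alpha)}(x_1,\ldots,x_n) = \sum_\beta c_\beta F_\beta(x_1,\ldots,x_n)$ with each $c_\beta \ge 0$. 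This supplies the positivity assertion for $\qs$, and positivity for $\yqs$ will fall out of the transfer below.

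Next I would apply the variable substitution $x_i \mapsto x_{n+1-i}$ to this expansion. On the left, Proposition~\ref{prop:yqstoqs} identifies $\qs_{\rev(\alpha)}(x_n,\ldots,x_1)$ with $\yqs_\alpha(x_1,\ldots,x_n)$. On the right, the reversal identity turns each $F_\beta(x_n,\ldots,x_1)$ into $F_{\rev(\beta)}(x_1,\ldots,x_n)$, giving $\yqs_\alpha(x_1,\ldots,x_n) = \sum_\beta c_\beta F_{\rev(\beta)}(x_1,\ldots,x_n)$. Reindexing by $\gamma = \rev(\beta)$ and setting $c_\gamma^\alpha := c_{\rev(\gamma)}$ then yields simultaneously $\yqs_\alpha(x_1,\ldots,x_n) = \sum_\gamma c_\gamma^\alpha F_\gamma(x_1,\ldots,x_n)$ and $\qs_{\rev(\alpha)}(x_1,\ldots,x_n) = \sum_\gamma c_\gamma^\alpha F_{\rev(\gamma)}(x_1,\ldots,x_n)$. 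Since the $c_\beta$ are nonnegative, so are the $c_\gamma^\alpha$, which establishes positivity of the $\yqs$ expansion.

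The asserted biconditional is then a consequence of uniqueness of coefficients. Because $\{F_\gamma\}$ is a basis of $\QSym_n$, the fundamental expansion of $\yqs_\alpha$ is unique; likewise $\{F_{\rev(\gamma)}\}$ is merely a relabelling of this basis, so the coefficient sequence in the second display is also unique. The computation above exhibits a single sequence $(c_\gamma^\alpha)$ realizing both expansions, so any sequence realizing the first must equal it and hence realizes the second, and conversely.

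I do not anticipate a genuine obstacle: the content is carried entirely by Proposition~\ref{prop:yqstoqs} and the reversal identity. The one point requiring care is bookkeeping, namely checking that the variable reversal $x_i \mapsto x_{n+1-i}$ composes correctly with the index reversal $\beta \mapsto \rev(\beta)$ in the fundamental identity, so that the two reversals cancel appropriately and the coefficients transfer with no spurious shift. Should a self-contained argument be preferred over citing positivity, one could instead reprove it directly by standardizing the elements of $\YCT(\alpha)$ and verifying that each standardization class contributes exactly one $F_\beta$, but this only duplicates the known tableau computation.
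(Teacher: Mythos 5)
Your proposal is correct and matches the paper's intent: the paper states this result as a citation to \cite{HLMvW09,LMvWbook}, having deliberately recorded the two ingredients you use --- Proposition~\ref{prop:yqstoqs} and the reversal identity $F_\beta(x_1,\ldots,x_n)=F_{\rev(\beta)}(x_n,\ldots,x_1)$ --- immediately beforehand, so your argument is exactly the transfer-of-expansion the paper has in mind, with the bookkeeping (substituting $x_i\mapsto x_{n+1-i}$, reindexing by $\gamma=\rev(\beta)$, and invoking uniqueness of coefficients in the basis $\{F_\gamma\}$ of $\QSym_n$) carried out explicitly. No gaps; the only point worth noting is that the sums should be understood as ranging over compositions with at most $n$ parts, so that $\{F_\beta\}$ and its relabelling $\{F_{\rev(\beta)}\}$ are genuinely bases and the uniqueness step applies.
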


For example, 
$\yqs_{31}(x_1,x_2,x_3) = F_{31}(x_1,x_2,x_3)+F_{22}(x_1,x_2,x_3)$, whereas 
$\qs_{13}(x_1,x_2,x_3) = F_{13}(x_1,x_2,x_3)+F_{22}(x_1,x_2,x_3).$

A remarkable property of the quasisymmetric Schur and Young quasisymmetric Schur polynomials is that they both positively refine Schur polynomials:

\begin{proposition}\label{prop:schurexpansion}\cite{LMvWbook}
\[s_\lambda(x_1,\ldots , x_n) = \sum_{\sort(\alpha) = \lambda}\qs_\alpha(x_1,\ldots , x_n) = \sum_{\sort(\alpha) = \lambda}\yqs_\alpha(x_1,\ldots , x_n)\]
\end{proposition}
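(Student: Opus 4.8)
The plan is to prove the ``reverse'' identity $s_\lambda(x_1,\ldots,x_n)=\sum_{\sort(\alpha)=\lambda}\qs_\alpha(x_1,\ldots,x_n)$ directly, and then to deduce the ``Young'' identity $s_\lambda=\sum_{\sort(\alpha)=\lambda}\yqs_\alpha$ from it using Proposition~\ref{prop:yqstoqs} together with the symmetry of $s_\lambda$. Granting the reverse identity, the Young identity follows by a change of variables and a reindexing:
\begin{align*}
\sum_{\sort(\alpha)=\lambda}\yqs_\alpha(x_1,\ldots,x_n)
&=\sum_{\sort(\alpha)=\lambda}\qs_{\rev(\alpha)}(x_n,\ldots,x_1)
=\sum_{\sort(\beta)=\lambda}\qs_{\beta}(x_n,\ldots,x_1)\\
&=s_\lambda(x_n,\ldots,x_1)=s_\lambda(x_1,\ldots,x_n).
\end{align*}
Here the first equality is Proposition~\ref{prop:yqstoqs}; the second uses that $\alpha\mapsto\rev(\alpha)$ is an involution preserving $\sort$, so it permutes the index set $\{\alpha:\sort(\alpha)=\lambda\}$; the third applies the reverse identity in the variables $x_n,\ldots,x_1$; and the last uses that $s_\lambda$ is symmetric. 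Thus the entire mathematical content is concentrated in the reverse identity, and the flip-and-reverse perspective of the paper lets the Young statement come along for free.

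For the reverse identity I would argue bijectively. Recall from the introduction that $s_\lambda(x_1,\ldots,x_n)=\sum_T x^{\wt(T)}$ as $T$ ranges over semistandard reverse tableaux of shape $\lambda$ with entries at most $n$, while by definition $\sum_{\sort(\alpha)=\lambda}\qs_\alpha=\sum_{\sort(\alpha)=\lambda}\sum_{C\in\RCT(\alpha)}x^{\wt(C)}$. So it suffices to construct a weight-preserving bijection between the semistandard reverse tableaux of shape $\lambda$ and the disjoint union $\bigsqcup_{\sort(\alpha)=\lambda}\RCT(\alpha)$. The natural candidate rearranges the cells of a reverse composition tableau into the straight shape $\lambda$, reading the columns and slotting entries into the partition diagram, with the inverse redistributing the rows of a straight-shape tableau among the admissible composition shapes. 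This is the bijection underlying~\cite{HLMvW09,LMvWbook}.

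The main obstacle is verifying that this map is well defined and invertible, which is exactly where the triple conditions enter. The type~A and type~B inversion-triple conditions do not force the columns of an element of $\RCT(\alpha)$ to be strictly increasing (a type~A triple is satisfied as soon as the upper entry is strictly smaller than the lower-right entry, but it is also satisfied when that entry is strictly larger), so a naive sort of the rows by length need not yield a valid reverse tableau, and conversely one must check that the entries of each straight-shape column can be distributed among the rows in exactly one triple-compatible way. I would tame this by first standardizing: using the descent-preserving standardization of reverse composition tableaux that underlies the fundamental expansion in Proposition~\ref{prop:yqsqsexpansion}, together with the classical expansion $s_\lambda=\sum_{Q}F_{\comp(\des(Q))}$ over standard Young tableaux $Q$ of shape $\lambda$, the problem reduces to a descent-preserving bijection between standard reverse composition tableaux of shapes sorting to $\lambda$ and standard Young tableaux of shape $\lambda$. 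Standard fillings are rigid, so the triple conditions pin down placements uniquely and both well-definedness and invertibility become tractable. Recovering the semistandard statement is then routine ``un-standardization,'' since matching fundamental expansions suffices: the fundamental polynomials $F_\beta(x_1,\ldots,x_n)$ over compositions $\beta$ with at most $n$ parts are linearly independent, so equality of the descent-graded counts forces equality of the polynomials.
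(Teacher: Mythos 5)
The paper offers no proof of this proposition to compare against: it is quoted verbatim from \cite{LMvWbook}, so the ``paper's own proof'' is a citation. Measured against that, your proposal actually does more than the paper. Your reduction of the Young identity to the reverse identity is correct and complete: Proposition~\ref{prop:yqstoqs}, the fact that $\alpha\mapsto\rev(\alpha)$ is a $\sort$-preserving involution on the index set, and the symmetry of $s_\lambda$ give exactly the chain of equalities you wrote. This is precisely the flip-and-reverse mechanism the paper itself emphasizes (cf.\ the remark following Proposition~\ref{prop:yqstoqs}), and it correctly isolates the reverse identity $s_\lambda=\sum_{\sort(\alpha)=\lambda}\qs_\alpha$ as the only real content.

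Your treatment of that reverse identity, however, is a plan rather than a proof. The weight-preserving bijection between semistandard reverse tableaux of shape $\lambda$ and $\bigsqcup_{\sort(\alpha)=\lambda}\RCT(\alpha)$ is named but never constructed, and the two claims doing all the work --- that standardization reduces the problem to standard fillings, and that ``standard fillings are rigid, so the triple conditions pin down placements uniquely'' --- are asserted, not established; those assertions are exactly the technical content of \cite{HLMvW09}. (Your parenthetical about type A triples is also slightly off: with $z$ left of $x$ in a row, so $z\ge x$, and $y$ above $x$, the triple is an inversion triple if and only if $y<x$ or $y>z$; being ``strictly larger'' must be measured against the lower-left entry $z$, not against $x$, since $x<y\le z$ is a non-inversion configuration.) So if the goal was a self-contained proof, there is a genuine gap: the bijection \emph{is} the theorem. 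If instead one is permitted to cite \cite{HLMvW09} or \cite{LMvWbook} for the reverse identity --- as the paper cites them for the whole proposition --- then your proposal is correct, and its added value is making explicit the derivation of the Young half from the reverse half, which the paper leaves implicit.
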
 

\begin{remark}
As noted in the introduction, Schur polynomials may be described in terms of either decreasing or increasing semistandard tableaux. Therefore Schur polynomials and ``Young Schur polynomials'' are the same (provided we consider a partition and its reversal to be the same), so from this perspective it makes sense that Schur polynomials expand positively into both the quasisymmetric Schur and Young quasisymmetric Schur bases. Similarly, the fact that both quasisymmetric Schur and Young quasisymmetric Schur polynomials expand positively in fundamental quasisymmetric polynomials (Proposition~\ref{prop:yqsqsexpansion}) makes sense due to the fact that fundamental quasisymmetric polynomials may also be described in terms of either increasing or decreasing tableaux (Proposition~\ref{prop:YoungFisF}), and thus are the same as ``Young fundamental quasisymmetric polynomials''.
\end{remark}

Typically a Young quasisymmetric Schur polynomial is not equal to any quasisymmetric Schur polynomial. However, we can classify their coincidences. We delay the proof to the appendix.

\begin{theorem}\label{thm:yqsqs}
$\yqs_\alpha(x_1, \ldots , x_n) = \qs_\beta(x_1, \ldots , x_n)$ if and only if $\alpha=\beta$ and either $\alpha$ has all parts the same, or all parts of $\alpha$ are $1$ or $2$, or $n=\ell(\alpha)$ and consecutive parts of $\alpha$ differ by at most $1$.
\end{theorem}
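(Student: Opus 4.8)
The plan is to compare the two weight generating functions directly: $\yqs_\alpha=\qs_\beta$ holds if and only if $\YCT(\alpha)$ and $\RCT(\beta)$ have identical weight multisets. Throughout I will also use the companion identity obtained by reversing variables. Writing $\rho$ for the operator $f(x_1,\dots,x_n)\mapsto f(x_n,\dots,x_1)$, the flip-and-reverse map gives $\rho(\yqs_\alpha)=\qs_{\rev\alpha}$ and $\rho(\qs_\beta)=\yqs_{\rev\beta}$ (Proposition~\ref{prop:yqstoqs}), so any identity $\yqs_\alpha=\qs_\beta$ is accompanied by $\qs_{\rev\alpha}=\yqs_{\rev\beta}$, which lets me exploit the Young/reverse asymmetry symmetrically.

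The main tool is a pair of extremal-monomial lemmas. Because the first column of a tableau in $\YCT(\alpha)$ is strictly increasing and its rows weakly increase, every entry of row $i$ is at least $i$; hence the dominance-largest weight occurring in $\yqs_\alpha$ is $(\alpha,0^{\,n-\ell(\alpha)})$, attained uniquely by the filling whose $i$th row is constant equal to $i$. Dually, in any tableau of $\RCT(\beta)$ every entry of row $i$ is at most $n-\ell(\beta)+i$, so the dominance-smallest weight occurring in $\qs_\beta$ is $(0^{\,n-\ell(\beta)},\beta)$, again attained uniquely. A third invariant comes from the same strict-first-column condition: every tableau of shape $\gamma$ uses at least $\ell(\gamma)$ distinct values, so the minimum number of nonzero parts among the weights of $\yqs_\alpha$ (resp.\ $\qs_\beta$) equals $\ell(\alpha)$ (resp.\ $\ell(\beta)$). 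Comparing this invariant for the common polynomial gives $\ell(\alpha)=\ell(\beta)$ immediately, and comparing the dominance-maximal monomials shows that $\alpha$ is exactly the content of the entrywise-minimal tableau of $\RCT(\beta)$; in particular $\alpha\trianglerighteq\beta$.

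For the backward direction I would treat the three families in turn. If all parts of $\alpha$ are equal then $\alpha$ is the only composition with $\sort(\alpha)$ equal to that rectangle, so by Proposition~\ref{prop:schurexpansion} both $\qs_\alpha$ and $\yqs_\alpha$ equal $s_{\sort(\alpha)}$ and the identity is immediate. If all parts are $1$ or $2$ then every row has length at most $2$, and I would produce an explicit weight-preserving bijection $\YCT(\alpha)\to\RCT(\alpha)$ by re-sorting the entries within each row, checking that the (few) triple conditions and the first-column condition transfer. If $n=\ell(\alpha)$, the strict first column is forced to be $1,2,\dots,n$, and the bound ``entries of row $i$ are at most $n-\ell(\alpha)+i=i$'' collides with the row conditions; here the hypothesis that consecutive parts differ by at most $1$ is exactly what prevents a longer row from sitting over a much shorter one, and a direct analysis of the resulting rigid fillings matches the two generating functions.

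The crux is the forward direction, and specifically what I will call the \emph{dumping} phenomenon. In $\RCT(\beta)$ a row that is strictly longer than the row beneath it can carry small entries in its overhanging cells, because those cells have no neighbour below to force an inversion triple; this is precisely how $\RCT(1,3)$ acquires the weight $(2,2,0)$ absent from $\YCT(1,3)$. The plan is to prove a \emph{dumping lemma}: the content of the entrywise-minimal tableau of $\RCT(\beta)$ equals $\beta$ (and, dually, the entrywise-maximal tableau of $\YCT(\beta)$ has content $\beta$) if and only if $\beta$ lies in one of the three families. Granting this, the forward direction closes as follows: the dominance-maximal monomial forces $\alpha$ to be the minimal-$\RCT$ content of $\beta$, while the dominance-minimal monomial forces $\beta$ to be the maximal-$\YCT$ content of $\alpha$; if $\beta$ is outside all three families these two contents cannot be reconciled (the $\beta=(1,3)$ computation is the model case, where the forced dominance-minimal weight $(0,1,3)$ of $\qs_\beta$ fails to match that of $\yqs_\alpha$), so no such $\alpha$ exists, and if $\beta$ is inside a family the dumping lemma gives $\alpha=\beta$. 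The hard part will be the dumping lemma: isolating, for a composition that fails all three conditions, a precise overhang configuration and turning it into an explicit weight that appears with different multiplicities in $\YCT$ and $\RCT$, uniformly across the various ways the conditions can fail (a part of size at least $3$ together with unequal parts, or a drop of size at least two between consecutive parts when $n=\ell(\alpha)$). Managing this case analysis, and verifying that the re-sorting bijection and the forced-first-column analysis of the backward direction really do preserve all triple conditions, is where essentially all of the work lies.
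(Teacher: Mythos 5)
Your backward direction is essentially the paper's own (Schur coincidence for rectangles, a row-resorting bijection for parts in $\{1,2\}$, rigidity when $n=\ell(\alpha)$), and your extremal-weight lemmas are correct as stated: the unique dominance-maximal weight of $\yqs_\alpha$ is $(\alpha,0^{n-\ell})$ and the unique dominance-minimal weight of $\qs_\beta$ is $(0^{n-\ell},\beta)$. The fatal problem is the forward direction: your ``dumping lemma'' is false, and with it the whole strategy. Take $\beta=(2,3)$ and $n=3$, which lies in none of the three families (it has a part equal to $3$, unequal parts, and $n>\ell(\beta)$). In $\RCT(2,3)$ no dumping occurs at the extreme: the candidate filling with bottom row $1\,1$ and top row $2\,2\,1$ is killed by the type B triple in columns $2,3$ (there $z=2\ge y=1\ge x=1$, a non-inversion), so the entrywise-minimal element has top row $2\,2\,2$ and content exactly $(2,3,0)$; dually the entrywise-maximal element of $\YCT(2,3)$ has rows $2\,2$ and $3\,3\,3$, content $(0,2,3)$. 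Thus for $\alpha=\beta=(2,3)$ both of your extremal monomials reconcile perfectly, your argument produces no contradiction, and it would in effect certify the equality $\yqs_{23}(x_1,x_2,x_3)=\qs_{23}(x_1,x_2,x_3)$.

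That equality is false, but it fails only at \emph{interior} weights, which no refinement of a dominance-extremes argument can detect. The filling of $D((2,3))$ with bottom row $1\,3$ and top row $2\,2\,2$ satisfies all the $\YCT$ conditions, so $x^{(1,3,1)}$ occurs in $\yqs_{23}$; but no element of $\RCT(2,3)$ has weight $(1,3,1)$: the single $3$ must head the top row, and both completions (top $3\,2\,2$ over bottom $2\,1$, or top $3\,2\,1$ over bottom $2\,2$) contain the type B non-inversion triple $z=3\ge y=2\ge x=2$. The weight $(1,3,1)$ lies strictly between your two extremes in dominance order. This is precisely the content of the paper's Lemma~\ref{lem:23}, and it is the missing idea in your plan: after Lemma~\ref{lem:offbytwo} (whose role your $(1,3)$-style extremal comparison can indeed play), the surviving compositions with $n>\ell$ still include every $\alpha$ whose consecutive parts differ by at most $1$, such as $(2,3)$, and cutting these down to rectangles and $\{1,2\}$-compositions requires constructing an interior weight using entries larger than $\ell(\alpha)$ --- available exactly because $n>\ell(\alpha)$ --- and showing it cannot be matched on the other side. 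Without an argument of this kind (and, more minorly, without proving that an ``entrywise-minimal'' tableau is even well defined), the forward direction of your proof does not go through.
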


\subsection{Key polynomials and Demazure atoms}

We now shift our attention to the ring $\Poly_n=\mathbb{Z}[x_1,\ldots , x_n]$ of all polynomials in $n$ variables. This ring possesses a variety of bases important in geometry and representation theory. A principal example is the basis of \emph{key polynomials}, which are characters of (type A) Demazure modules \cite{Dem74a, LasSch90, RS95} and which also arise as specializations of nonsymmetric Macdonald polynomials.  Closely related is the basis of \emph{Demazure atoms}, originally introduced as \emph{standard bases} in \cite{LasSch90}.  Demazure atoms were shown in~\cite{Mas09} to also be a specialization of nonsymmetric Macdonald polynomials.  They are equal to the smallest non-intersecting pieces of type $A$ Demazure characters and can be obtained through a truncated application of \emph{divided difference operators}. Intuitively, one can build the Demazure atoms by starting with a monomial and partially symmetrizing, keeping only the monomials not appearing in the previous iteration of this process.

\subsubsection{Semi-skyline fillings}
Both key polynomials and Demazure atoms are defined in terms of reverse fillings that are often referred to as semi-skyline fillings. To define the key polynomial corresponding to a weak composition $a$ of length $n$, first note that the definition of type A and B triples extends verbatim from composition diagrams to weak composition diagrams. We need to include a \emph{basement column}, an extra $0$th column in the diagram: for our purposes the basement entry of row $i$ is $n+1-i$. Basement entries do not contribute to the weight of a filling. Define the \emph{key fillings} $\KSSF(a)$ for $a$ to be the fillings of $D(\rev(a))$ (note the reversal) satisfying the following conditions.
\begin{enumerate}
\item Entries in each row, including basement entries, weakly decrease from left to right.
\item Entries do not repeat in any column.
\item All type A and type B triples, including triples containing basement entries, are inversion triples.
\end{enumerate}
 We use the following as definitional for key polynomials.

\begin{theorem}\label{thm:keydefinition}\cite{HHL08,Mas09},
Let $a$ be a weak composition of length $n$. Then 
\[\key_a = \sum_{T\in \KSSF(a)}x^{\wt(T)},\]
where only the non-basement entries contribute to the weight.
\end{theorem}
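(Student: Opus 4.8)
The plan is to prove the identity $\key_a = \sum_{T \in \KSSF(a)} x^{\wt(T)}$ by induction, matching the combinatorial generating function $K_a := \sum_{T \in \KSSF(a)} x^{\wt(T)}$ against the recursive definition of key polynomials via isobaric divided difference (Demazure) operators. Recall that $\key_a$ is characterized by $\key_a = x^a$ when $a$ is weakly decreasing, together with $\key_a = \pi_i\,\key_{s_i a}$ whenever $a_i < a_{i+1}$, where $\pi_i f = \frac{x_i f - x_{i+1}\, s_i f}{x_i - x_{i+1}}$. Since the $\pi_i$ satisfy the braid relations, this determines $\key_a$ independently of the chosen sequence of reductions to the dominant case, so it suffices to verify that $K_a$ obeys the same base case and the same recursion.

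For the base case, suppose $a$ is weakly decreasing, so that $\rev(a)$ is weakly increasing and $D(\rev(a))$ has rows that weakly lengthen from bottom to top. Here the basement $n+1-i$ in row $i$ decreases as one moves up, and I would argue that the decreasing-row condition, the column-distinctness condition, and the triple conditions cascade from the top (shortest basement, longest rows) downward to force a unique filling, namely the one whose row $i$ is constant with value $n+1-i$. A direct computation of its weight gives $\wt(T) = a$, so $K_a = x^a = \key_a$. (Concretely, for $n=2$ and $a=(2,1)$ the sole filling has bottom row $[\,2\,]$ and top row $[\,1\,1\,]$, of weight $(2,1)$.)

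For the inductive step, fix an ascent $a_i < a_{i+1}$ and set $b = s_i a$; since $b$ lies one step closer to its dominant rearrangement $\sort(a) = \sort(b)$ (one fewer inversion), induction applies and $K_b = \key_b$. It remains to show $K_a = \pi_i K_b$ combinatorially. The idea is to organize $\KSSF(a) \cup \KSSF(b)$ into \emph{strings} under the operation that toggles the entries $i$ and $i+1$: on each such string the operator $\pi_i$ should send the monomial of the $b$-filling at the bottom of the string to the sum of the monomials of all fillings in the string, which are exactly the elements of $\KSSF(a)$ in that string. Summing over strings then yields $\pi_i K_b = K_a$. Establishing this requires showing that the relevant value-swaps carry key fillings of shape $D(\rev(b))$ to key fillings of shape $D(\rev(a))$ and back, with the content in $\{i, i+1\}$ sweeping out precisely the $\pi_i$-string.

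The main obstacle is this last point: verifying that toggling $i \leftrightarrow i+1$ preserves \emph{all} of the defining conditions, in particular that no admissible type A or type B inversion triple (including those meeting the basement) is spuriously created or destroyed, and that column-distinctness is maintained, so that the strings partition the fillings correctly. This is precisely the technical core of the combinatorial formula of \cite{HHL08, Mas09}, and carrying it out amounts to checking the triple conditions locally around the rows carrying the swapped letters. An alternative route that sidesteps the explicit crystal bookkeeping is to invoke the Haglund--Haiman--Loehr combinatorial formula for nonsymmetric Macdonald polynomials $E_a(x; q, t)$ directly: specializing $E_a$ to the degenerate parameter values at which it collapses to $\key_a$ kills every term carrying a $q$- or $t$-factor, and the surviving fillings are precisely those satisfying the triple and column conditions defining $\KSSF(a)$ (after accounting for the present reversal-and-basement convention), each contributing $x^{\wt(T)}$ with coefficient $1$.
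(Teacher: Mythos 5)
First, a point of comparison: the paper does not prove this statement at all. Theorem~\ref{thm:keydefinition} is imported from \cite{HHL08,Mas09} and explicitly taken ``as definitional'' for key polynomials, so there is no internal proof to measure your attempt against; your argument must stand on its own as a proof of the cited result.

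As written, it does not. The scaffolding (base case for weakly decreasing $a$, then induction via $\key_a=\pi_i\,\key_{s_ia}$ at an ascent $a_i<a_{i+1}$) is a legitimate and standard strategy, and your base case, though argued loosely, is correct in content: the basement $n+1-i$, the row-decrease condition, and column-distinctness force the unique constant-row filling of weight $a$. The genuine gap is the inductive step, which you yourself flag as ``the main obstacle'' and then do not carry out. Note that the difficulty is worse than checking that a value-toggle preserves the axioms: since $\KSSF(a)$ and $\KSSF(b)$ consist of fillings of \emph{different} diagrams $D(\rev(a))$ and $D(\rev(b))$, the map underlying your strings must move boxes between rows, not just relabel entries $i\leftrightarrow i+1$, and it must do so while preserving column-distinctness and the type A/B triple conditions including basement triples. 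Constructing such a pairing on which $\pi_i$ acts string-by-string is precisely the content of the theorem; in the literature this is never done by a naive toggle, but either by Mason's insertion-based, weight-preserving bijection between skyline fillings and semistandard Young tableaux with bounded right key (so that the claim reduces to Theorem~\ref{thm:rightkey}), or by the Haglund--Haiman--Loehr formula for $E_a(x;q,t)$ together with the specialization identifying the degenerate limit with $\key_a$, or (much later) by an honest crystal structure on skyline fillings. Your fallback paragraph --- invoke the HHL formula and specialize --- is not an independent argument but a citation of exactly the results \cite{HHL08,Mas09} to which the theorem is attributed; and even granting those, the identification of the surviving fillings with $\KSSF(a)$ under this paper's conventions (reversed shape, decreasing basement) is itself the nontrivial verification you would still owe. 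In short: the right skeleton, but the load-bearing step is missing.
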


For example, we have $\key_{032} = x^{032} + x^{122} + x^{212} + x^{302} + x^{311} + x^{320} + x^{131} + x^{221} + x^{230}$, 
which is computed using the elements of $\KSSF(032)$ shown in Figure~\ref{fig:key032} below.
\begin{figure}[ht]
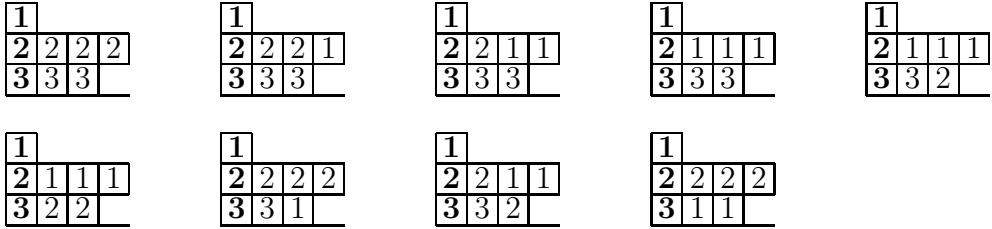

\begin{displaymath}
 \begin{array}{c@{\hskip3\cellsize}c@{\hskip3\cellsize}c@{\hskip3\cellsize}c@{\hskip3\cellsize}c@{\hskip3\cellsize}c@{\hskip3\cellsize}c}

  \tableau{  \bf{1}  \\   \bf{2} & 2 & 2 & 2  \\ \bf{3} & 3 & 3 \\ \hline }  &   \tableau{  \bf{1}  \\   \bf{2} & 2 & 2 & 1  \\ \bf{3} & 3 & 3 \\ \hline }  &    \tableau{  \bf{1}  \\   \bf{2} & 2 & 1 & 1  \\ \bf{3} & 3 & 3 \\ \hline }  &    \tableau{  \bf{1}  \\   \bf{2} & 1 & 1 & 1  \\ \bf{3} & 3 & 3 \\ \hline }  &    \tableau{  \bf{1}  \\   \bf{2} & 1 & 1 & 1  \\ \bf{3} & 3 & 2 \\ \hline }  \\ \\    \tableau{  \bf{1}  \\   \bf{2} & 1 & 1 & 1  \\ \bf{3} & 2 & 2 \\ \hline }  &    \tableau{  \bf{1}  \\   \bf{2} & 2 & 2 & 2  \\ \bf{3} & 3 & 1 \\ \hline }  &    \tableau{  \bf{1}  \\   \bf{2} & 2 & 1 & 1  \\ \bf{3} & 3 & 2 \\ \hline }  &    \tableau{  \bf{1}  \\   \bf{2} & 2 & 2 & 2  \\ \bf{3} & 1 & 1 \\ \hline }     
   \end{array}
\end{displaymath}
\caption{The 9 key fillings of shape $032$. (Basement entries in bold.)}\label{fig:key032} 
\end{figure}

The definition of the Demazure atoms in terms of semi-skyline fillings comes from specializing the diagram fillings used to generate the nonsymmetric Macdonald polynomials~\cite{HHL08}. Define the \emph{atom fillings} $\ASSF(a)$ for $a$ to be the fillings of $D(a)$ (no basement) satisfying the following conditions.
\begin{enumerate}
\item Entries weakly decrease from left to right in each row.
\item Entries do not repeat in any column.
\item The first entry of each row is equal to its row index. 
\item All type A and type B triples are inversion triples.
\end{enumerate}
We use the following as definitional for Demazure atoms.

\begin{theorem}\label{thm:atomdefinition}\cite{Mas09}
Let $a$ be a weak composition of length $n$. Then 
\[\atom_a = \sum_{T\in \ASSF(a)}x^{\wt(T)}.\]
\end{theorem}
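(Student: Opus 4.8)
The plan is to prove the identity by induction on the number of inversions separating $a$ from its sorted partition $\sort(a)$, matching the combinatorial generating function against the recursive (divided difference) definition of the Demazure atom. Write $A_a := \sum_{T\in\ASSF(a)} x^{\wt(T)}$ for the right-hand side. Recall the isobaric divided difference operator $\pi_i f = \frac{x_i f - x_{i+1}(s_i f)}{x_i - x_{i+1}}$ and its truncation $\overline{\pi}_i = \pi_i - \mathrm{id}$; the description of Demazure atoms given above (start from the dominant monomial and partially symmetrize, retaining only the newly produced monomials) is exactly the statement that $\atom_\lambda = x^\lambda$ for a partition $\lambda$ and $\atom_{s_i a} = \overline{\pi}_i\,\atom_a$ whenever $a_i > a_{i+1}$. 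It therefore suffices to show that $A_a$ obeys the same base case and the same recursion. For the base case, when $a=\lambda$ is weakly decreasing, the requirement that the first entry of row $i$ equal $i$, combined with weak decrease along rows and distinctness down columns, forces every cell of row $i$ to equal $i$ (arguing upward from the bottom row, each entry is at most its row index yet distinct from all smaller row indices already appearing below it in its column); hence $\ASSF(\lambda)$ is a single filling and $A_\lambda = x^\lambda = \atom_\lambda$.

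For the inductive step it suffices to establish the single operator identity
\[ \pi_i A_a = A_a + A_{s_i a} \qquad \text{whenever } a_i > a_{i+1}, \]
equivalently $A_{s_i a} = \overline{\pi}_i A_a$. Granting this, if $a$ is not dominant I choose $i$ with $a_i < a_{i+1}$ and set $b = s_i a$, so that $b_i > b_{i+1}$ and $b$ has strictly fewer sorting inversions than $a$. By induction $A_b = \atom_b$, and applying the identity with $b$ in place of $a$ gives $A_a = \overline{\pi}_i A_b = \overline{\pi}_i\,\atom_b = \atom_{s_i b} = \atom_a$, completing the induction.

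The combinatorial heart, and the step I expect to be the main obstacle, is the displayed operator identity. To prove it I would fix the positions and values of all cells whose entry is neither $i$ nor $i+1$, and study the polynomial in $x_i,x_{i+1}$ contributed by the remaining cells of a fixed shape. The row-decrease, column-distinctness, and (crucially) the type A/type B inversion-triple conditions should constrain these cells so that, within each such class, the admissible $\{i,i+1\}$-fillings realize precisely a ``staircase'' interval of monomials $x_i^r x_{i+1}^s,\ x_i^{r-1}x_{i+1}^{s+1},\ \ldots$, which is exactly the image of a single monomial under $\pi_i$. Matching the classes of shape $a$ against those of shape $s_i a$ (the latter supplying the extra summand $A_{s_i a}$) then reduces to a local, weight-tracking bijection acting only on the cells containing $i$ and $i+1$. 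The delicate points are verifying that the inversion-triple conditions pin down the correct endpoints of each staircase interval, that they are preserved under the local move, and that they interact correctly with the short or empty rows that appear when passing between the shapes $a$ and $s_i a$; this is where I anticipate the real work lies, and where careful case analysis on the triple types will be unavoidable.

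Finally, I note an alternative route, which is in fact the original argument of \cite{Mas09}: realize $\ASSF(a)$ as the $q=t=0$ specialization of the Haglund--Haiman--Loehr non-attacking fillings computing the nonsymmetric Macdonald polynomial $E_a(x;q,t)$, and invoke the known identification of $E_a(x;0,0)$ with the Demazure atom. This bypasses the explicit bijection of the previous paragraph at the cost of importing the Macdonald machinery; for a self-contained development alongside Theorem~\ref{thm:keydefinition}, I would prefer the recursive approach above.
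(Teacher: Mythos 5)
First, a point of comparison: the paper does not prove Theorem~\ref{thm:atomdefinition} at all. It is imported from \cite{Mas09} and explicitly used as \emph{definitional} for the Demazure atoms, so there is no internal proof to measure your argument against; your proposal must stand on its own. Judged that way, the scaffolding is correct but the proof has a genuine gap exactly where all of the content lies. Your base case is right (for weakly decreasing $a$, the first-column condition, weak decrease along rows, and column-distinctness force the unique filling with row $i$ constantly equal to $i$), and the induction on sorting distance correctly reduces the theorem to the single identity $\overline{\pi}_i A_a = A_{s_i a}$ for $a_i > a_{i+1}$, where $A_a = \sum_{T\in\ASSF(a)}x^{\wt(T)}$. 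But that identity \emph{is} the theorem: you do not prove it, you describe a strategy (staircase intervals of $\{i,i+1\}$-fillings, matched across the two shapes) and explicitly defer the verification that the triple conditions carve out such intervals and survive the local move. Moreover, the sketch understates difficulties specific to atom fillings: the analysis cannot be local to the cells containing $i$ and $i+1$, because the first entry of every row is pinned to its row index, passing from $D(a)$ to $D(s_i a)$ swaps the lengths of rows $i$ and $i+1$ (so cells migrate between rows with different anchors), and the type A/B triple conditions compare the $\{i,i+1\}$-cells against arbitrary other entries, so legality of an exchange depends on the cells you froze. Resolving these interactions is precisely the hard combinatorial work, and nothing in the proposal does it.

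Your closing paragraph is, in fact, the correct answer rather than an afterthought: the argument of \cite{Mas09} identifies $\sum_{T\in\ASSF(a)}x^{\wt(T)}$ with the $q=t=0$ specialization of the Haglund--Haiman--Loehr formula for the nonsymmetric Macdonald polynomial $E_a$, and then invokes the known divided-difference recursion for that specialization; since the present paper simply cites this result, that route is the one that actually exists in the literature. A self-contained combinatorial proof of $\overline{\pi}_i A_a = A_{s_i a}$ along the lines you outline would be a worthwhile (and nontrivial) piece of work, but as written your inductive approach is an outline with its central step missing, not a proof.
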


\subsubsection{Left and right keys}

The eponymous formula for the key polynomial $\key_a$ is given in terms of right keys. A \emph{semistandard Young tableau} (or $\SSYT$) $T$ is a tableau of partition shape such that entries weakly increase along rows and strictly increase up columns. For a partition $\lambda$, let $\SSYT(\lambda)$ denote the set of all $\SSYT$ of shape $\lambda$, and $\SSYT_n(\lambda)$ the subset of $\SSYT(\lambda)$ whose entries are at most $n$. 
 A semistandard Young tableau $T$ is a \emph{key} if the entries appearing in the $(i+1)$th column of $T$ are a subset of the entries appearing in the $i^{th}$ column of $T$, for all $i$. For a weak composition $a$, define $\keytab(a)$ to be the unique key of weight $a$. For any semistandard Young tableau $T$, there are two keys of the same shape as $T$ associated to $T$, called the \emph{right key} of $T$, denoted $K_+(T)$, and the \emph{left key} of $T$, denoted $K_-(T)$
 
We now describe procedures for computing right and left keys, which will be illustrated in Example~\ref{ex:leftrightkey} below. There are several different methods for computing keys (see, for example~\cite{Mas09},~\cite{Wil13}) but we use the classical method presented in \cite{RS95} as it involves several tools we will need later.  
Two words ${\bf b}$ and ${\bf c}$ in $\{1,2,\ldots n\}$ are said to be \emph{Knuth-equivalent}, written ${\bf b}\sim {\bf c}$, if one can be obtained from the other by a series of the following local moves:
\begin{align*}
{\bf d}xzy{\bf e} \sim {\bf d}zxy{\bf e} & \quad \mbox{ for } \quad x\le y < z \\ 
{\bf d}yxz{\bf e} \sim {\bf d}yzx{\bf e} & \quad \mbox{ for } \quad  x< y \le z 
\end{align*}
for words ${\bf d}$ and ${\bf e}$ and letters $x,y,z$. 

Define the \emph{column word factorization} of a word $v$ to be the decomposition of $v$ into subwords $v=v^{(1)} v^{(2)} \cdots$ by starting a new subword between every weak ascent.  Then the \emph{column form} of $v$ (denoted $\colform(v)$) is the composition whose parts are the lengths of the subwords appearing in the column word factorization.  Let $\lambda$ be the shape of the $\SSYT$ obtained when Schensted insertion (see, e.g.,~\cite{Ful97,Sag13,Sta99}) is applied to $v$.  The word $v$ is said to be \emph{column-frank} if $\colform(v)$ is a rearrangement of the nonzero parts of $\lambda'$, where $\lambda'$ denotes the conjugate shape of $\lambda$ obtained by reflecting the diagram of $\lambda$ across the line $y=x$.  Let $T\in \SSYT(\lambda)$.  Then the right key (resp. left key) of $T$ is the key of shape $\lambda$ whose $j^{th}$ column is equal to the last (resp. first) subword in any column-frank word which is Knuth equivalent to the \emph{column word} $\col(T)$ of $T$ (obtained by reading the entries of $T$ down columns from left to right) and whose last (resp. first) subword has length $\lambda_j'$.

Notice the difference in the construction of left and right keys.  The weight of the left key is usually not a reversal of the weight of the right key; the subtle connection between left and right keys is explored in Section~\ref{sec:leftkeys}, wherein we also define polynomials naturally associated to left keys.

\begin{theorem}\label{thm:rightkey}~\cite{LasSch90,RS95}
Let $a$ be a weak composition of length $n$. Then
$$\key_{a} = \sum_{\substack{ T \in \SSYT_n(\sort(a)) \\ K_+(T) \le \keytab(a)}} x^{\wt(T)} ,$$

where $K_+(T)\le \keytab(a)$ if each entry of $K_+(T)$ is weakly smaller than the corresponding entry of $\keytab(a)$.
\end{theorem}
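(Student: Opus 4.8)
The plan is to start from the semi-skyline description of $\key_a$ provided by Theorem~\ref{thm:keydefinition}, so that it suffices to produce a weight-preserving bijection
\[
\Phi : \KSSF(a) \longrightarrow \{\, T \in \SSYT_n(\sort(a)) : K_+(T) \le \keytab(a)\,\}.
\]
The natural candidate for $\Phi$ is the insertion bijection of Mason~\cite{Mas09}: given a key filling $F$, I would delete the basement column, read the remaining entries by columns to form a word, and apply Schensted insertion, setting $\Phi(F)=P$ equal to the resulting insertion tableau. I would first verify that $P$ is a genuine semistandard Young tableau of shape $\sort(a)$ with entries at most $n$: the entries are bounded by $n$ since the non-basement entries of $F$ are, and the shape is $\sort(a)$ because the column and inversion-triple conditions defining $\KSSF(a)$ force the reading word of $F$ to insert to the sorted shape of $\rev(a)$, which equals $\sort(a)$. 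Weight preservation is then immediate, since Schensted insertion preserves content and the basement entries are discarded.

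The substance of the argument is to match the image of $\Phi$ with the right-key condition, and this is the step I expect to be the main obstacle. Here I would invoke the column-frank word characterization of the right key from~\cite{RS95} recalled above: $K_+(P)$ is read off the terminal subwords of the column-frank words Knuth-equivalent to $\col(P)$. Since the reading word of $F$ is by construction Knuth-equivalent to $\col(P)$, the task becomes translating the structural constraints on $F$ --- the weakly decreasing rows, the non-repeating columns, and especially the inversion-triple conditions together with the prescribed basement entry $n+1-i$ in row $i$ --- into an explicit column-frank word whose terminal subwords bound the right key. The role of the full basement $n, n-1, \dots, 1$ is precisely to impose the inequality $K_+(P) \le \keytab(a)$ rather than an equality: the basement entry in each row furnishes the sharp upper bound on the corresponding right-key entry, and I would show that a filling of $D(\rev(a))$ satisfying conditions (1)--(3) of $\KSSF(a)$ exists if and only if the associated right key does not exceed $\keytab(a)$ coordinatewise.

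Finally I would establish that $\Phi$ is a bijection onto this set. Injectivity follows from the invertibility of Mason's insertion procedure, which reconstructs the unique skyline filling from its insertion tableau together with its recording data; this data is pinned down here because the shape $\rev(a)$ and the basement are fixed. For surjectivity, given any $T \in \SSYT_n(\sort(a))$ with $K_+(T) \le \keytab(a)$, I would run the inverse insertion to obtain a filling of $D(\rev(a))$ and check, again via the column-frank description, that the right-key bound guarantees the resulting filling lies in $\KSSF(a)$. The crux throughout is the dictionary between the local triple conditions on skyline fillings and the global Knuth-theoretic computation of right keys; once that is in place, weight preservation and the enumeration of terms follow routinely, and comparison with Theorem~\ref{thm:keydefinition} completes the proof.
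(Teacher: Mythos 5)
A preliminary remark: the paper does not prove Theorem~\ref{thm:rightkey} at all --- it is imported from \cite{LasSch90,RS95} and used as a known result --- so there is no internal proof to measure your attempt against. Your strategy, namely reducing to the skyline model of Theorem~\ref{thm:keydefinition} and constructing a weight-preserving bijection by Schensted-inserting column reading words, is the natural modern route; it is essentially how Mason \cite{Mas09} reconciles the skyline and right-key pictures. So the plan is sound and, in principle, completable.

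As written, however, the proposal has a genuine gap: its central step is announced rather than carried out. The sentence ``I would show that a filling of $D(\rev(a))$ satisfying conditions (1)--(3) of $\KSSF(a)$ exists if and only if the associated right key does not exceed $\keytab(a)$ coordinatewise'' is, modulo bookkeeping, a restatement of the theorem itself; the ``dictionary between the local triple conditions and the global Knuth-theoretic computation of right keys'' that you defer is exactly where all the mathematical content lives. Concretely, closing the gap requires: (a) the fact that the column reading word of an element of $\KSSF(a)$ inserts to a tableau of shape $\sort(a)$ --- a theorem of Mason, not an immediate consequence of the triple conditions, which you assert without argument; (b) Mason's bijection between the atom fillings $\ASSF(b)$ and the tableaux with $K_+(T)=\keytab(b)$ exactly, i.e., the equality statement (\ref{atomrtkey}); (c) a filling-level decomposition $\KSSF(a) \leftrightarrow \bigsqcup_{b\le a}\ASSF(b)$, the filling-level version of (\ref{keysintoatoms}); and (d) the order-theoretic fact that $K_+(T)\le \keytab(a)$ entrywise if and only if $K_+(T)=\keytab(b)$ for some $b\le a$ in the Bruhat order. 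Items (c) and (d) are what convert the equality version into the inequality version of the theorem; your remark that ``the basement imposes the inequality'' is the right intuition but is never substantiated. Relatedly, your injectivity justification is not quite right: the inverse is not pinned down ``because the shape $\rev(a)$ and the basement are fixed,'' since distinct fillings in $\KSSF(a)$ correspond to distinct atoms $b\le a$; what makes the map invertible is that $b$ can be recovered from $K_+(P)$ --- which again depends on the very dictionary that is missing.
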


\begin{ex}\label{ex:leftrightkey}
Let $a=032$. Then $\keytab(a) =   \tableau{  3 & 3  \\  2 & 2 & 2 }$, which is a tableau of shape $\lambda = 32$. The nine tableaux whose right keys are smaller than or equal to $\keytab(a)$ are 
\[
\begin{array}{c@{\hskip1.5\cellsize}c@{\hskip1.5\cellsize}c@{\hskip1.5\cellsize}c@{\hskip1.5\cellsize}c@{\hskip1.5\cellsize}c@{\hskip1.5\cellsize}c@{\hskip1.5\cellsize}c@{\hskip1.5\cellsize}c@{\hskip1.5\cellsize}c@{\hskip1.5\cellsize}c}

   \tableau{  3 & 3  \\  2 & 2 & 2 }  &  \tableau{  3 & 3  \\  1 & 2 & 2 } &   \tableau{  3 & 3  \\  1 & 1 & 2 }  &   \tableau{  3 & 3  \\  1 & 1 & 1 }  &    \tableau{  2 & 3  \\  1 & 1 & 1 } &  
  
   \tableau{  2 & 2  \\  1 & 1 & 1 } &    \tableau{  2 & 2  \\  1 & 1 & 2 } &  \tableau{  2 & 3  \\ 1 & 1 & 2 }  &  \tableau{  2 & 3  \\  1 & 2 & 2 } 
  \end{array}
\]
To illustrate the process of finding right (and left) keys, let $T$ be the last tableau in the list above. Then $\col(T) = 21322$. The words that are Knuth-equivalent to $21322$ (listed with vertical bars indicating the column word factorizations) are $\{21|32|2, \; 21|2|32, \; 2|21|32, \; 2|2|31|2\}$. The column form of the first three words is a rearrangement of $221$, the shape of $\lambda'$, so these three words are column-frank. The fourth is not column-frank so we ignore it. Looking at the rightmost subword in each column-frank word, the first of these words tells us that the column of $K_+(T)$ of length $1$ consists of a single $2$, and the second (or third) word tells us that the columns of $K_+(T)$ of length $2$ each contain a $2$ and a $3$. 

Thus $K_+(T) = \tableau{  3 & 3  \\  2 & 2 & 2 }$. Similarly, via leftmost subwords, we obtain  $K_-(T) = \tableau{  2 & 2  \\  1 & 1 & 2 }$.
\end{ex}

One may also use right keys to define the Demazure atoms.  Given a weak composition $a$ of length $n$, the Demazure atom $\atom_a$ can also be given by \begin{align}\label{atomrtkey} \atom_{a} = \sum_{\substack{ T \in \SSYT_n(\lambda(a)) \\ K_+(T) = \keytab(a)}} x^{\wt(T)}. \end{align}
 
From this construction and Theorem~\ref{thm:rightkey}, it is apparent that key polynomials expand positively in Demazure atoms.  In particular, 
\begin{align}\label{keysintoatoms} \key_a = \sum_{b \le a} \atom_b,\end{align} 
where $b \le a$ if and only if $\sort(b)=\sort(a)$ and the permutation $w$ such that $w(\sort(b))=b$ is less than or equal to the permutation $v$ such that $v(\sort(a))=a$ in the Bruhat order. 

\subsubsection{Divided differences and crystal graphs}{\label{sec:keycrystal}}

Key polynomials can be defined in terms of \emph{divided difference operators}. Given a positive integer $i$, where $1\le i <n$, define an operator $\partial_i$ on $\mathbb{Z}[x_1,\ldots , x_n]$ by 
\[\partial_i(f) =  \frac{f-s_i(f)}{x_i-x_{i+1}}\]
where $s_i$ exchanges $x_i$ and $x_{i+1}$. Now define another operator $\pi_i$ on $\mathbb{Z}[x_1,\ldots , x_n]$ by 
\[\pi_i(f) = \partial_i(x_if).\]
For a permutation $w$, define $\pi_w = \pi_{i_1}\cdots \pi_{i_r}$, where $s_{i_1}\cdots s_{i_r}$ is any reduced word for $w$. (This definition is independent of the choice of reduced word because the $\pi_i$ satisfy the commutation and braid relations for the symmetric group.)  Recall that $\sort(a)$ is the rearrangement of the entries of $a$ into decreasing order.  For a weak composition $a$ let $w_a$ be the minimal length permutation that sends $a$ to $\sort(a)$ acting on the right.  Then the key polynomial is given by
\[\key_a = \pi_{w_a}x^{\sort(a)}.\]

\begin{ex}
Let $a=032$. Then the minimal length permutation taking $a$ to $\sort(a) = 320$ is $s_1 s_2$. We compute
\begin{align*}
\pi_1\pi_2 (x_1^3x_2^2) & =\pi_1 \frac{x_1^3x_2^3 - x_1^3x_3^3}{x_2-x_3} \\
                                              & = \pi_1(x_1^3x_2^2+x_1^3x_2x_3+x_1^3x_3^2) \\
                                              & = \frac{(x_1^4x_2^2 - x_1^2x_2^4) + (x_1^4x_2x_3 - x_1x_2^4x_3) +(x_1^4x_3^2-x_2^4x_3^2)}{x_1-x_2} \\
                                              & = x_1^3x_2^2+x_1^2x_2^3+x_1^3x_2x_3+x_1^2x_2^2x_3+x_1x_2^3x_3+x_1^3x_3^2+x_1^2x_2x_3^2+x_1x_2^2x_3^2+x_2^3x_3^2 \\
                                              & = \key_{032}.                                             
\end{align*}
\end{ex}

Demazure atoms can also be described in terms of divided difference operators.  In particular, let $\overline{\pi_i} = \pi_i -1$.  Then (see ~\cite{Mas09}) $$\atom_a=\overline{\pi}_{w_a} x^{\sort(a)}.$$

The action of the divided difference operators can be realised in terms of \emph{Demazure crystals}.  A \emph{crystal graph} is a directed and colored graph whose edges are defined by \emph{Kashiwara operators}~\cite{Kas91,Kas93,Kas95} $e_i$ and $f_i$. See~\cite{HonKan02} for a detailed introduction to the theory of quantum groups and crystal bases and~\cite{BS17} for a more combinatorial exploration of crystals.  

For a partition $\lambda$, the type $A_n$ highest weight crystal $B(\lambda)$ of highest weight $\lambda$ has vertices indexed by $\SSYT_n(\lambda)$. The \emph{character} of $B(\lambda)$ is 
\[\ch(B(\lambda)) = \sum_{T\in B(\lambda)}x^{\wt(T)},\]
which is equal to the Schur polynomial $s_{\lambda}(x_1, \ldots , x_n)$, reflecting the fact that Schur polynomials are characters for irreducible highest weight modules for $GL_n$.  See Figure~\ref{fig:Youngcrystalkey} below for $B(21)$ when $n=3$, in which the arrows index the Kashiwara operators $f_1$ and $f_2$. Precise definitions of the $f_i$ can be found in e.g.~\cite{BS17}; in particular we note that $f_i(b)=0$ if there is no $i$-arrow emanating from vertex $b$, and the $e_i$ are defined by $e_i(b)=b'$ if $f_i(b')=b$, and $e_i(b)=0$ otherwise.

\begin{figure}[ht]
\begin{center}
\begin{tikzpicture}[xscale=1.5,yscale=1.2]
  \node at (2,4) (T112) {$\tableau{2 \\ 1 & 1}$};
  \node at (0,3) (T113) {$\tableau{3 \\ 1 & 1}$};
  \node at (4,3) (T122) {$\tableau{2 \\ 1 & 2}$};
  \node at (1,2) (T132) {$\tableau{2 \\ 1 & 3}$};
  \node at (3,2) (T123) {$\tableau{3 \\ 1 & 2}$};
  \node at (0,1) (T133) {$\tableau{3 \\ 1 & 3}$};
  \node at (4,1) (T223) {$\tableau{3 \\ 2 & 2}$};
  \node at (2,0) (T233) {$\tableau{3 \\ 2 & 3}$};

  \draw[thick,->,blue  ] (T112) -- (T122) node[midway,above] {$1$};
  \draw[thick,->,blue  ] (T113) -- (T123) node[midway,above] {$1$};
  \draw[thick,->,blue  ] (T123) -- (T223) node[midway,above] {$1$};
 \draw[thick,->,blue  ] (T133) -- (T233) node[midway,above] {$1$};

  \draw[thick,->,red  ] (T112) -- (T113) node[midway,above] {$2$};
  \draw[thick,->,red  ] (T122) -- (T132) node[midway,above] {$2$};
  \draw[thick,->,red  ] (T132) -- (T133) node[midway,above] {$2$};
  \draw[thick,->,red  ] (T223) -- (T233) node[midway,above] {$2$};
\end{tikzpicture}
\caption{\label{fig:Youngcrystalkey} Crystal graph $B(21)$ for $n=3$.}
\end{center}
\end{figure}
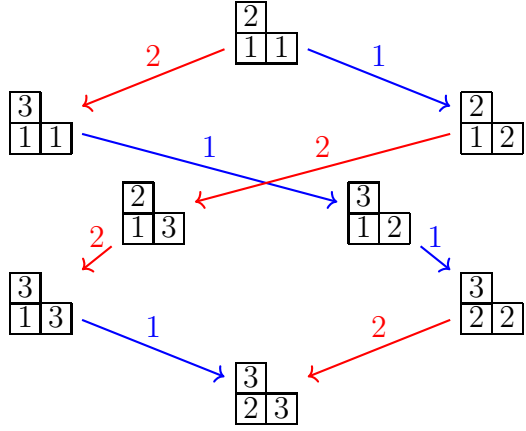 

A Demazure crystal is a subset of $B(\lambda)$ whose character is a key polynomial~\cite{Lit95, Kas93}, obtained by a truncated action of the Kashiwara operators. Specifically, given a subset $X$ of $B(\lambda)$, define operators $\mathfrak{D}_i$ for $1 \le i < n$ by 
$$\mathfrak{D}_i X = \{ b \in B(\lambda) | e_i^r(b) \in X  \textrm{ for some } r \ge 0 \}.$$  
Given a permutation $w$ with reduced word $w = s_{i_1} s_{i_2} \cdots s_{i_k}$, define 
$$B_w(\lambda) = \mathfrak{D}_{i_1} \mathfrak{D}_{i_2} \cdots \mathfrak{D}_{i_k} \{ u_\lambda \},$$ 
where $u_\lambda$ is the highest weight element in $B(\lambda)$, i.e., $e_i(u_{\lambda}) = 0$ for all $1 \le i < n$.  If $b,b' \in B_w(\lambda) \subseteq B(\lambda)$ and $f_i(b)=b'$ in $B(\lambda)$, then the crystal operator $f_i$ is also defined in $B_w(\lambda)$.  The character of a Demazure crystal $B_w(\lambda)$ is defined as $$\ch B_w (\lambda) = \sum_{b \in B_w(\lambda)} x_1^{\wt(b)_1} \cdots x_n^{\wt(b)_n},$$ which is equal to $\key_a$ when $w$ is of shortest length such that $w(a)=\lambda$~\cite{Lit95,Kas93}. The repeated actions of the $\mathfrak{D}_i$ starting with $u^\lambda$ precisely mirrors the repeated action of the divided difference operators $\pi_i$ starting with the monomial $x^\lambda$.

\begin{ex}\label{ex:crystalkey}
Let $a=102$. Then the shortest length $w$ such that $w(a)=\sort(a) = 210$ is $w=s_2s_1$. Therefore, the crystal graph for $\key_{102}$ is the subgraph of $B(21)$ consisting of all vertices that can be obtained from the highest weight $\tableau{2 \\ 1 & 1}$ by first applying a sequence of $f_1$'s and then a sequence of $f_2$'s. In Figure~\ref{fig:Youngcrystalkey}, these are the tableaux of weight $210$, $201$, $120$, $111$ (the leftmost such) and $102$. Hence $\key_{102} = x_1^2 x_2 + x_1x_2^2 + x_1^2 x_3 + x_1 x_2 x_3 + x_1 x_3^2$.  
\end{ex}

\subsubsection{Compatible Sequences}{\label{sec:CompatibleSequences}}

Key polynomials can also be constructed using \emph{compatible sequences} as follows.  Let ${\bf b} = b_1 b_2 \cdots b_p$ be a word in the alphabet $\{1,2,\ldots n\}$.  A word ${\bf w}=w_1 w_2 \cdots w_p$ is \emph{${\bf b}$-compatible} if
\begin{enumerate}
\item $1 \le w_1 \le w_2 \le \cdots \le w_p\le n$,
\item $w_k < w_{k+1}$ whenever $b_k < b_{k+1},$ for all $1 \le k <p$, and
\item $w_k \le b_k$ for all $1 \le k \le p$ (flag condition).
\end{enumerate}

\begin{theorem}\cite{RS95}\label{thm:keycompatible}
Let $a$ be a weak composition of length $n$. Then 
$$\key_a = \sum_{\rev(b) \sim \col(\keytab(a)), \; w \textrm{ is $b$-compatible}} x^{\comp(w)},$$
where $\comp(w)$ is the weak composition whose $i^{th}$ entry counts the incidences of $i$ in $w$. 
\end{theorem}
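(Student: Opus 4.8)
The plan is to prove the identity by constructing a weight-preserving bijection between the pairs indexing the right-hand side and the semistandard Young tableaux appearing in the right key formula of Theorem~\ref{thm:rightkey}, and then invoking that theorem. Write $\lambda = \sort(a)$, so that $\keytab(a)$ is the key of shape $\lambda$ and $\col(\keytab(a))$ is its column reading word. I would reformulate each indexing pair $(b,w)$ — with $\rev(b)\sim\col(\keytab(a))$ and $w$ being $b$-compatible — as a two-line array $\binom{w}{b}$ and feed it into Schensted/RSK insertion.

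First I would observe that compatibility conditions (1) and (2) are exactly the conditions making $\binom{w}{b}$ a valid two-line array: (1) says the top row $w$ weakly increases, and (2) says that within a block of equal $w$-values the entries of $b$ weakly decrease, since a strict ascent in $b$ forces a strict ascent in $w$. With a suitable insertion convention (a choice of row versus column insertion and of the orientation of the array, arranged so the recording tableau has shape $\lambda$ and the insertion tableau is $\keytab(a)$), RSK produces a pair $(P,Q)$ of semistandard tableaux of a common shape, where $Q$ is the recording tableau whose entries are precisely the values of $w$; in particular $\wt(Q)=\comp(w)$, matching the monomial $x^{\comp(w)}$, and since $w_p\le n$ we have $Q\in\SSYT_n(\lambda)$. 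The Knuth condition $\rev(b)\sim\col(\keytab(a))$ pins the insertion tableau to $\keytab(a)$, forcing the common shape to be $\lambda$. Because RSK is a bijection, fixing the insertion tableau at $\keytab(a)$ makes $(b,w)\mapsto Q$ a bijection from the pairs satisfying (1), (2) and $\rev(b)\sim\col(\keytab(a))$ onto all of $\SSYT_n(\lambda)$.

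It remains to match the two constraints that cut these sets down: the flag condition (3), $w_k\le b_k$, on the left, and the right key bound $K_+(Q)\le\keytab(a)$ on the right. This equivalence is the crux, and I expect it to be the main obstacle. I would attack it using the column-frank word characterization of the right key recalled just before Theorem~\ref{thm:rightkey}, together with the standard link between recording tableaux and right keys: the block structure that conditions (1) and (2) impose should let one read off column-frank words Knuth-equivalent to $\col(Q)$ whose final subwords compute the columns of $K_+(Q)$, and one then shows that the flag bound $w_k\le b_k$ holds for every $k$ exactly when those columns are entrywise bounded by the corresponding columns of $\keytab(a)$. Concretely, a violation of the flag at some position should be shown to force an entry of $K_+(Q)$ to strictly exceed the corresponding entry of $\keytab(a)$, and conversely. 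Granting this equivalence, the bijection of the previous paragraph restricts to one between flag-compatible pairs and $\{Q\in\SSYT_n(\lambda): K_+(Q)\le\keytab(a)\}$, and summing $x^{\comp(w)}=x^{\wt(Q)}$ yields $\key_a$ by Theorem~\ref{thm:rightkey}.

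As a fallback that sidesteps the insertion conventions, I would keep in reserve an inductive proof via Demazure operators: show that the compatible-sequence generating function equals $x^{\sort(a)}$ when $a$ is a partition and is carried to the corresponding sum for $s_i\cdot a$ by $\pi_i$, so that it agrees with $\pi_{w_a}x^{\sort(a)}=\key_a$. The delicate point there is controlling how applying $\pi_i$ alters the flag condition and the set of admissible words $b$, which is essentially the same difficulty in repackaged form.
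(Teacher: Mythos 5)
First, a point of comparison: the paper offers no proof of Theorem~\ref{thm:keycompatible} at all --- it is imported from \cite{RS95} as background --- so your proposal must stand on its own; for what it is worth, the route you sketch (an insertion bijection combined with the right-key formula of Theorem~\ref{thm:rightkey}) is the natural, essentially classical one. The problem is that what you have written is a plan rather than a proof, and the missing piece sits exactly at the step you yourself call the crux. The equivalence ``$w_k\le b_k$ for all $k$ if and only if $K_+(Q)\le \keytab(a)$'' \emph{is} the theorem: once the insertion bijection is in place, weight preservation, the bound $w_p\le n$, and the appeal to Theorem~\ref{thm:rightkey} are routine bookkeeping. For this equivalence you offer only the expectation that a ``standard link between recording tableaux and right keys'' will deliver it; no such link is identified, and the sentence ``a violation of the flag at some position should be shown to force an entry of $K_+(Q)$ to strictly exceed the corresponding entry of $\keytab(a)$, and conversely'' is a restatement of the claim, not an argument for it. Until that lemma is proved, the theorem is not.

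Second, the ``suitable insertion convention'' cannot be left floating, because the most obvious choice fails. Under conditions (1)--(2), the word $b$ weakly \emph{decreases} within each block of equal $w$-values, so if you row-insert $b$ the boxes created by such a block form a vertical strip and the recording tableau is not semistandard (two equal labels land in the same column; try $b=21$, $w=11$). The correct choice is column insertion of $b$, equivalently row insertion of $\rev(b)$ --- which is also exactly what makes the hypothesis $\rev(b)\sim\col(\keytab(a))$ pin the insertion tableau to $\keytab(a)$ --- and then the column bumping lemma places equal recording labels in horizontal strips added left to right, so that inverting the correspondence requires removing, among equal labels, the rightmost box first. These points are repairable, but your reduction of the theorem to the crux depends on them, so they must be repaired explicitly. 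The divided-difference fallback is in the same state: controlling how $\pi_i$ interacts with the flag condition is the whole difficulty, and you defer it there as well.
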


\begin{ex}\label{ex:keycompatible}
Let $a=032$. We have ${\rm key}(032)=\tableau{3 & 3 \\ 2 & 2 & 2 }$, and $\col({\rm key}(032)) = 32322$. The set of words Knuth-equivalent to $32322$ is $\{32322, 33222, 32232, 23232, 23322\}$. Reversing these gives the set 
$\{22323, 22233, 23223, 23232, 22332\}.$ We compute the set of compatible sequences for each of these:

\begin{figure}[h]
\begin{tabular}{l | l}
{\bf Word} & {\bf Compatible sequences} \\\hline
22323 & 11223 \\\hline
22233 & 22233 12233 11233 11133 11123 11122 \\\hline
23223 & 12223 \\\hline
23232 & \\\hline
22332 & 11222 \\
\end{tabular}
\caption{Compatible sequences. \label{fig:compseq}}
\end{figure}

\end{ex}

Observe there are $9$ compatible sequences, each having the weight $\comp(w)$ of a monomial of $\key_{032}$. In Proposition~\ref{prop:keytoslidecompatible}, we interpret the \emph{fundamental slide} expansion of a key polynomial in terms of Knuth equivalence classes.

\section{Young key polynomials}{\label{Sec:Youngkeys}}

We now introduce the \emph{Young key polynomial} basis for polynomials. This basis has proved useful in computing the Hilbert series of a generalization of the coinvariant algebra, specifically, in constructing a Gr\"{o}bner basis for the ideal $I_{n,k} = \langle x_1^k ,x_2^k, \hdots , x_n^k , e_n, e_{n-1} , \hdots , e_{n-k+1} \rangle$ \cite{HRS18}. However, the combinatorial and representation-theoretic properties of the Young key polynomials have not, to our knowledge, been explored previously, nor has the connection to the overall flip-and-reverse perspective. We begin by providing a combinatorial description of the Young key polynomial basis analogous to that of the Young version of the quasisymmetric Schur polynomials. 

Note that the definition of Young triples extends verbatim to weak composition diagrams. As in the definition of key polynomials, we append a \emph{basement column} to diagrams. Given a weak composition $a$ of length $n$, define the \emph{Young key fillings} $\YKSSF(a)$ for $a$ to be the fillings of $D(\rev(a))$ (note the reversal) with entries from $\{1, \ldots , n\}$ satisfying the following conditions.
\begin{enumerate}
\item Entries in each row, including basement entries, weakly increase from left to right.
\item Entries do not repeat in any column.
\item All type I and type II Young triples, including triples using basement entries, are Young inversion triples.
\end{enumerate}
Define the \emph{Young key polynomial} $\ykey_a$ by
\[\ykey_a = \sum_{T\in \YKSSF(a)}x^{\wt(T)},\]
where only the non-basement entries contribute to the weight. 

For example, we have $\ykey_{230} = x^{230} + x^{221} + x^{212} + x^{203} + x^{113} + x^{023} + x^{131} + x^{122} + x^{032}$, which is computed by the elements of $\YKSSF(230)$ shown in Figure~\ref{fig:ykey230}.

\begin{figure}[ht]
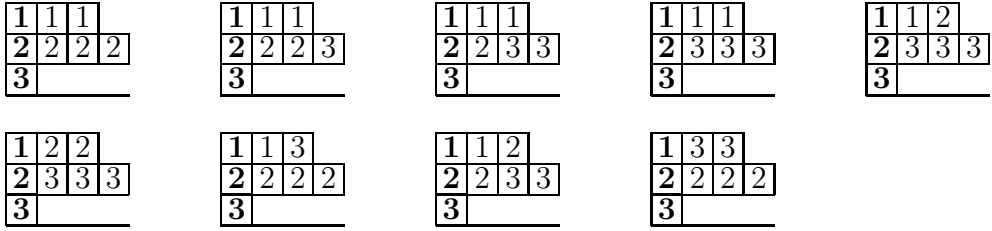

\begin{displaymath}
 \begin{array}{c@{\hskip3\cellsize}c@{\hskip3\cellsize}c@{\hskip3\cellsize}c@{\hskip3\cellsize}c@{\hskip3\cellsize}c@{\hskip3\cellsize}c}

\tableau{ \bf{1} & 1 & 1 \\ \bf{2} & 2 & 2 & 2 \\  \bf{3} \\ \hline } &  \tableau{ \bf{1} & 1 & 1 \\ \bf{2} & 2 & 2 & 3 \\  \bf{3} \\ \hline } &   \tableau{ \bf{1} & 1 & 1 \\ \bf{2} & 2 & 3 & 3 \\  \bf{3} \\ \hline } &   \tableau{ \bf{1} & 1 & 1 \\ \bf{2} & 3 & 3 & 3 \\  \bf{3} \\ \hline } &   \tableau{ \bf{1} & 1 & 2 \\ \bf{2} & 3 & 3 & 3 \\  \bf{3} \\ \hline } \\ \\   \tableau{ \bf{1} & 2 & 2 \\ \bf{2} & 3 & 3 & 3 \\  \bf{3} \\ \hline } &   \tableau{ \bf{1} & 1 & 3 \\ \bf{2} & 2 & 2 & 2 \\  \bf{3} \\ \hline } &   \tableau{ \bf{1} & 1 & 2 \\ \bf{2} & 2 & 3 & 3 \\  \bf{3} \\ \hline } &   \tableau{ \bf{1} & 3 & 3 \\ \bf{2} & 2 & 2 & 2 \\  \bf{3} \\ \hline }    
\end{array}
\end{displaymath}
\caption{The 9 Young key fillings of shape $230$. (Basement entries are in bold.)}\label{fig:ykey230} 
\end{figure}

Note that the definition immediately implies that 
\begin{equation}\label{eqn:keyyoungkey}
\ykey_a(x_1, x_2 , \hdots , x_n) = \key_{\rev(a)}(x_n, x_{n-1} , \hdots , x_1).
\end{equation}

\begin{proposition}\label{prop:youngkeybasis}
The Young key polynomials are a basis for $\Poly_n$, containing the Schur polynomials. In particular, if $a$ is decreasing then
\[\ykey_a = s_\lambda(x_1,\ldots , x_n),\]
where $\lambda$ is $a$ with trailing zeros removed.
\end{proposition}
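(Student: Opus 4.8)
The plan is to prove the two assertions of the proposition in turn: first that the Young key polynomials form a basis for $\Poly_n$, and second the explicit identification $\ykey_a = s_\lambda$ when $a$ is decreasing. The cleanest route is to leverage the flip-and-reverse identity established in equation~\eqref{eqn:keyyoungkey}, namely $\ykey_a(x_1,\ldots,x_n) = \key_{\rev(a)}(x_n,\ldots,x_1)$, together with the known fact (Theorem~\ref{thm:keydefinition} and the surrounding discussion) that the key polynomials $\{\key_b : b \text{ a weak composition of length } n\}$ form a basis for $\Poly_n$.

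For the basis claim, I would argue as follows. The map $\iota$ on $\Poly_n$ sending $f(x_1,\ldots,x_n)$ to $f(x_n,\ldots,x_1)$ is a ring automorphism, hence a linear automorphism of $\Poly_n$; and the map $a \mapsto \rev(a)$ is a bijection on weak compositions of length $n$. By~\eqref{eqn:keyyoungkey}, the set $\{\ykey_a\}$ is precisely the image under $\iota$ of the set $\{\key_{\rev(a)}\}$, which ranges over all key polynomials as $a$ ranges over all weak compositions of length $n$. Since $\iota$ is a linear automorphism carrying a basis to a basis, the Young key polynomials are a basis for $\Poly_n$. I expect this step to be essentially immediate and not to present any obstacle.

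For the explicit formula, the key input is Proposition~\ref{prop:youngkeybasis}'s hypothesis that $a$ is decreasing. When $a$ is decreasing, $\sort(a) = a$ and the minimal-length permutation $w_a$ sending $a$ to $\sort(a)$ is the identity, so by the divided-difference characterization $\key_a = x^{\sort(a)}$ with no operators applied — but this is not quite what I want, since I need the \emph{Young} key. Instead I would use~\eqref{eqn:keyyoungkey}: if $a$ is decreasing then $\rev(a)$ is increasing, and I need $\key_{\rev(a)}(x_n,\ldots,x_1)$. The crucial classical fact is that when $b = \rev(a)$ is a weakly increasing weak composition (equivalently, $\sort(b)$ arranged in increasing order), the key polynomial $\key_b$ equals the Schur polynomial $s_\lambda$, where $\lambda = \sort(b)$: indeed, for an increasing composition the associated permutation $w_b$ is the longest element relative to the relevant parabolic, so $\pi_{w_b}$ performs full symmetrization and $\key_b = s_\lambda$. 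Since Schur polynomials are symmetric, applying $\iota$ leaves $s_\lambda$ unchanged, giving $\ykey_a = s_\lambda$.

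The main obstacle — really the only nontrivial point — is justifying that $\key_{\rev(a)} = s_\lambda$ for increasing $\rev(a)$, and then confirming the variable-reversal does nothing. The symmetry of $s_\lambda$ disposes of the latter immediately. For the former, rather than invoking an external fact about longest parabolic elements, it may be cleaner to argue directly from the tableau model of Proposition~\ref{prop:schurexpansion}, or simply to note that the identification of $\key_b$ for increasing $b$ with a Schur polynomial is standard and follows from the full symmetrization interpretation of $\pi_{w_b}$. Alternatively, one can give a self-contained combinatorial proof by showing the key fillings $\YKSSF(a)$ for decreasing $a$ are in weight-preserving bijection with $\SSYT_n(\lambda)$: when $a$ is decreasing the basement and shape conditions force the Young key fillings of $D(\rev(a))$ to correspond exactly to semistandard Young tableaux of shape $\lambda$, whose weight generating function is $s_\lambda$ by the standard definition recalled in the introduction. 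I would present whichever of these is shortest, likely the symmetrization argument via~\eqref{eqn:keyyoungkey}, with the bijective argument as a remark.
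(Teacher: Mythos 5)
Your proposal is correct and takes essentially the same approach as the paper's proof: both parts rest on the identity $\ykey_a(x_1,\ldots,x_n)=\key_{\rev(a)}(x_n,\ldots,x_1)$, with the basis claim deduced from the fact that key polynomials form a basis of $\Poly_n$ (your automorphism $\iota$ just makes explicit what the paper states tersely), and the identification $\ykey_a=s_\lambda$ deduced from the classical fact $\key_{\rev(a)}=s_\lambda$ for decreasing $a$ (which the paper simply cites to Macdonald) together with the invariance of Schur polynomials under reversing the variables. There are no gaps; the only difference is that you sketch justifications (full symmetrization, or a bijection of fillings) for the classical fact rather than citing it.
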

\begin{proof}
The Young key polynomials are equinumerous with the key polynomials. Any polynomial can be expressed as a linear combination of key polynomials (since key polynomials are a basis of $\Poly_n$), and thus as a linear combination of Young key polynomials by (\ref{eqn:keyyoungkey}). Hence the Young key polynomials are a basis of $\Poly_n$.

We have $\key_{\rev(a)} = s_{a}$ \cite{Mac91}, hence $\ykey_a = s_a = s_\lambda$ by (\ref{eqn:keyyoungkey}) and because Schur polynomials are symmetric, hence invariant under exchanging variables.
\end{proof}

In this way, both the key and Young key polynomials extend the Schur polynomials to $\Poly_n$. This is in fact their only coincidence.

\begin{theorem}\label{thm:keyyoungkeyintersect}
The polynomials that are both key polynomials and Young key polynomials are exactly the Schur polynomials. 
\end{theorem}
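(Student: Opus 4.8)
The plan is to prove the two inclusions separately. The inclusion of the Schur polynomials into the intersection is immediate: for a partition $\lambda$ padded with trailing zeros to length $n$, Proposition~\ref{prop:youngkeybasis} gives $\ykey_\lambda = s_\lambda$ (as $\lambda$ is weakly decreasing), while the same proposition together with $\key_{\rev(\lambda)} = s_\lambda$ shows $s_\lambda$ is also a key polynomial (here $\rev(\lambda)$ is weakly increasing). Hence every Schur polynomial is simultaneously a key polynomial and a Young key polynomial, and it remains only to show the reverse inclusion.

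For the reverse inclusion, suppose $\key_a = \ykey_b$. Applying the variable-reversing substitution $x_i \mapsto x_{n+1-i}$ to both sides and using \eqref{eqn:keyyoungkey} converts this into an identity of genuine key polynomials, namely $\key_{\rev(b)}(x_1,\ldots,x_n) = \key_a(x_n,\ldots,x_1)$. I would then compare distinguished extreme monomials of the two sides. The two ingredients are facts about the monomial support of a key polynomial $\key_c$, both read off from the divided-difference description $\key_c = \pi_{w_c}\, x^{\sort(c)}$ of Section~\ref{sec:keycrystal}: first, that the lexicographically largest monomial with respect to $x_1 > \cdots > x_n$ (the \emph{dominant term}) is $x^{\sort(c)}$, occurring with coefficient $1$; and second, that the monomial largest with respect to the opposite lexicographic order (comparing the exponent of $x_n$ first, then $x_{n-1}$, and so on) is exactly $x^{c}$.

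Granting these, the dominant term of the left side $\key_{\rev(b)}$ is $x^{\sort(b)}$, whereas the dominant term of the right side $\key_a(x_n,\ldots,x_1)$ is the variable-reversal of the opposite-lex-largest monomial of $\key_a$, namely the reversal of $x^a$. Equating these forces $\sort(b) = \rev(a)$, so that $a = \rev(\sort(b))$ is weakly increasing. A key polynomial indexed by a weakly increasing composition is a Schur polynomial (this is exactly $\key_{\rev(\lambda)} = s_\lambda$ used in the proof of Proposition~\ref{prop:youngkeybasis}, applied to $\lambda = \sort(a)$), so $\key_a = s_{\sort(a)}$ and the common polynomial is a Schur polynomial, as desired.

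The crux, and the step I expect to be the main obstacle, is establishing that $x^c$ is the opposite-lex-largest monomial of $\key_c$. The dominant-term statement is routine: every monomial of $\key_c$ has content dominated by $\sort(c)$, and $x^{\sort(c)}$ is its unique top term, surviving with coefficient $1$ through each $\pi_i$. The opposite extreme is more delicate. I would argue by induction on $\ell(w_c)$ using the recursion $\key_c = \pi_i \key_{s_i c}$, valid when $c_i < c_{i+1}$, together with the fact that $\pi_i$ acts on a monomial $x^d$ with $d_i \ge d_{i+1}$ by spreading it across the interval of monomials from $x^d$ down to $x^{s_i d}$, the latter being the opposite-lex-largest of the spread. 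This moves the relevant extreme monomial exactly from $x^{s_i c}$ to $x^{c}$; the genuine difficulty is verifying that no monomial produced from the other terms of $\key_{s_i c}$ overshoots $x^c$ in the opposite-lex order. Because $w_c$ is reduced, all coefficients remain in $\{0,1\}$ throughout, and one can instead invoke the Demazure crystal structure of Section~\ref{sec:keycrystal}, in which $x^c$ is the weight of the unique extremal vertex of $B_{w_c}(\sort(c))$; this reducedness is precisely what guarantees the extreme term both survives and is attained uniquely.
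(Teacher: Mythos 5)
Your overall strategy is correct, and in the converse direction it is genuinely different from the paper's. The paper keeps working with the pair $(\key_a,\ykey_b)$: it notes that $x^{\sort(a)}$ is a term of $\key_a$ and then argues that the only Young key polynomial containing that monomial is $\ykey_{\sort(a)}$, which is a Schur polynomial. (As literally stated that support claim needs the preliminary observation that $\sort(a)=\sort(b)$, which follows from equality of the two polynomials --- note for instance that $\ykey_{230}$ contains $x^{221}=x^{\sort(221)}$.) You instead use the substitution $x_i\mapsto x_{n+1-i}$ to convert the identity into one between two honest key polynomials and compare two \emph{different} extreme monomials, the lex-largest term $x^{\sort(\cdot)}$ and the opposite-lex-largest term $x^{(\cdot)}$; this sidesteps all questions about the monomial supports of Young key polynomials. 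Granting your two extreme-term facts, the deduction $\sort(b)=\rev(a)$, hence $a$ weakly increasing, hence $\key_a=s_{\sort(a)}$, is airtight, and your first inclusion matches the paper's exactly.

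The one genuine weak point is exactly where you flag it: the claim that $x^c$ is the opposite-lex-largest monomial of $\key_c$. The claim is true (it is the classical unitriangularity of key polynomials over monomials), but neither of your sketches establishes it. The divided-difference induction has precisely the hole you name: a lower term $x^d$ of $\key_{s_i c}$ with $d_i$ large can, after applying $\pi_i$, produce monomials with a large exponent in position $i+1$, so ``no overshooting'' does not follow from opposite-lex domination alone without extra structural input. And the crystal invocation is weaker than what you need: knowing that $B_{w_c}(\sort(c))$ contains a \emph{unique} vertex of weight $c$ does not by itself imply that every other weight occurring in the Demazure crystal is opposite-lex smaller than $c$. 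Fortunately the paper's own filling model closes the gap in a few lines. For $T\in\KSSF(c)$, every entry in row $i$ is at most the basement entry $n+1-i$. Hence all entries equal to $n$ lie in row $1$, which has $c_n$ boxes, so $\wt(T)_n\le c_n$; if equality holds, row $1$ consists entirely of $n$'s, so all entries equal to $n-1$ lie in row $2$ and $\wt(T)_{n-1}\le c_{n-1}$; iterating, every $T\in\KSSF(c)$ satisfies $\wt(T)\le c$ in opposite-lex order, with equality only for the filling whose row $i$ is constantly $n+1-i$, which one checks is a valid key filling. With that lemma supplied, your proof is complete.
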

\begin{proof}
Suppose $s_{\lambda}(x_1,\ldots , x_n)$ is a Schur polynomial in $n$ variables. Then 
\[s_{\lambda}(x_1,\ldots , x_n) = \key_{0^{n-\ell(\lambda)}\times \rev(\lambda)} = \ykey_{\lambda\times 0^{n-\ell(\lambda)}},\]
where $0^m\times b$ (respectively, $b\times 0^m$) denotes $b$ with $m$ zeros prepended (respectively, appended).

For the converse, note that for any weak composition $a$, the key polynomial $\key_a$ has the monomial $x^{\sort(a)}$ as a term; this follows from the divided difference definition. But the only Young key polynomial containing $x^{\sort(a)}$ as a term is $\ykey_{\sort(a)}$ itself, which is a Schur polynomial. So if $\key_a$ is not a Schur polynomial it cannot be equal to any Young key polynomial.
\end{proof}

We also define a Young analogue of the Demazure atoms. Let $a$ be a weak composition of length $n$. Define the \emph{Young atom fillings} $\YASSF(a)$ for $a$ to be the fillings of $D(a)$ (no basement) with entries from $\{1, \ldots , n\}$ satisfying the following conditions.
\begin{enumerate}
\item Entries weakly increase from left to right in each row.
\item Entries do not repeat in any column.
\item All type I and type II Young triples are Young inversion triples.
\item The first entry of each row is equal to its row index.
\end{enumerate}
Define the \emph{Young atom} $\ya_a$ by
\[\ya_a = \sum_{T\in \YASSF(a)}x^{\wt(T)}.\]

The definition immediately implies that $\ya_a(x_1, x_2 , \hdots , x_n) = \atom_{\rev(a)}(x_n, x_{n-1} , \hdots , x_1).$  Similar to Proposition~\ref{prop:youngkeybasis}, the Young atoms form a basis of $\Poly_n$.  We can establish the coincidences between Demazure atoms and Young atoms, as we did in Theorem~\ref{thm:keyyoungkeyintersect} for keys and Young keys. Note the condition for coincidence is less restrictive than that for coincidence of quasisymmetric Schur and Young quasisymmetric Schur polynomials (Theorem~\ref{thm:yqsqs}), due to elements of $\YASSF(a)$ and $\ASSF(a)$ necessarily having identical first column. 

\begin{theorem}\label{thm:atomyatom}
The polynomials that are both Demazure atoms and Young atoms are precisely the $\ya_a$ such that $|a_i-a_{i+1}|\le 1$ for all $1\le i < n$. 
\end{theorem}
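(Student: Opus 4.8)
The plan is to reduce the statement to two manageable pieces: a characterization of when the Demazure atom $\atom_a$ degenerates to the single monomial $x^{a}$, and a rigidity lemma ensuring no coincidence can occur at distinct indices. The engine throughout is the observation flagged in the preceding remark, that $\atom_a$ and $\ya_a$ are generating functions over fillings of the \emph{same} diagram $D(a)$ whose first column is forced (the first entry of row $i$ is $i$). In a reverse atom filling every entry is $\le$ its row index, while in a Young atom filling every entry is $\ge$ its row index; hence for $T\in\ASSF(a)$ the weight $c=\wt(T)$ satisfies $\sum_{i\ge k}c_i\le\sum_{i\ge k}a_i$ for all $k$, and for $T\in\YASSF(a)$ it satisfies $\sum_{i\le k}c_i\le\sum_{i\le k}a_i$ for all $k$, with the diagonal filling (row $i$ constant equal to $i$) attaining $c=a$ in both cases.

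First I would treat coincidences at the \emph{same} index. The two inequalities force the supports of $\atom_a$ and $\ya_a$ to meet only in $x^{a}$: a common weight $c$ obeys $\sum_{i\le k}c_i\le\sum_{i\le k}a_i$ and $\sum_{i\ge k}c_i\le\sum_{i\ge k}a_i$ simultaneously, and adding complementary ranges gives $\sum_{i\le k}c_i=\sum_{i\le k}a_i$ for all $k$, i.e. $c=a$. Consequently $\atom_a=\ya_a$ if and only if both sides equal $x^{a}$, i.e. if and only if $\ASSF(a)$ and $\YASSF(a)$ are both singletons.

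Next I would characterize when $\ASSF(a)$ is a singleton, claiming $\atom_a=x^{a}$ iff $a_i\le a_{i-1}+1$ for all $i$. For sufficiency, processing boxes by columns, each box $(i,j)$ with $j\ge2$ either has its whole column present below it—so that column already contains $1,\dots,i-1$ and column-distinctness with the row bound $\le i$ forces the entry to be $i$—or, since $a_i\le a_{i-1}+1$, it is a single overhang with $a_i=a_{i-1}+1$ and $j=a_i$, where the Type~B triple on $(i,j-1),(i,j),(i-1,j-1)$ forces the entry to be $i$ (any smaller value makes it a non-inversion triple). For necessity, if $a_i\ge a_{i-1}+2$ the rightmost overhang box of row $i$ has no box beneath it, so lowering it to $i-1$ yields a second valid filling. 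By the flip-and-reverse relation $\ya_a(x_1,\dots,x_n)=\atom_{\rev(a)}(x_n,\dots,x_1)$, the set $\YASSF(a)$ is a singleton iff $\ASSF(\rev a)$ is, i.e. iff $a_i\le a_{i+1}+1$ for all $i$. The conjunction of these one-sided bounds is exactly $|a_i-a_{i+1}|\le1$ for all $i$, settling the same-index case and, in particular, giving the direction that every $a$ satisfying the condition yields a polynomial $\atom_a=\ya_a=x^{a}$ lying in both bases.

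The hard part, and the main obstacle, is to show these are the \emph{only} coincidences: if $\ya_a=\atom_b$ then $b=a$ (equivalently, any polynomial that is simultaneously a Demazure atom and a Young atom is a single monomial). Taking leading terms under the opposite lexicographic orders $x_1>\cdots>x_n$ and $x_n>\cdots>x_1$ shows that $a$ is the unique dominance-maximal exponent and $b$ the unique dominance-minimal exponent (for the partial sums $\sum_{i\le k}$) of the common polynomial, so $a$ dominates $b$; the difficulty is that this one-directional comparison does not by itself force equality, and—since both Young-side and atom-side dominance inequalities point the same way—the dominance/leading-term bookkeeping is provably insufficient. I would therefore close the gap combinatorially, using the singleton characterization above: a genuinely multi-term atom $\atom_b$ has $b_i\ge b_{i-1}+2$ for some $i$, and I would analyze the overhang fillings realizing its non-diagonal weights, aiming to show that the resulting multiset of weights cannot be realized by Young atom fillings of the only admissible shape (the dominance-maximal exponent). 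I expect promoting the one-sided dominance $a\succeq b$ to the equality $a=b$ to be precisely where the explicit triple/overhang combinatorics is indispensable; once this rigidity lemma is in hand, $\ya_a=\atom_b$ forces $b=a$ and the theorem follows from the same-index analysis.
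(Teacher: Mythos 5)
Your same-index analysis is sound and genuinely nice: the two families of partial-sum inequalities do show that the supports of $\atom_a$ and $\ya_a$ meet only in $x^a$, which cleanly reduces $\atom_a=\ya_a$ to both $\ASSF(a)$ and $\YASSF(a)$ being singletons, and this is a different (arguably tidier) route than the paper's, which instead exhibits an explicit filling $T'\in\YASSF(a)$ whose weight is unattainable in $\ASSF(a)$ whenever $a_i-a_{i+1}\ge 2$. But the theorem classifies \emph{all} coincidences between the two bases, and the essential step --- if $\ya_a=\atom_b$ then $a=b$ --- is exactly the one you leave open. You correctly diagnose that your dominance/leading-term bookkeeping cannot close it (both inequalities point the same way), and what you offer in its place (``analyze the overhang fillings\dots aiming to show\dots I expect\dots'') is a plan, not a proof. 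This is where the paper does its real work: since entries never repeat in a column, no variable can appear in $\atom_b$ with exponent exceeding $\max(b)$, while $\ya_a$ contains a monomial with exponent $\max(a)$; iterating this comparison over longest, next-longest, etc.\ rows forces $b$ to be a rearrangement of $a$. Then, taking the diagonal filling $T\in\YASSF(a)$ of weight $a$, the paper shows any $S\in\ASSF(b)$ of the same weight must have the same column sets as $T$ (the column heights of $D(b)$ and $D(a)$ agree, and the $a_j$ copies of each entry $j$ must occupy distinct columns), after which the triple conditions force every entry of each successive column of $S$ to sit directly beside its copy in the previous column, so $S=T$ and hence $b=a$. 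Without this, or some substitute, rigidity lemma, what you have proved is only the diagonal statement $\atom_a=\ya_a\iff |a_i-a_{i+1}|\le 1$, not the theorem.

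A secondary, fixable flaw: in your sufficiency argument for ``$a_i\le a_{i-1}+1$ for all $i$ implies $\ASSF(a)$ is a singleton,'' the case split is not exhaustive. A box $(i,j)$ with $j\ge 2$ can fail to have its full column present below it without being the rightmost box of a unit overhang: for $a=(0,1,2,3)$ the box in row $4$, column $2$ has no boxes below it in rows $1$ and $2$, yet $j=2\neq a_4=3$, so it falls in neither of your cases; for $a=(0,1,2,2)$ the box in row $4$, column $2$ is worse, since there row $i$ is not even strictly longer than row $i-1$, so the Type~B triple you name does not exist, and one must instead use the triple with the \emph{highest lower row that is strictly shorter than row $i$} (here row $2$), whose existence and length are guaranteed by the hypothesis $a_m\le a_{m-1}+1$. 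The characterization itself is true --- indeed it is a one-sided refinement of what the paper merely asserts as ``straightforward to observe'' in the two-sided case --- but the induction needs this more careful choice of triple. None of this, however, compensates for the missing cross-index rigidity, which is the heart of the theorem.
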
 
\begin{proof}
First we show that if $\ya_a = \atom_b$ then $a=b$. Suppose $\max(a) > \max(b)$, where $\max(a)$ is the largest part of $a$. Then since entries cannot repeat in any column for either $\YASSF$ or $\ASSF$, $\ya_a$ has terms where some $x_i$ has degree $\max(a)$, but $\atom_b$ cannot have any such term.  Hence if $\ya_a=\atom_b$, the longest row(s) in $D(a)$ and $D(b)$ must have the same length. By a similar argument, the next-longest rows must then have the same length, etc. 
Thus if $\ya_a = \atom_b$, then $b$ must be a rearrangement of $a$. 

Now suppose $b$ rearranges $a$. Let $T\in \YASSF(a)$ be such that all entries in the $j$th row (for each $j$) are equal to $j$, and suppose there exists $S\in \ASSF(b)$ with the same weight as $T$. By definition, the first entry in each row $j$ of $S$ is $j$. Because the rows of $b$ rearrange those of $a$, the number of boxes in each column of $D(b)$ is the same as that for each column of $D(a)$. It follows that the set of entries in each column of $S$ must be the same as that in the corresponding column of $T$, since $T$ has $a_j$ instances of each entry $j$, and entries cannot repeat in any column of $T$ or $S$.  

Now consider the entries in the second column of $S$, which are a subset of the entries in the first column for both $S$ and $T$. None of these entries can go in a row above the row that contains that entry in the first column, else the two copies of that entry must violate one of the triple conditions. Nor can they go in a row below, since entries must decrease along each row. So each entry must go immediately adjacent to the same entry in the first column of $S$. Continuing thus, we obtain $S=T$, so in particular $a=b$.

Now suppose $a_{i} - a_{i+1} \ge 2$ for some $i$. Let $T\in \YASSF(a)$ be such that all entries in each row $j$ are $j$, and let $T'$ be obtained by changing the rightmost $i$ in $T$ to $i+1$. Since $a_{i} - a_{i+1} \ge 2$, this new $i+1$ is not in the first column, and is at least two columns to the right of any other $i+1$, so no $\YASSF$ properties are affected by this change and $T'\in \YASSF(a)$. But there is no $S\in \ASSF(a)$ with weight equal to $T'$: in rows $i+1$ and above, entries in $S$ must agree with entries in $T'$, and then there is nowhere the new $i+1$ could be placed in $S$. Hence $\ya_a\neq \atom_a$. A similar argument shows that if $a_{i+1} - a_{i} \ge 2$, then $\atom_a\neq \ya_a$.

Conversely, it is straightforward to observe that if $|a_i-a_{i+1}|\le 1$ for all $1\le i < n$, then both $\ya_a$ and $\atom_a$ are equal to the single monomial $x^a$.
\end{proof}

\subsection{Compatible sequences}{\label{sec:compatibleseqs}
The Young key polynomials may also be described in terms of compatible sequences. 
For a word $w$ in $\{1,2,\ldots , n\}$ define the \emph{flip} of $w$ to be the word $f(w)$ in $\{1,2,\ldots , n\}$ obtained by replacing each entry $w_i$ with $n+1-w_i$. Also define the \emph{flip-reverse} of $w$, denoted $\frev(w)$, to be the word $f(\rev(w))$, or equivalently $\rev(f(w))$.
\begin{ex}
If $n=6$ and $w=2446154$, then $f(w)=5331623$ and $\frev(w)=3261335$. 
\end{ex}

Let $T$ be an $\SSYT$. Define the \emph{right-to-left column reading word} $\col_R(T)$ to be the word obtained by reading the entries in each column of $T$ from top to bottom starting with the rightmost column and moving from right to left.  

\begin{lemma}\label{lem:frevrighttoleft}
Let $a$ be a weak composition. Then $\frev(\col(\keytab(a)))=\col_R(\keytab(\rev(a)))$.
\end{lemma}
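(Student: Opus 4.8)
First I would make both sides of the claimed identity completely explicit. The key $\keytab(a)$ is the unique key of weight $a$, so it is determined by the multiset of entries in each column. Let $\lambda = \sort(a)$, so $\keytab(a)$ has shape $\lambda$. The left-hand side applies the flip-reverse operation $\frev$ to the standard column reading word $\col(\keytab(a))$ (reading each column bottom-to-top, columns left-to-right). The right-hand side is the right-to-left column reading word $\col_R$ of $\keytab(\rev(a))$, which reads each column top-to-bottom, columns right-to-left. The plan is to track how the $\frev$ operation transforms the sequence of column-readings.

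**Relating the two keys.** The crux is understanding the relationship between $\keytab(a)$ and $\keytab(\rev(a))$ under the flip map $i \mapsto n+1-i$. I would argue that applying the flip map to the entries of $\keytab(a)$ and then reflecting appropriately produces $\keytab(\rev(a))$, up to the reordering of columns. Concretely, a key is determined by its column-sets, which form a nested chain; flipping every entry by $i \mapsto n+1-i$ reverses each column-set and sends the key of weight $a$ to a semistandard object whose column-sets are the flipped column-sets. Since reversal of a weak composition corresponds precisely to this flip-and-reflect symmetry on the associated key, I expect $\keytab(\rev(a))$ to be exactly the key obtained from $\keytab(a)$ by flipping all entries (and the column structure is preserved since flipping a strictly-increasing-up column gives a strictly-increasing-up column after suitable reindexing).

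**Assembling the word identity.** Once the key relationship is pinned down, the word identity should fall out by bookkeeping. Reading $\col(\keytab(a)) = v^{(1)} v^{(2)} \cdots v^{(k)}$ as a concatenation of column-words (left to right), the flip map $f$ sends each entry $i$ to $n+1-i$, converting a bottom-to-top increasing column into a bottom-to-top reading of the flipped (now appropriately ordered) column. The reversal $\rev$ then reverses the entire word, which both reverses the order of the column-blocks (so column $k$ comes first, matching the right-to-left order in $\col_R$) and reverses the reading direction within each block (turning bottom-to-top into top-to-bottom). I would verify that these two effects combine to give exactly $\col_R$ of the flipped key, and that the flipped key is $\keytab(\rev(a))$ by the previous paragraph.

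**Main obstacle.** The step I expect to be most delicate is the precise matching of the within-column reading directions and the column ordering after the flip, since $\col$ and $\col_R$ differ both in the direction columns are traversed and the direction entries within a column are read. The safest route is to write $\frev = \rev \circ f = f \circ \rev$ and apply each piece separately: first show $f(\col(\keytab(a)))$ is the column word of $\keytab(\rev(a))$ read in some consistent direction, then show that the global reversal $\rev$ converts the standard column word of $\keytab(\rev(a))$ into its right-to-left column word $\col_R$. This second fact is a general statement about any key (or any SSYT), and I would isolate and prove it as the key technical observation, since reversing a column word of a tableau of known shape reverses both the block order and the intra-block order in exactly the way that converts $\col$ to $\col_R$.
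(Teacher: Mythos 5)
Your overall architecture is the same as the paper's: identify $\keytab(\rev(a))$ as the columnwise flip of $\keytab(a)$, then track what $\frev$ does to the reading word block by block. However, your ``key technical observation'' is false as stated, and it fails precisely at the point you flagged as delicate. Under the paper's definitions, $\col$ and $\col_R$ do \emph{not} differ in the within-column reading direction: both read each column from top to bottom, and they differ only in whether columns are traversed left-to-right or right-to-left. Consequently the general statement you propose to isolate --- that the global reversal $\rev$ converts $\col(T)$ into $\col_R(T)$ for any semistandard Young tableau (or key) $T$ --- is wrong, since reversal flips the intra-column direction as well as the block order. For instance, take $T = \keytab((1,2))$, the key with bottom row $1\,2$ and top row $2$: then $\col(T)=212$ and $\col_R(T)=221$, while $\rev(\col(T))=212\neq 221$. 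The statement actually needed (and true) is that $\rev$ converts the word reading columns left-to-right but \emph{bottom-to-top} into $\col_R$; and the word $f(\col(\keytab(a)))$ is exactly of this form, because $f$ turns each strictly decreasing top-to-bottom column block of $\col(\keytab(a))$ into a strictly increasing block, i.e.\ the bottom-to-top reading of the corresponding column of the flipped key. With that correction your two misstatements cancel and the computation goes through; executed literally as proposed, the isolated lemma would fail.

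Separately, your identification of the flipped key with $\keytab(\rev(a))$ is asserted (``I expect'') rather than proved: the sentence ``reversal of a weak composition corresponds precisely to this flip-and-reflect symmetry'' is the thing to be shown, not a reason. The quickest repair is the one you gesture at: flipping all entries of $\keytab(a)$ preserves the nested-column-set property (so, after writing each column in increasing order, the result is again a key), and the number of $j$'s in the flipped key equals the number of $(n+1-j)$'s in $\keytab(a)$, namely $a_{n+1-j}=\rev(a)_j$; uniqueness of the key of a given weight then forces the flipped key to equal $\keytab(\rev(a))$. The paper obtains the same fact more directly by noting that column $i$ of $\keytab(a)$ has entry set $\{j : a_j\ge i\}$, whose flip is $\{j : \rev(a)_j \ge i\}$, the entry set of column $i$ of $\keytab(\rev(a))$; either route is fine once written down, but as it stands this step, together with the false reading-word lemma, leaves your proposal incomplete.
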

\begin{proof}
First of all, $\keytab(a)$ and $\keytab(\rev(a))$ have the same shape.  To see this, note that the height of the $i^{th}$ column of $\keytab(a)$ is equal to the number of entries $a_j$ in $a$ such that $a_j \ge i$.  This number is the same for $a$ and $\rev(a)$.

This also shows that for any given column, the entries of that column in $\keytab(a)$ are the flips of the entries of that column in $\keytab(\rev(a))$. Hence when the word for $\keytab(\rev(a))$ is reversed, the column breaks line up and the word in each column is the flip-reverse of the word in that column of $\keytab(a)$. The statement follows.
\end{proof}

\begin{ex}
Let $a=(2,4,0,3)$. We have 
\[\keytab(a) =  \tableau{ 4 & 4 \\ 2 & 2 & 4 \\ 1 & 1 & 2 & 2   } \qquad \mbox{ and } \qquad \keytab(\rev(a))= \tableau{  4 & 4 \\ 3 & 3 & 3  \\ 1 & 1 & 1 & 3 }.\]
Here $\col(\keytab(a))$ is $421|421|42|2$ and $\col_R(\keytab(\rev(a)))$ is $3|31|431|431$, which is indeed equal to $\frev(\col(\keytab(a)))$ (column-breaks included for emphasis).
\end{ex}

The following lemma is fairly well-known~\cite[Appendix A.1]{Ful97}; we include a proof here for completeness and to illustrate the flip-and-reverse procedure.

\begin{lemma}\label{lem:knuthfrev}
Let $w,w'$ be words in $\{1,\ldots , n\}$. Then $w\sim w'$ if and only if $\frev(w)\sim \frev(w')$. 
\end{lemma}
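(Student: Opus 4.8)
The plan is to exploit the fact that Knuth equivalence is, by definition, the equivalence relation generated by the two elementary Knuth moves, and that $\frev$ is an involution. Since $\frev(\frev(w)) = w$ (because both the flip $i \mapsto n+1-i$ and the reversal are involutions, and they commute), it suffices to prove just one implication: that $w \sim w'$ forces $\frev(w) \sim \frev(w')$. Applying this to the pair $(\frev(w), \frev(w'))$ and using $\frev \circ \frev = \mathrm{id}$ then yields the reverse implication. Moreover, since $\sim$ is generated by single elementary moves, I only need to verify that whenever $w'$ is obtained from $w$ by one elementary Knuth move, $\frev(w)$ and $\frev(w')$ differ by a single elementary Knuth move.

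The core computation is to track how $\frev$ acts on each of the two move types. Write $f(a) = n+1-a$ and note that $f$ is order-reversing: $a \le b$ becomes $f(a) \ge f(b)$, and strict inequalities likewise reverse. For the first relation, ${\bf d}xzy{\bf e} \sim {\bf d}zxy{\bf e}$ with $x \le y < z$, I would compute that $\frev({\bf d}xzy{\bf e})$ contains the consecutive subword $f(y)\,f(z)\,f(x)$ (reversal moves the triple to the front-to-back order $y,z,x$, then flip is applied entrywise), while $\frev({\bf d}zxy{\bf e})$ contains $f(y)\,f(x)\,f(z)$ in the same position, with the surrounding factors $f(\rev({\bf e}))$ and $f(\rev({\bf d}))$ identical in both. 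Setting $X=f(x),Y=f(y),Z=f(z)$, the hypothesis $x \le y < z$ becomes $Z < Y \le X$, and the pair of triples is exactly $YZX$ versus $YXZ$ — which is an instance of the \emph{second} Knuth relation ${\bf d}yxz{\bf e} \sim {\bf d}yzx{\bf e}$ (with roles $b=Y$, $a=Z$, $c=X$ and condition $a < b \le c$). Symmetrically, I would check that $\frev$ carries the second relation to an instance of the first: the triples become $ZXY$ versus $XZY$ with $Z \le Y < X$, matching ${\bf d}xzy{\bf e} \sim {\bf d}zxy{\bf e}$. Thus $\frev$ interchanges the two move types while sending valid moves to valid moves.

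The one point requiring care — and the main obstacle — is the bookkeeping of the inequalities: the flip is order-reversing, so a weak inequality $x \le y$ becomes $f(x) \ge f(y)$ and the strict $y < z$ becomes $f(y) > f(z)$; it is precisely this reversal that swaps the two Knuth move types (strict and weak conditions land in the correct slots of the opposite relation). I would lay out the two cases explicitly as above, confirming in each that the transformed condition on $X,Y,Z$ matches the hypothesis required by the target relation. Having established that each elementary move maps to a single elementary move under $\frev$, the lemma follows by composing moves along any Knuth-equivalence chain and invoking the involution property for the converse.
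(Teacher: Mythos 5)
Your proof is correct and follows essentially the same route as the paper's: reduce to a single elementary Knuth move, check case-by-case that $\frev$ carries each of the two relations to an instance of the other (with the order-reversal of the flip exchanging the strict and weak inequalities exactly as needed), and deduce the converse from the involution property of $\frev$. Your case analysis is in fact slightly more carefully bookkept than the paper's, which transposes the roles of $\frev(w)$ and $\frev(w')$ in the second case (harmlessly, since the relation is symmetric).
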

\begin{proof}
It is enough to show this for the case that $w$ and $w'$ are related by a single Knuth move. For $x$ a letter in $w$, let $\overline{x}$ denote $n+1-x$. Suppose $w$ contains the sequence $\ldots xzy \ldots$ where $x\le y < z$. Then one may perform a Knuth move to obtain $w' = \ldots zxy \ldots$. In $\frev(w)$ we have $\ldots \overline{y}\overline{z}\overline{x} \ldots$ where $\overline{z}<\overline{y}\le \overline{x}$. Then one may perform a Knuth move to obtain the word $\ldots \overline{y}\overline{x}\overline{z} \ldots$, which is indeed $\frev(w')$. Now suppose $w$ contains the sequence $\ldots yxz \ldots$ where $x< y \le z$. Then one may perform a Knuth move to obtain $w' = \ldots yzx \ldots$. In $\frev(w)$ we have $\ldots \overline{x}\overline{z}\overline{y} \ldots$ where $\overline{z}\le\overline{y}< \overline{x}$. Then one may perform a Knuth move to obtain the word $\ldots \overline{z}\overline{x}\overline{y} \ldots$, which is indeed $\frev(w')$.

Therefore, $\frev(w)\sim \frev(w')$ whenever $w\sim w'$. The converse direction is immediate from the fact that $\frev$ is an involution.
\end{proof}

\begin{proposition}\label{prop:Knutheq}
Let $a$ be a weak composition. Then $\col(\keytab(a))$ is Knuth-equivalent to $\col_R(\keytab(a))$.
\end{proposition}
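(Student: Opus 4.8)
The plan is to prove the stronger statement that $\col(\keytab(a))$ and $\col_R(\keytab(a))$ have the \emph{same} Schensted insertion tableau, namely $\keytab(a)$ itself; since two words are Knuth-equivalent exactly when they share an insertion tableau, this yields the proposition. Writing $c_j$ for the word obtained by reading the $j$th column of $\keytab(a)$ from top to bottom, each $c_j$ is strictly decreasing, and directly from the definitions of $\col$ and $\col_R$ we have $\col(\keytab(a)) = c_1 c_2 \cdots c_k$ while $\col_R(\keytab(a)) = c_k \cdots c_2 c_1$. That $\col(\keytab(a))$ inserts to $\keytab(a)$ is the standard fact that the left-to-right column reading word of any $\SSYT$ rectifies to that tableau (the same fact that makes the right/left key construction well defined). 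Thus everything reduces to reversing the order of the blocks $c_1,\ldots,c_k$ without leaving the Knuth class.

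First I would record the input that makes keys special: since $\keytab(a)$ is a key, its column entry-sets are totally nested, $S_1 \supseteq S_2 \supseteq \cdots \supseteq S_k$. The crux is then the following commutation lemma: if $A \supseteq B$ are finite sets of positive integers with strictly decreasing column words $c_A$ and $c_B$, then $c_A c_B \sim c_B c_A$. I would prove this by showing both concatenations insert to the two-column key with left column $A$ and right column $B$. The product $c_A c_B$ is exactly the left-to-right column word of that key, so it rectifies to it by the standard fact above; for $c_B c_A$ one first inserts $c_B$ to obtain the single column $B$ and then inserts the decreasing word $c_A$, checking that because every entry of $B$ already lies in $A$, the insertion grows the column $A$ on the left and pushes $B$ one step to the right, producing the same two-column key $A \mid B$.

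With the commutation lemma in hand, the assembly is formal. Because the column sets of a key form a chain, any two blocks $c_i, c_j$ are nested and hence Knuth-commute. I would reverse the block order by induction on $k$: move $c_k$ to the front past each of $c_{k-1}, \ldots, c_1$ in turn (each step a single application of the commutation lemma), obtaining $c_k c_1 c_2 \cdots c_{k-1}$, and then apply the inductive hypothesis to reverse $c_1 \cdots c_{k-1}$ into $c_{k-1} \cdots c_1$. This produces $c_k c_{k-1} \cdots c_1 = \col_R(\keytab(a))$, as required.

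The main obstacle is the commutation lemma, and within it the claim that row-inserting the decreasing word $c_A$ into the column $B \subseteq A$ reconstructs precisely the two-column key $A \mid B$. The containment of entry-sets is automatic, but verifying that the insertion path of each successive letter of $c_A$ behaves as asserted, never disturbing the already-placed copy of $B$ and exactly rebuilding the column $A$, requires care with the bumping routine; I expect this bookkeeping, rather than the formal reversal, to be where the real work lies. An alternative that isolates the difficulty slightly differently is to prove the atomic statement $c_A\, b \sim b\, c_A$ for a single letter $b \in A$ and then slide the letters of $c_B$ leftward one at a time, but this trades the multi-column insertion for an equally delicate single-letter insertion computation.
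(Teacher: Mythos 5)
Your top-level architecture is valid and genuinely different from the paper's. The paper proves the statement directly: by induction on the number of columns it shows that $\col_R(\keytab(a))$ row-inserts to $\keytab(a)$, analyzing the insertion of each column word (letters inserted largest to smallest) into the key formed by the columns to its right. You instead reduce to the two-column case: since Knuth equivalence is a congruence on the free monoid, your commutation lemma for nested column sets, applied to adjacent blocks, together with induction on the number of blocks, reverses the block order. That reduction is correct, and the commutation lemma itself is true --- it is precisely the proposition restricted to two-column keys.

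The gap is where you predicted it, but it is worse than bookkeeping: the insertion dynamics your sketch relies on are false, not merely unverified. Take $A=\{1,2,3,4\}$ and $B=\{2,4\}$, so $c_A=4321$ and $c_B=42$. Inserting $c_B$ gives the column with rows $[2],[4]$ (bottom to top); inserting the letters $4,3$ of $c_A$ then yields the tableau with rows $[2,3]$ and $[4,4]$. At this intermediate stage the \emph{first} column is still $B$ and the second column holds the letters of $A$ inserted so far; nothing like ``$A$ grows on the left while $B$ is pushed one step right'' has occurred. Moreover the already-placed copy of $B$ \emph{is} disturbed: continuing, the $4$ of $B$ is bumped from row $2$ to row $3$ and then to row $4$, and the $2$ of $B$ is bumped out of row $1$. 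So the invariant your sketch tracks does not hold of the intermediate tableaux. What does work is a cascade argument: because $B\subseteq A$, the tableau never contains an entry strictly between two consecutive letters $a_j<a_{j-1}$ of $c_A$, so inserting $a_j$ bumps a copy of $a_{j-1}$ out of row $1$, which bumps a copy of $a_{j-2}$ out of row $2$, and so on, a copy of $a_1$ coming to rest in a new box at the end of row $j$. The net effect over all insertions is that row $r$ gains exactly the $r$-th smallest element of $A$, so each row of the final tableau has the same multiset of entries as the corresponding row of the key $A\mid B$; since a semistandard Young tableau is determined by its row multisets, the insertion tableau of $c_Bc_A$ is $A\mid B$, proving the lemma. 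This cascade is exactly the paper's argument --- except the paper runs it with an arbitrary key $T'$, all of whose entries occur among the letters being inserted, in place of your single column $B$, which disposes of every column in one induction and makes the commutation-and-congruence scaffolding unnecessary.
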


\begin{proof}
It suffices to show that the word $\col_R(\keytab(a))$ inserts to $\keytab(a)$.

Suppose $T$ is a key. Let $w$ be a word that contains the entries in the leftmost column of $T$ and let $T'$ be the key obtained by removing the leftmost column of $T$.  We will show that inserting the entries of $w$ into $T'$ in order from largest to smallest yields another key, namely $T'$ with the column whose entries are the entries of $w$ adjoined on the left.  This is the key $T$ and the conclusion then follows by induction, the base case where $T$ is empty being trivial.

We will establish that insertion of the $i$th entry of $w$ causes (a copy of) the $(i-1)$th entry of $w$ to be bumped from the first into the second row, the $(i-2)$nd entry of $w$ to be bumped from the 2nd to the 3rd row, etc, culminating in the first entry of $w$ arriving at the end of the $i$th row. This is clearly true for $i=1$, as the largest entry of $w$ is weakly larger than any entry of $T$ (due to the key condition) so it is inserted at the end of the first row. Suppose this is true for all entries up to the $(i-1)$th entry of $w$. Now, when the $i$th entry of $w$ is inserted, it bumps (a copy of) the $(i-1)$th entry of $w$ from row $1$, since there is no entry $x$ in the tableau such that $w_i<x<w_{i-1}$ by the key condition. Then the $(i-1)$th entry of $w$ must bump (a copy of) the $(i-2)$nd entry of $w$ (which is in row $2$ by the inductive hypothesis), since again there is no entry $y$ in the tableau such that $w_{i-1}<y<w_{i-2}$ by the key condition. Continuing thus, $w_1$ is eventually bumped into row $i$, and comes to rest at the end of row $i$ since it is weakly larger than any other entry in the tableau. 

Hence the insertion process results in a new entry $w_i$ in each row $|w|+1-i$. There is a unique such semistandard Young tableau, and by the key condition each entry $w_i$ (or a copy of this entry) must appear as the first entry of row $|w|+1-i$ for every $i$. Therefore the result is $T'$ with the column determined by $w$ appended, as required.
\end{proof}

We now give a formula for Young key polynomials in terms of compatible sequences.

\begin{theorem}\label{thm:ykeycompatible}
Let $a$ be a weak composition of length $n$. Then
$$\ykey_a = \sum_{f(c) \sim \col(\keytab(a)), \, w \textrm{ is $c$-compatible}} x^{\comp(f(w))}.$$
\end{theorem}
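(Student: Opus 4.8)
The plan is to leverage the flip-and-reverse relationship already established for key polynomials, combined with the existing compatible-sequence formula for key polynomials (Theorem~\ref{thm:keycompatible}), rather than reconstructing the entire argument from scratch. The fundamental identity at our disposal is~(\ref{eqn:keyyoungkey}), namely $\ykey_a(x_1,\ldots,x_n) = \key_{\rev(a)}(x_n,\ldots,x_1)$, which tells us that the Young key polynomial is simply the reverse key polynomial with variables reversed. So the strategy is to start from Theorem~\ref{thm:keycompatible} applied to $\rev(a)$, expressing $\key_{\rev(a)}$ as a sum over words $b$ with $\rev(b)\sim\col(\keytab(\rev(a)))$ together with $b$-compatible sequences $w$, and then track carefully what the variable reversal $x_i\mapsto x_{n+1-i}$ does to the exponents.

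First I would write out $\key_{\rev(a)} = \sum x^{\comp(w)}$ where the sum is over $b$ with $\rev(b)\sim\col(\keytab(\rev(a)))$ and $w$ being $b$-compatible. Reversing variables sends the monomial $x^{\comp(w)}$ to the monomial whose exponent vector is $\comp(w)$ read backwards, which is exactly $\comp(f(w))$ since flipping each letter of $w$ (sending $j\mapsto n+1-j$) permutes the coordinate counts by $i\mapsto n+1-i$. This identifies the monomial contributions on both sides. The remaining task is a bijective matching of indexing data: I need to show that the pairs $(b,w)$ indexing $\key_{\rev(a)}$ correspond to pairs $(c,w)$ with $f(c)\sim\col(\keytab(a))$ and $w$ being $c$-compatible, under the correct transformations.

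The key combinatorial inputs for this matching are the two lemmas already proved. Lemma~\ref{lem:frevrighttoleft} gives $\frev(\col(\keytab(a)))=\col_R(\keytab(\rev(a)))$, and Proposition~\ref{prop:Knutheq} gives that $\col_R(\keytab(\rev(a)))\sim\col(\keytab(\rev(a)))$. Chaining these, $\frev(\col(\keytab(a)))\sim\col(\keytab(\rev(a)))$. Then the condition $\rev(b)\sim\col(\keytab(\rev(a)))$ from the reverse formula can be rewritten: setting $c=f(b)$ so that $b=f(c)$ and $\rev(b)=\rev(f(c))=\frev(c)$, and using Lemma~\ref{lem:knuthfrev} (that $\frev$ preserves Knuth equivalence), the condition $\rev(b)\sim\col(\keytab(\rev(a)))$ becomes $\frev(c)\sim\frev(\col(\keytab(a)))$, which by Lemma~\ref{lem:knuthfrev} is equivalent to $c\sim\col(\keytab(a))$. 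I should double-check the precise form of the indexing condition I am targeting, since the theorem statement writes $f(c)\sim\col(\keytab(a))$ rather than $c\sim\col(\keytab(a))$; the correct substitution and the exact role of $f$ versus $\frev$ in the flag/compatibility bookkeeping is where I must be most careful.

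The main obstacle I anticipate is verifying that the compatibility conditions transport correctly under these transformations, and in particular confirming that the $b$-compatible sequences $w$ indexing the reverse formula are precisely the $c$-compatible sequences required for the Young formula, with the same $w$ producing the monomial $x^{\comp(f(w))}$. The three compatibility conditions (weak increase, strict increase across descents of the indexing word, and the flag bound $w_k\le b_k$) interact nontrivially with flipping and reversing, so I expect the delicate step to be checking that no spurious compatible sequences are introduced or lost and that the flag condition is respected under the correspondence $b\leftrightarrow c$. Once the indexing bijection and the monomial identification are both pinned down, the theorem follows by substituting into~(\ref{eqn:keyyoungkey}) and Theorem~\ref{thm:keycompatible}.
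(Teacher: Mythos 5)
Your overall route is the paper's route: combine~(\ref{eqn:keyyoungkey}) with Theorem~\ref{thm:keycompatible} applied to $\rev(a)$, use Lemma~\ref{lem:frevrighttoleft}, Proposition~\ref{prop:Knutheq} and Lemma~\ref{lem:knuthfrev} to match the indexing conditions, and your monomial bookkeeping (variable reversal sends $x^{\comp(w)}$ to $x^{\comp(f(w))}$) is correct. However, the substitution $c=f(b)$ is the wrong identification, and the ``delicate step'' you defer is exactly where the plan breaks. Under $c=f(b)$ the Knuth condition does become $c\sim\col(\keytab(a))$, as you compute, but this does not match the theorem's condition $f(c)\sim\col(\keytab(a))$; worse, the compatibility data does not transport: a $b$-compatible sequence is not in general $f(b)$-compatible. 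Flipping turns every ascent $b_k<b_{k+1}$ into a descent of $f(b)$, and the flag condition $w_k\le b_k$ bears no relation to $w_k\le n+1-b_k$ (for instance, with $n=2$, the word $w=12$ is $12$-compatible but not $21$-compatible). So the step you hoped to verify --- that the $b$-compatible sequences are precisely the $c$-compatible sequences under your correspondence --- is false, not merely delicate.

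The repair is simpler than what you set up: take $c=b$, the identity identification. The chain $\col(\keytab(\rev(a)))\sim\col_R(\keytab(\rev(a)))=\frev(\col(\keytab(a)))$ (Proposition~\ref{prop:Knutheq} and Lemma~\ref{lem:frevrighttoleft}), together with Lemma~\ref{lem:knuthfrev} and the fact that $\frev$ is an involution with $\frev(\rev(b))=f(b)$, gives
\[
\rev(b)\sim\col(\keytab(\rev(a))) \iff f(b)\sim\col(\keytab(a)),
\]
so the theorem's indexing condition $f(c)\sim\col(\keytab(a))$ is literally the condition of Theorem~\ref{thm:keycompatible} for $\key_{\rev(a)}$ with $c=b$, paired with the very same compatible sequences $w$. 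The statement places the flip $f$ inside the Knuth condition precisely so that the compatibility requirement can be left untouched; no transport of compatible sequences is needed, and the theorem then follows immediately from the variable reversal in~(\ref{eqn:keyyoungkey}), which converts each $x^{\comp(w)}$ into $x^{\comp(f(w))}$. This is exactly how the paper's proof proceeds.
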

\begin{proof}
The set $X$ of words Knuth-equivalent to $\col(\keytab(\rev(a)))$ is equal to the set of words Knuth-equivalent to $\col_R(\keytab(\rev(a)))$ by Proposition~\ref{prop:Knutheq}, which is equal to the set of words Knuth-equivalent to $\frev(\col(\keytab(a)))$ by Lemma~\ref{lem:frevrighttoleft}. Then by Lemma~\ref{lem:knuthfrev}, the flip-reverses of the words in $X$ form the set $Y$ of words Knuth-equivalent to $\col(\keytab(a))$. Since $Y = \{\frev(x) : x\in X\}$, we have $\{f(y):y\in Y\} = \{\rev(x): x\in X\}$. By Theorem~\ref{thm:keycompatible}, $\key_{\rev(a)}(x_1,\ldots , x_n)$ is generated by the compatible sequences for $\{\rev(x): x\in X\}$, and thus also generated by the compatible sequences for $\{f(y):y\in Y\}$. Since $\ykey_a(x_n, \ldots , x_1) = \key_{\rev(a)}(x_1, \ldots , x_n)$, the compatible sequences for $\{f(y):y\in Y\}$ generate $\ykey_a(x_n, \ldots , x_1)$, i.e., 
$$\ykey_a(x_n, \ldots , x_1) = \sum_{f(c) \sim \col(\keytab(a)), \, w \textrm{ is $c$-compatible}} x^{\comp(w)}.$$ 
Finally, flipping each compatible sequence in the formula above yields $\ykey_a(x_1, \ldots , x_n)$.
\end{proof}

\begin{ex}\label{ex:ykeyrevcompatible}
Let $a=230$. Then $\keytab(a) = \tableau{  2  & 2 \\ 1 & 1 & 2 }$; its column word is $21212$. The set of words Knuth-equivalent to $21212$ is $\{22121, 22211, 21221, 21212, 22112\}$. 
We compute the set of compatible sequences for the flips of each of these words.

\begin{figure}[h]
\begin{tabular}{l | l | l | l}
{\bf Word} & {\bf flip} & {\bf Compatible sequences} & {\bf Flips of Compatible sequences} \\\hline
22121 & 22323 & 11223 & 33221 \\\hline
22211 & 22233 & 11122 \, 11123 \, 11133 & 33322 \, 33321 \, 33311 \\
 & &   11233 \, 12233 \, 22233 &   33211 \, 32211 \, 22211 \\\hline
21221 & 23223 & 12223  & 32221  \\\hline
21212 & 23232 & & \\\hline
22112 & 22332 & 11222 & 33222  \\
\end{tabular}
\caption{Compatible sequences and their flips \label{fig:revcompseq}}
\end{figure}


\end{ex}

The corresponding monomials indeed sum up to $\ykey_{230}$; compare this example to Example~\ref{ex:keycompatible} computing $\key_{032}$ in terms of compatible sequences.

\subsection{Divided differences and Demazure crystals}
Young key polynomials may also be described in terms of divided difference operators. Given a weak composition $a$, let $\revsort(a)$ be the rearrangement of $a$ into increasing order. Let $\hat{w}_a$ be the permutation of shortest length rearranging $a$ to $\revsort(a)$. For $1 \le i < n$ define an operator
\[\hat{\pi}_i = -\partial_i x_{i+1},\]
and for a permutation $w$, define $\hat{\pi}_w = \hat{\pi}_1\cdots \hat{\pi}_r$, where $s_1\cdots s_r$ is any reduced word for $w$.

\begin{lemma}\label{lem:reversediff}
Let $f$ be a polynomial in $\mathbb{Z}[x_1,\ldots , x_n]$. We have
\[I({\pi}_i f) = \hat{\pi}_{n-i} I(f)\]
where $I(f)$ is the polynomial obtained by exchanging variables $x_j\leftrightarrow x_{n+1-j}$.
\end{lemma}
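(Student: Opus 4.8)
The plan is to prove the operator identity $I \circ \pi_i = \hat{\pi}_{n-i} \circ I$ by direct computation, tracking how the variable-exchange involution $I$ interacts with the divided difference $\partial_i$ and with multiplication by a variable. First I would record how $I$ conjugates the basic symmetric-group action: since $I$ exchanges $x_j \leftrightarrow x_{n+1-j}$, conjugating the transposition $s_i$ (which swaps $x_i$ and $x_{i+1}$) by $I$ yields the transposition swapping $x_{n+1-i}$ and $x_{n-i}$, i.e.\ $I \circ s_i = s_{n-i} \circ I$ as operators on polynomials. This is the computational heart of the argument and I expect it to be the main (though modest) obstacle: one must be careful that the index shift is exactly $i \mapsto n-i$ and not $i \mapsto n+1-i$, because $\partial_i$ uses the \emph{adjacent} pair $(x_i, x_{i+1})$, and $I$ sends this pair to $(x_{n+1-i}, x_{n-i})$, whose associated adjacent transposition is $s_{n-i}$.

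Next I would use this to compute $I(\partial_i f)$. Writing $\partial_i f = \dfrac{f - s_i f}{x_i - x_{i+1}}$ and applying $I$, the numerator becomes $I(f) - I(s_i f) = I(f) - s_{n-i} I(f)$ by the conjugation relation, while the denominator $x_i - x_{i+1}$ maps to $x_{n+1-i} - x_{n-i} = -(x_{n-i} - x_{n-i+1})$. Hence
\[
I(\partial_i f) = \frac{I(f) - s_{n-i}I(f)}{-(x_{n-i} - x_{n-i+1})} = -\partial_{n-i} I(f).
\]
Once this sign-adjusted relation for $\partial_i$ is established, the rest is bookkeeping.

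Finally I would assemble the statement for $\pi_i$. By definition $\pi_i f = \partial_i(x_i f)$, so $I(\pi_i f) = I(\partial_i(x_i f)) = -\partial_{n-i} I(x_i f)$ by the previous step. Since $I$ is multiplicative and sends $x_i$ to $x_{n+1-i}$, we have $I(x_i f) = x_{n+1-i} I(f)$, giving
\[
I(\pi_i f) = -\partial_{n-i}\bigl(x_{n+1-i} I(f)\bigr) = -\partial_{n-i}\, x_{(n-i)+1}\, I(f) = \hat{\pi}_{n-i} I(f),
\]
where the last equality is exactly the definition $\hat{\pi}_{n-i} = -\partial_{n-i} x_{(n-i)+1}$. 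The one point requiring care is matching the subscript on $x$: we need $x_{n+1-i}$ to equal $x_{(n-i)+1}$, which holds identically, so the multiplier is precisely the one appearing in the definition of $\hat{\pi}_{n-i}$. This completes the proof, and the only genuinely delicate step is verifying the index arithmetic in the conjugation $I \circ s_i = s_{n-i} \circ I$.
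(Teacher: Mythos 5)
Your proof is correct, but it takes a genuinely different route from the paper's. The paper reduces to monomials by linearity and then computes both $I(\pi_i x^b)$ and $\hat{\pi}_{n-i}I(x^b)$ explicitly as rational expressions, checking that they coincide. You instead argue at the operator level: you first establish the conjugation relation $I\circ s_i = s_{n-i}\circ I$, deduce from it (using that $I$ is a ring automorphism, so it may be applied to numerator and denominator separately in the exact quotient defining $\partial_i$) the sign-adjusted relation $I\circ \partial_i = -\partial_{n-i}\circ I$, and then obtain the claim for $\pi_i = \partial_i x_i$ from multiplicativity of $I$, since $I(x_i f) = x_{n+1-i}I(f)$ and $x_{n+1-i}=x_{(n-i)+1}$ is exactly the multiplier in the definition $\hat{\pi}_{n-i} = -\partial_{n-i}x_{(n-i)+1}$. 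All the index arithmetic you flag as delicate does check out: $I$ sends the adjacent pair $(x_i,x_{i+1})$ to $(x_{n+1-i},x_{n-i})$, whose associated adjacent transposition is indeed $s_{n-i}$. It is worth noting that your intermediate identity $I(\partial_i f) = -\partial_{n-i}I(f)$ is precisely what the paper proves separately later as Lemma~\ref{lem:Ipartial} (again by a monomial computation) for use with Young Schubert polynomials; so your structural argument has the added benefit of unifying the two lemmas, as both Lemma~\ref{lem:reversediff} and Lemma~\ref{lem:Ipartial} follow immediately once the conjugation relations are in hand. What the paper's computation buys is complete explicitness on monomials; what yours buys is brevity and the conceptual point that everything follows from how $I$ conjugates the $S_n$-action on variables.
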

\begin{proof}
By linearity, it suffices to show this is true for a monomial $f=x^b$, where $b$ is a weak composition of length $n$. We compute 
\begin{align*}
I(\pi_ix^b) & = I\left(\frac{x_1^{b_1} \cdots  x_i^{b_i+1} x_{i+1}^{b_{i+1}} \cdots x_n^{b_n} - x_1^{b_1} \cdots  x_i^{b_{i+1}} x_{i+1}^{b_{i}+1} \cdots x_n^{b_n}}{x_i-x_{i+1}}\right) \\ 
                & = \frac{x_n^{b_1} \cdots  x_{n+1-i}^{b_i+1} x_{n-i}^{b_{i+1}} \cdots x_1^{b_n} - x_n^{b_1} \cdots  x_{n+1-i}^{b_{i+1}} x_{n-i}^{b_{i}+1} \cdots x_1^{b_n}}{x_{n+1-i}-x_{n-i}}
\end{align*}
and
\begin{align*}
\hat{\pi}_{n-i} I (x^b) & = \hat{\pi}_{n-i} (x_1^{b_n} \cdots  x_{n-i}^{b_{i+1}} x_{n+1-i}^{b_i} \cdots x_n^{b_1}) \\ 
                                     & = \frac{x_1^{b_n} \cdots  x_{n-i}^{b_{i+1}} x_{n+1-i}^{b_i+1} \cdots x_n^{b_1} -x_1^{b_n} \cdots  x_{n-i}^{b_i+1} x_{n+1-i}^{b_{i+1}}  \cdots x_n^{b_1}}{x_{n+1-i}-x_{n-i}}
\end{align*}
as required.
\end{proof}

\begin{lemma}
$\hat{\pi}_w$ is well-defined.
\end{lemma}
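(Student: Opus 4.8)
The plan is to leverage Lemma~\ref{lem:reversediff}, which relates $\hat{\pi}_i$ to the classical Demazure operator $\pi_i$ through the variable-swapping involution $I$, together with the already-established fact (noted in Section~\ref{sec:keycrystal}) that the $\pi_i$ satisfy the braid and commutation relations, so that $\pi_w$ is well-defined. First I would reinterpret Lemma~\ref{lem:reversediff} as an operator identity. Since $I$ is an involution (exchanging $x_j \leftrightarrow x_{n+1-j}$ twice is the identity), the relation $I(\pi_i f) = \hat{\pi}_{n-i} I(f)$, valid for all $f$, rearranges to the operator equation $\hat{\pi}_{n-i} = I \, \pi_i \, I$, or equivalently $\hat{\pi}_j = I \, \pi_{n-j} \, I$ for $1 \le j < n$.

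Next, I would fix a permutation $w$ and an arbitrary reduced word $s_{i_1} \cdots s_{i_r}$ for it, and expand the candidate definition $\hat{\pi}_{i_1} \cdots \hat{\pi}_{i_r}$ using this conjugation identity. Each factor contributes $I \pi_{n-i_k} I$, and the internal $I \cdot I$ pairs cancel, leaving $I \, (\pi_{n-i_1} \pi_{n-i_2} \cdots \pi_{n-i_r}) \, I$. Thus well-definedness of $\hat{\pi}_w$ reduces to showing that $\pi_{n-i_1} \cdots \pi_{n-i_r}$ depends only on $w$ and not on the chosen reduced word.

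For this I would invoke the type-$A$ diagram automorphism of $S_n$: conjugation by the longest element $w_0$ (the permutation $i \mapsto n+1-i$) sends each simple transposition $s_i$ to $s_{n-i}$, and since it permutes the set of simple generators it is an automorphism of the Coxeter system and hence length-preserving. Consequently $s_{n-i_1} \cdots s_{n-i_r}$ is again a reduced word, now for the permutation $w_0 w w_0$. By the well-definedness of the classical operators $\pi$, the product $\pi_{n-i_1} \cdots \pi_{n-i_r}$ therefore equals $\pi_{w_0 w w_0}$ independent of the chosen reduced word, whence $\hat{\pi}_w = I \, \pi_{w_0 w w_0} \, I$ is well-defined.

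I expect the only genuine content to be the identification of $i \mapsto n-i$ with the length-preserving diagram automorphism; the remaining steps are formal manipulations with the involution $I$. An alternative, more hands-on route would be to verify the commutation relation $\hat{\pi}_i \hat{\pi}_j = \hat{\pi}_j \hat{\pi}_i$ for $|i-j|\ge 2$ and the braid relation $\hat{\pi}_i \hat{\pi}_{i+1} \hat{\pi}_i = \hat{\pi}_{i+1} \hat{\pi}_i \hat{\pi}_{i+1}$ directly and then apply Matsumoto's theorem, but transporting these relations from the $\pi_i$ via $\hat{\pi}_j = I \pi_{n-j} I$ makes the conjugation argument strictly more efficient.
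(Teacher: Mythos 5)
Your proof is correct, and its key ingredient --- the operator identity $\hat{\pi}_{n-i} = I\,\pi_i\,I$ extracted from Lemma~\ref{lem:reversediff} --- is exactly the one the paper uses. The packaging, however, differs: the paper's one-line proof is precisely your ``alternative'' route, namely transporting the commutation and braid relations from the $\pi_i$ to the $\hat{\pi}_i$ via conjugation by $I$ (noting that the relabelling $i \mapsto n-i$ preserves adjacency and non-adjacency of indices), after which well-definedness follows from Matsumoto's theorem exactly as it does for $\pi_w$. Your main argument instead conjugates whole products, $\hat{\pi}_{i_1}\cdots\hat{\pi}_{i_r} = I\,(\pi_{n-i_1}\cdots\pi_{n-i_r})\,I$, and appeals to the diagram automorphism $w \mapsto w_0 w w_0$ to conclude that $s_{n-i_1}\cdots s_{n-i_r}$ is a reduced word for $w_0 w w_0$, so the inner product equals $\pi_{w_0 w w_0}$ regardless of which reduced word for $w$ was chosen. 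This costs you one extra (standard) fact the paper's route does not need --- that conjugation by $w_0$ is a length-preserving automorphism of the Coxeter system --- but in exchange it yields the closed formula $\hat{\pi}_w = I\,\pi_{w_0 w w_0}\,I$, which is strictly more information than bare well-definedness. Notably, the reduced-word statement you invoke is exactly Lemma~\ref{lem:reducedwordfrev}, which the paper proves later in Section~\ref{Sec:Schubert}, and the same index-flip observation reappears in the proof of Theorem~\ref{thm:Youngkeydivideddifference}; your argument effectively front-loads that material, so either route is sound and the choice is a matter of economy versus reusability.
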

\begin{proof}
Since the $\pi_i$'s satisfy the commutativity and braid relations of $S_n$, it follows from Lemma~\ref{lem:reversediff} that the $\hat{\pi}_i$'s also do.
\end{proof}

\begin{theorem}\label{thm:Youngkeydivideddifference}
Let $a$ be a weak composition of length $n$. Then
$\ykey_a = \hat{\pi}_{\hat{w}_a}x^{\revsort(a)}.$
\end{theorem}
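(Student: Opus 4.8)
The plan is to reduce the claim to the already-established divided-difference formula for the (reverse) key polynomials, namely $\key_a = \pi_{w_a} x^{\sort(a)}$, by transporting it through the variable-reversing involution $I$. I would begin by recalling the defining relation (\ref{eqn:keyyoungkey}), which says $\ykey_a(x_1,\ldots,x_n) = \key_{\rev(a)}(x_n,\ldots,x_1)$; in the notation of Lemma~\ref{lem:reversediff} this is precisely $\ykey_a = I(\key_{\rev(a)})$, since $I$ exchanges $x_j \leftrightarrow x_{n+1-j}$.

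Next I would apply $I$ to the reverse divided-difference formula. Writing $b = \rev(a)$, we have $\key_b = \pi_{w_b} x^{\sort(b)}$, where $w_b$ is the shortest permutation sending $b$ to $\sort(b)$. Applying $I$ and using Lemma~\ref{lem:reversediff} repeatedly, each factor $\pi_i$ in a reduced word for $w_b$ is converted into $\hat\pi_{n-i}$ acting after $I$. The first main step is therefore to track how a whole reduced word $w_b = s_{i_1}\cdots s_{i_r}$ transforms: iterating $I(\pi_i f) = \hat\pi_{n-i} I(f)$ gives $I(\pi_{i_1}\cdots \pi_{i_r} x^{\sort(b)}) = \hat\pi_{n-i_1}\cdots \hat\pi_{n-i_r}\, I(x^{\sort(b)})$. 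I would observe that the map $s_i \mapsto s_{n-i}$ is the conjugation by the longest element $w_0$ (equivalently, the automorphism of $S_n$ reversing the Dynkin diagram), so $s_{n-i_1}\cdots s_{n-i_r}$ is a reduced word for $w_0 w_b w_0$; call this permutation $\hat w$.

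The second main step is the bookkeeping that identifies $\hat w$ with $\hat w_a$ and $I(x^{\sort(b)})$ with $x^{\revsort(a)}$. For the monomial: $I(x^{\sort(b)})$ reverses the exponent sequence, and since $\sort(b) = \sort(\rev(a)) = \sort(a)$ is decreasing, reversing it produces the increasing rearrangement $\revsort(a)$, giving $I(x^{\sort(b)}) = x^{\revsort(a)}$. For the permutation: I must check that $w_0 w_{\rev(a)} w_0$ is exactly $\hat w_a$, the shortest permutation sending $a$ to $\revsort(a)$. This is the step I expect to be the main obstacle — not because it is deep, but because it requires care with the right-action convention and the fact that reversing a composition corresponds to conjugating the sorting permutation by $w_0$. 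I would verify it by noting that $a = \rev(b)$ means $a = b \cdot w_0$ (reversal of entries is right-multiplication by the longest element), that $\revsort(a) = \rev(\sort(b)) = \sort(b)\cdot w_0$, and that $w_b$ being the shortest element with $b\,w_b = \sort(b)$ forces $w_0 w_b w_0$ to be the shortest element with $a\,(w_0 w_b w_0) = \revsort(a)$, because conjugation by $w_0$ is a length-preserving automorphism. Assembling these identifications yields $\ykey_a = \hat\pi_{\hat w_a} x^{\revsort(a)}$, as desired. The well-definedness of $\hat\pi_{\hat w_a}$ (independence of reduced word) is already secured by the preceding lemma, so no additional argument is needed there.
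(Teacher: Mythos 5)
Your proposal is correct and takes essentially the same route as the paper's proof: both transport the reverse formula $\key_b = \pi_{w_b}x^{\sort(b)}$ through the variable-reversing involution $I$ by iterating Lemma~\ref{lem:reversediff}, identify the resulting word $s_{n-i_1}\cdots s_{n-i_r}$ (i.e.\ the conjugate $w_0 w_b w_0$) with the minimal-length permutation $\hat{w}$ sorting the reversed composition increasingly, and match $I(x^{\sort(b)})$ with $x^{\revsort(a)}$. The only differences are cosmetic: the paper states the conjugated-reduced-word observation without justification and derives the identity for $\ykey_{\rev(a)}$ from $\key_a$, whereas you substitute $b=\rev(a)$ at the outset and spell out the length-preserving conjugation and right-action bookkeeping explicitly.
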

\begin{proof}
First observe that if $w_a=s_{i_1}\cdots s_{i_k}$ is the minimal length permutation sending $a$ to $\sort(a)$, then $s_{n-i_1}\cdots s_{n-i_k}$ is the minimal length permutation sending $\rev(a)$ to $\revsort(\rev(a))$, i.e., is $\hat{w}_{\rev(a)}$.

Therefore, by Lemma~\ref{lem:reversediff} and the fact that $I(x^{\sort(a)}) = x^{\revsort(a)}=x^{\revsort(\rev(a))}$, we have
\[\hat{\pi}_{\hat{w}_{\rev(a)}}x^{\revsort(\rev(a))} = \hat{\pi}_{{\hat{w}_{\rev(a)}}}I(x^{\sort(a)}) = I(\pi_{w_a}(x^{\sort(a)})) = I(\key_a) = \ykey_{\rev(a)}.\qedhere \]
\end{proof}

\begin{ex}
Let $a=230$. Then the minimal length permutation taking $a$ to $\revsort(a) = 023$ is $s_2 s_1$. We compute
\begin{align*}
\hat{\pi}_2\hat{\pi}_1 (x_2^2x_3^3) & = \hat{\pi}_2\frac{x_2^3x_3^3 - x_1^3x_3^3}{x_2-x_1} \\
                                              & = \hat{\pi}_2(x_2x_3^3+x_1x_2^2x_3^3 + x_1^3x_3^3) \\
                                              & = \frac{(x_2^2 x_3^4 - x_2^4x_3^2) + (x_1 x_2 x_3^4 - x_1 x_2^4 x_3) + (x_1^2 x_3^4 - x_1^2 x_2^4)}{x_3-x_2} \\
                                              & = x_2^2 x_3^3 + x_2^3 x_3^2 + x_1 x_2 x_3^3  + x_1 x_2^2 x_3^2 + x_1 x_2^3 x_3 + x_1^2 x_3^3 + x_1^2 x_2 x_3^2 + x_1^2 x_2^2 x_3 + x_1^2 x_2^3 \\
                                              & = \ykey_{230}.                                             
\end{align*}
\end{ex}

Recall the Demazure crystal structure for key polynomials described in Section~\ref{sec:keycrystal}. The Young key polynomials may be realized as characters of crystals that are obtained via Demazure truncations beginning from the \emph{lowest} weight of the crystal $B(\lambda)$ rather than the highest. For a subset $X$ of $B(\lambda)$, define $\hat{\mathfrak{D}}_iX = \{ b \in B(\lambda) | f_i^r(b) \in X  \textrm{ for some } r \ge 0 \}.$

\begin{theorem}
Let $a$ be a weak composition of length $n$ and let $w$ be of shortest length such that $w(a) = \revsort(a)$. Then the Young key polynomial $\ykey_a$ is the character of the subcrystal of $B(\sort(a))$ obtained by 
\[\hat{\mathfrak{D}}_{i_1}\cdots \hat{\mathfrak{D}}_{i_k}\{\hat{u}_\lambda\},\]
where $s_{i_1}\cdots s_{i_k}$ is a reduced word for $w$ and $\hat{u}_\lambda$ is the lowest weight element of $B(\lambda)$.
\end{theorem}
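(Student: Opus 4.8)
The plan is to realize the ``lowest weight'' truncation operators $\hat{\mathfrak{D}}_i$ as conjugates of the usual Demazure operators $\mathfrak{D}_{n-i}$ by the Lusztig involution on $B(\lambda)$, and then to reduce the claim to the Demazure crystal description of $\key_{\rev(a)}$ from Section~\ref{sec:keycrystal} via the identity~(\ref{eqn:keyyoungkey}). Let $\xi$ denote the Lusztig involution on the highest weight crystal $B(\lambda)$ (equivalently, the Sch\"utzenberger/evacuation involution on $\SSYT_n(\lambda)$); see~\cite{BS17}. The relevant properties are that $\xi$ is an involution with $\wt(\xi(b)) = \rev(\wt(b))$, that it intertwines the Kashiwara operators through the Dynkin diagram automorphism $i\mapsto n-i$, namely $\xi\circ f_i = e_{n-i}\circ \xi$, and that it carries the highest weight element $u_\lambda$ to the lowest weight element $\hat u_\lambda$.

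First I would establish the set-operator identity $\hat{\mathfrak{D}}_i(\xi(X)) = \xi(\mathfrak{D}_{n-i}(X))$ for every $X \subseteq B(\lambda)$. This is a direct unwinding of the definitions: $b\in\hat{\mathfrak{D}}_i(\xi(X))$ means $f_i^r(b)\in\xi(X)$ for some $r\ge 0$, which, using $\xi\circ f_i^r = e_{n-i}^r\circ\xi$ and that $\xi$ is an involution, is equivalent to $e_{n-i}^r(\xi(b))\in X$, i.e.\ to $\xi(b)\in\mathfrak{D}_{n-i}(X)$. Applying this identity repeatedly, starting from $\{\hat u_\lambda\} = \xi(\{u_\lambda\})$, gives
\[\hat{\mathfrak{D}}_{i_1}\cdots\hat{\mathfrak{D}}_{i_k}\{\hat u_\lambda\} = \xi\!\left(\mathfrak{D}_{n-i_1}\cdots\mathfrak{D}_{n-i_k}\{u_\lambda\}\right) = \xi\bigl(B_{w'}(\lambda)\bigr),\]
where $w' = s_{n-i_1}\cdots s_{n-i_k}$; this word is reduced of length $k$ since $w\mapsto w_0 w w_0$ is a length-preserving automorphism realizing $s_i\mapsto s_{n-i}$, so $B_{w'}(\lambda)$ is well-defined.

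To finish I would take characters and identify $w'$. Since $\xi$ reverses weights, $\ch(\xi(Y)) = \sum_{b\in Y} x^{\rev(\wt(b))} = I(\ch Y)$, where $I$ is the variable reversal of Lemma~\ref{lem:reversediff}; hence $\ch(\hat{\mathfrak{D}}_{i_1}\cdots\hat{\mathfrak{D}}_{i_k}\{\hat u_\lambda\}) = I(\ch B_{w'}(\lambda))$. Now the relationship established in the proof of Theorem~\ref{thm:Youngkeydivideddifference}, applied with $a$ replaced by $\rev(a)$, shows that conjugating a reduced word for $w_{\rev(a)}$ (the shortest permutation sending $\rev(a)$ to $\sort(a)$) by $i\mapsto n-i$ produces a reduced word for $w$, the shortest permutation sending $a$ to $\revsort(a)$. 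As $w\mapsto w_0ww_0$ is an involution, applying the same conjugation to $w$ recovers $w_{\rev(a)}$; that is, $w' = w_{\rev(a)}$. Since $\sort(\rev(a)) = \sort(a) = \lambda$, the Demazure crystal result of Section~\ref{sec:keycrystal} gives $\ch B_{w'}(\lambda) = \key_{\rev(a)}$, so
\[\ch\bigl(\hat{\mathfrak{D}}_{i_1}\cdots\hat{\mathfrak{D}}_{i_k}\{\hat u_\lambda\}\bigr) = I(\key_{\rev(a)}) = \key_{\rev(a)}(x_n,\ldots,x_1) = \ykey_a,\]
the last equality being~(\ref{eqn:keyyoungkey}).

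The iteration and the character computation are routine. The main obstacle is bookkeeping with the conjugation indices: one must invoke the correct Lusztig intertwiner for type $A_{n-1}$, so that the diagram automorphism $i\mapsto n-i$ simultaneously matches the conjugation $s_i\mapsto s_{n-i}$ appearing in Lemma~\ref{lem:reversediff} and the variable reversal $I$. Pinning down the precise form of $\xi$ and its intertwining relation, rather than a version differing by a reversal, is the step demanding the most care; everything else then follows formally.
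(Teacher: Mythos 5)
Your proof is correct and takes essentially the same route as the paper's: both conjugate the lowest-weight truncation operators $\hat{\mathfrak{D}}_i$ through the Lusztig involution into the ordinary Demazure operators $\mathfrak{D}_{n-i}$, identify the conjugated word as a reduced word for the minimal-length permutation sorting $\rev(a)$, and conclude via $\ykey_a = \key_{\rev(a)}(x_n,\ldots,x_1)$. Your write-up merely makes explicit the set-operator identity $\hat{\mathfrak{D}}_i(\xi(X)) = \xi(\mathfrak{D}_{n-i}(X))$ and the character bookkeeping that the paper leaves implicit.
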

\begin{proof}
Recall that the shortest permutation sending $\rev(a)$ to $\sort(\rev(a))$ is $s_{n-i_1}\cdots s_{n-i_k}$.  Performing the \emph{Lusztig involution} \cite{Lus10} $\star$ on $B(\lambda)$ exchanges each $f_i$ with $e_{n-i}$ and $e_i$ with $f_{n-i}$, and reverses the weight of each vertex. Hence, applying a Demazure truncation with $s_{n-i_1}\cdots s_{n-i_k}$ from the highest weight of $B(\lambda)^\star$ yields $\key_{\rev(a)}$ with variables reversed, which is equal to $\ykey_a$ by (\ref{eqn:keyyoungkey}). The statement follows.
\end{proof}

Observe that the repeated actions of the $\hat{\mathfrak{D}}_i$ starting with $\hat{u}_\lambda$ precisely mirrors the repeated action of the divided difference operators $\hat{\pi}_i$ starting with the monomial $x^{0^{n-\ell(\lambda)}\times \rev(\lambda)}$.

\begin{ex}\label{ex:youngcrystalkey}
Let $a=201$, and recall $B(21)$ from Figure~\ref{fig:Youngcrystalkey}. The shortest-length $w$ such that $w(a)=\revsort(a)$ is $w=s_1s_2$. Therefore, the crystal graph for $\ykey_{201}$ is the subgraph of $B(21)$ consisting of all vertices that can be obtained from the lowest weight $\tableau{3 \\ 2 & 3}$ by first applying a sequence of $e_2$'s and then a sequence of $e_1$'s. Hence $\ykey_{201} = x_2 x_3^2 + x_2^2 x_3 + x_1 x_3^2 + x_1 x_2 x_3 + x_1^2 x_3$.
\end{ex}

\subsection{Young key polynomials as generators for left keys}{\label{sec:leftkeys}}
Recall Theorem~\ref{thm:rightkey} states that a key polynomial can be described as the generating function for the set of all $\SSYT$ with bounded right key.  In this section we provide an analogous description of Young key polynomials as well as the corresponding description of Young Demazure atoms.

Given a semistandard Young tableau $T$, let $\frev(T)$ denote the filling obtained by flipping all entries in $T$ and reversing the order of the resulting column entries.  Compare this to the definition of $\frev$ applied to a word at the beginning of Section~\ref{sec:compatibleseqs}.  It is a straightforward observation that when $T$ is a key, $\frev(T)$ is the key whose entries in each column are the flip-reverses of the entries in the corresponding column of $T$. (However, if $T$ is not a key then $\frev(T)$ is not necessarily even a semistandard Young tableau.)

We need to establish a weight-reversing bijection between semistandard Young tableaux with a given right key $U$ and semistandard Young tableaux with left key $\frev(U)$. This is done in the following lemma, which can also be understood in terms of the \emph{evacuation} operation on semistandard Young tableaux. Recall that a word $w$ is Knuth equivalent to a semistandard Young tableau $T$ if and only if Schensted insertion of the word $w$ produces the tableau $T$.  

\begin{lemma}\label{lem:rightkeyleftkey}
Let $T$ be a semistandard Young tableau. Then the left key of the tableau obtained via Schensted insertion of $\frev(\col(T))$ is $\frev(K_+(T))$.
\end{lemma}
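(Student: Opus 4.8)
The plan is to work directly with the column-frank word characterization of left and right keys recalled before Theorem~\ref{thm:rightkey}, rather than through evacuation. Write $v=\col(T)$, set $w=\frev(v)$, and let $P$ be the $\SSYT$ obtained by Schensted insertion of $w$. Two ingredients are needed at the outset. First, $P$ has the same shape $\lambda$ as $T$: the reverse-complement operation $\frev$ preserves the insertion shape (see~\cite[Appendix A.1]{Ful97}), and since Knuth equivalence preserves shape, the same holds for every word Knuth equivalent to $v$. Second, by Lemma~\ref{lem:knuthfrev} the map $\frev$ descends to a bijection on Knuth classes, so $u\sim v$ if and only if $\frev(u)\sim w\sim \col(P)$.

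The heart of the argument is to track the column word factorization under $\frev$. I would show that if $u=u^{(1)}u^{(2)}\cdots u^{(r)}$ is the column word factorization of $u$ (so each $u^{(i)}$ is strictly decreasing and consecutive blocks are separated by a weak ascent), then the column word factorization of $\frev(u)$ is exactly $\frev(u^{(r)})\,\frev(u^{(r-1)})\cdots\frev(u^{(1)})$. Each $\frev(u^{(i)})$ is again strictly decreasing, so no break occurs inside a block; the content of the check is that a weak ascent appears between consecutive blocks $\frev(u^{(i+1)})$ and $\frev(u^{(i)})$, which follows because the boundary inequality (last letter of $u^{(i)}$) $\le$ (first letter of $u^{(i+1)}$) is reversed by the flip and the blocks themselves are reversed by $\rev$. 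Consequently $\colform(\frev(u))$ is the reversal of $\colform(u)$, hence still a rearrangement of the nonzero parts of $\lambda'$, so $u$ is column-frank if and only if $\frev(u)$ is, and the \emph{first} block of $\frev(u)$ is precisely the flip-reverse of the \emph{last} block of $u$.

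With this in hand the conclusion follows modulo bookkeeping. For each column length $\ell=\lambda'_j$, I would choose a column-frank word $u\sim v$ whose last block $u^{(r)}$ has length $\ell$; by definition $u^{(r)}$ (read top to bottom) is the corresponding column of $K_+(T)$. Then $\frev(u)$ is a column-frank word Knuth equivalent to $\col(P)$ whose first block $\frev(u^{(r)})$ has length $\ell$, so this block is the corresponding column of $K_-(P)$, and matching the top-to-bottom reading convention for $\col$ shows that $\frev$ on the block agrees with $\frev$ applied to the column of the key. Since columns of equal length in a key are forced to be identical, this determines every column and yields $K_-(P)=\frev(K_+(T))$. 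The main obstacle I anticipate is the combinatorial verification in the middle paragraph, namely confirming that the factorization breaks transfer correctly across block boundaries under flip-reverse while keeping the top-to-bottom reading conventions aligned so that $\frev$ on words matches $\frev$ on tableau columns; the shape-preservation input, though standard, must also be invoked carefully to license reading off the left key of $P$ using the lengths $\lambda'_j$.
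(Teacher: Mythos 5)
Your proposal is correct and follows essentially the same route as the paper's proof: both arguments work directly with the column-frank word characterization, observe that the column word factorization (and hence $\colform$) reverses under $\frev$ so that the leftmost block of $\frev(w')$ is the flip-reverse of the rightmost block of $w'$, and invoke Lemma~\ref{lem:knuthfrev} to transfer Knuth equivalence to the inserted tableau. Your write-up simply makes explicit two details the paper leaves implicit (the verification of the weak-ascent breaks at block boundaries, and the preservation of the insertion shape under $\frev$), so it is a more detailed version of the same argument.
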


\begin{proof}
Let $T$ have shape $\lambda$ and let $U$ be the semistandard Young tableau obtained by Schensted insertion of $\frev(\col(T))$.  Consider any column index $j$. Consider any word $w'$ that is Knuth equivalent to $\col(T)$, has column form a rearrangement of $\lambda'$, and whose rightmost maximal decreasing subsequence has length $\lambda_j'$. Then the entries in column $j$ of $K_+(T)$ are the entries of the rightmost maximal decreasing subsequence of $w'$. Now, the column form of $\frev(w')$ is the reversal of the column form of $w'$ (thus also a rearrangement of $\lambda'$), and Lemma~\ref{lem:knuthfrev} implies that $\frev(\col(T))$ is Knuth equivalent to $\frev(w')$.   Therefore the leftmost maximal decreasing subsequence of $\frev(w')$ is the flip-reverse of the rightmost maximal decreasing subsequence of $w'$, and hence the entries in the the $j^{th}$ column of the left key of $U$ are precisely the flip-reverses of the entries in the $j^{th}$ column of the right key of $T$.
\end{proof}

\begin{theorem}{\label{thm:leftkeygen}}
The Young Demazure atoms and Young key polynomials are generated by the left keys of semistandard Young tableaux as follows:

$$\ya_a= \sum_{\substack{T \in \SSYT_n( \lambda (a)) \\ K_-(T) = \keytab(a)}} x^{\wt(T)} \qquad \mbox{ and }\qquad \ykey_{a} = \sum_{\substack{T \in \SSYT_n( \lambda (a)) \\ K_-(T) \ge \keytab(a)}} x^{\wt(T)},$$ 
where $\ge$ means entrywise comparison and $n=\ell(a)$.
\end{theorem}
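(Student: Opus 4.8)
The plan is to transport the known right-key formulas for the reverse Demazure atom (equation~\eqref{atomrtkey}) and the reverse key polynomial (Theorem~\ref{thm:rightkey}) across the flip-and-reverse correspondence, using the relations $\ya_a(x_1,\ldots,x_n)=\atom_{\rev(a)}(x_n,\ldots,x_1)$ and $\ykey_a(x_1,\ldots,x_n)=\key_{\rev(a)}(x_n,\ldots,x_1)$ recorded in~\eqref{eqn:keyyoungkey}. The main vehicle is the map $\Phi$ on $\SSYT_n(\lambda)$ sending $T$ to the Schensted insertion tableau of $\frev(\col(T))$, where $\lambda=\sort(a)$.

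First I would record the basic properties of $\Phi$. Since the content of $\frev(\col(T))$ is $\rev(\wt(T))$ and insertion preserves content, $\wt(\Phi(T))=\rev(\wt(T))$; in particular $x^{\wt(\Phi(T))}$ is obtained from $x^{\wt(T)}$ by the variable reversal $x_i\leftrightarrow x_{n+1-i}$. Using that $\frev$ is an involution on words together with Lemma~\ref{lem:knuthfrev}, one checks $\Phi$ is a weight-reversing involution: if $U=\Phi(T)$ then $\col(U)\sim\frev(\col(T))$, so $\frev(\col(U))\sim\col(T)$ and hence inserts back to $T$. Lemma~\ref{lem:rightkeyleftkey} then supplies the crucial compatibility $K_-(\Phi(T))=\frev(K_+(T))$; since $K_+(T)$ is a key of shape $\lambda$, this simultaneously certifies that $\Phi(T)\in\SSYT_n(\lambda)$ (well-definedness of shape can alternatively be seen through the evacuation interpretation of $\frev$).

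Next I would establish two facts about $\frev$ on keys: that $\frev(\keytab(\rev(a)))=\keytab(a)$, and that $\frev$ reverses the entrywise order. The first holds because $\frev$ carries keys to keys with $\wt(\frev(K))=\rev(\wt(K))$, and a key is determined by its weight, so $\frev(\keytab(\rev(a)))$ is the key of weight $a$. For the second, flipping entries $i\mapsto n+1-i$ reverses the order in each cell, while the subsequent column reversal permutes cells identically within two keys of the same shape; hence $K\le K'$ if and only if $\frev(K)\ge\frev(K')$.

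Finally I would assemble the formulas. Applying the variable reversal to~\eqref{atomrtkey} for $\rev(a)$ and substituting $U=\Phi(T)$ turns the sum over $T$ with $K_+(T)=\keytab(\rev(a))$ into a sum over $U$ with $K_-(U)=\frev(\keytab(\rev(a)))=\keytab(a)$, which is exactly $\ya_a$. The key-polynomial formula runs identically, except the condition $K_+(T)=\keytab(\rev(a))$ in Theorem~\ref{thm:rightkey} is the inequality $K_+(T)\le\keytab(\rev(a))$, which under $\frev$ becomes $K_-(U)\ge\keytab(a)$. I expect the main obstacle to be the bookkeeping here: checking carefully that $\Phi$ is a genuine shape- and alphabet-preserving involution, and that it therefore restricts to honest bijections between the subsets of $\SSYT_n(\lambda)$ cut out by the right-key conditions on $T$ and the corresponding left-key conditions on $U$.
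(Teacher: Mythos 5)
Your proposal is correct and follows essentially the same route as the paper's own proof: both transport the right-key formulas (equation~\eqref{atomrtkey} and Theorem~\ref{thm:rightkey}) across the flip-and-reverse relation via the map $T \mapsto$ Schensted insertion of $\frev(\col(T))$, invoking Lemma~\ref{lem:rightkeyleftkey} to convert the right-key condition into a left-key condition. The only difference is that you spell out details the paper dismisses as ``clearly invertible'' or ``straightforward to check'' (the involutive property of $\Phi$, the identity $\frev(\keytab(\rev(a)))=\keytab(a)$, and the order-reversal of $\frev$ on keys), which is a welcome but not substantively different elaboration.
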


\begin{proof}
Consider the first expansion. Recall that $\ya_a(x_1,\ldots , x_n) = \atom_{\rev(a)}(x_n,\ldots , x_1)$ and that (by Equation~\ref{atomrtkey}) $\atom_{\rev(a)}$ is generated by the set of all semistandard Young tableaux whose right key equals $\keytab(\rev(a))$. It is therefore enough to exhibit a weight-reversing bijection between the set of all semistandard Young tableaux whose right key equals $\keytab(\rev(a))$ and the set of all semistandard Young tableaux whose left key is $\keytab(a)$.

We know from Lemma~\ref{lem:rightkeyleftkey} that if $T$ is a semistandard Young tableau such that $K_+(T)=\keytab(\rev(a))$, then the semistandard Young tableau $S$ obtained via Schensted insertion of $\frev(\col(T))$ has $K_-(S) = \frev(K_+(T)) = \frev(\keytab(\rev(a))) = \keytab(a)$. This process is clearly invertible, hence bijective, and the application of $\frev$ to $\col(T)$ ensures it is weight-reversing.

For the second expansion, we recall that $\ykey_a(x_1,\ldots , x_n) = \key_{\rev(a)}(x_n, \ldots , x_1)$ and that by Theorem~\ref{thm:rightkey} $\key_{\rev(a)}$ is generated by the set of all semistandard Young tableaux whose right key is less than or equal to $\keytab(\rev(a))$. It is straightforward to check that if $K_+(T)\le \keytab(\rev(a))$, then the semistandard Young tableau $S$ obtained via Schensted insertion of $\frev(\col(T))$ has $K_-(S) \ge \frev(K_+(T)) = \keytab(a)$. The second expansion then follows by applying the same argument used to prove the first expansion.
\end{proof}

\begin{ex}{\label{ex:frev}}
Let $T = \tableau{ 3 & 4 \\1 & 1 & 2 }$, which has right key $K_+(T) = \tableau{ 4 & 4 \\ 2 & 2 & 2 }$. We have $\col(T) =  3  1  4  1  2$. Schensted insertion of $\frev(\col(T))=3  4  1  4  2$ produces the semistandard Young tableau $\tableau{3 & 4 \\ 1 & 2 & 4}$ which indeed has left key $\tableau{3 & 3 \\ 1 & 1 & 3} = \frev(K_+(T))$.
\end{ex}

\subsection{Row-frank words}
Our next aim is to realize Young key polynomials as traces on modules. For this, we first adapt a formula of \cite{LasSch90} expressing key polynomials in terms of \emph{row-frank} words.  The first condition below is equivalent to the condition of being row-frank; see~\cite{RS95} for details.  The \emph{standardization} of a semistandard Young tableau $T$, denoted $\std(T)$, is the standard Young tableau obtained by replacing the $1$'s in $T$ from left to right by $1,2, \hdots , \gamma_1$, the $2$'s by $\gamma_1 +1 , \gamma_1 +2, \hdots , \gamma_1 + \gamma_2$, and so on, where $\gamma_i$ equals the number of times the entry $i$ appears in $T$. Given a word $u$ in positive integers, its \emph{row-word factorization} is $\cdots u^{(2)} u^{(1)}$, where each \emph{row-word} $u^{(i)}$ is a weakly increasing subsequence of maximal length. 

For a weak composition $a$, let $\w(a)$ be the set of all words $u = \cdots u^{(2)} u^{(1)}$ with each $u^{(i)}$ having $a_i$ letters, satisfying the following conditions.
\begin{enumerate}
\item The word $u$ maps to a pair $(P,\std(\keytab(a)))$ under the \emph{column insertion} described in \cite{RS95}.
\item No letter of $u^{(i)}$ exceeds $i$.
\end{enumerate}

\begin{theorem}{\label{thm:frankkey}}\cite{LasSch90}
The key polynomials are generated using words in $\w(a)$ as follows:
$$\key_a = \sum_{u \in \w(a)} x_u.$$ 
\end{theorem}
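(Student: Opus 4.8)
The plan is to deduce the identity from the right-key description of key polynomials (Theorem~\ref{thm:rightkey}) by exhibiting a weight-preserving bijection between $\w(a)$ and the set $\{T \in \SSYT_n(\sort(a)) : K_+(T) \le \keytab(a)\}$. The engine is the column-insertion form of the Robinson--Schensted correspondence, which sends a word $u$ to a pair $(P,Q)$ of tableaux of common shape with $P$ semistandard and $Q$ standard, and which preserves content, so that $x_u = x^{\wt(P)}$.

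First I would fix the recording tableau. By bijectivity of column insertion, for a fixed standard tableau $Q$ the assignment $u \mapsto P$ is a bijection from the words with recording tableau $Q$ onto the semistandard tableaux of the shape of $Q$. Taking $Q = \std(\keytab(a))$ (so the shape is $\sort(a)$), I would observe that the run structure of $u$, i.e.\ the decomposition into maximal weakly increasing subwords, is determined by the descent structure of $Q$; this shows that condition (1) already pins down the run lengths to be exactly $|u^{(i)}| = a_i$ (an empty run occurring wherever $a_i = 0$), so condition (1) is precisely the row-frank condition refined to the composition $a$. Thus, before imposing the flag condition, the words satisfying condition (1) biject weight-preservingly with semistandard tableaux of shape $\sort(a)$; this is the row-frank description of the corresponding Schur function.

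The crux is to show that the flag condition (2), that no letter of the $i$th run from the right exceeds $i$, corresponds under this bijection exactly to the bound $K_+(P) \le \keytab(a)$. Here I would use the frank-word characterization of the right key together with an analysis of how the flag constrains column insertion: bounding the largest letter permitted among the first $i$ runs translates, through the description of $\keytab(a)$ as the key whose $j$th column is $\{i : a_i \ge j\}$, into the entrywise inequality on the right key. Concretely, I expect to argue that the value bounding run $i$ controls the largest entry that can be bumped into the cells recorded at the corresponding steps of $Q$, hence bounds the matching entries of $K_+(P)$. Granting this equivalence, the bijection restricts to $\w(a) \leftrightarrow \{T : K_+(T) \le \keytab(a)\}$, and summing $x_u = x^{\wt(P)}$ over $\w(a)$ yields $\sum_{K_+(T) \le \keytab(a)} x^{\wt(T)} = \key_a$ by Theorem~\ref{thm:rightkey}.

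I expect this last step --- matching the flag condition to the right-key inequality --- to be the main obstacle, since it requires linking a row-based statistic on $u$ to the column-based computation of right keys via frank words. As an alternative route that sidesteps some of the RSK bookkeeping, one could instead build a direct weight-preserving bijection between $\w(a)$ and the pairs appearing in Theorem~\ref{thm:keycompatible}, since both descriptions are governed by the Knuth class of $\col(\keytab(a))$ and an analogous flag condition; the same difficulty then reappears as the need to match the two flag conditions, but the Knuth-equivalence bookkeeping is already available from Lemma~\ref{lem:knuthfrev} and Proposition~\ref{prop:Knutheq}.
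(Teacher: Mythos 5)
You should first be aware that the paper contains no proof of this statement: Theorem~\ref{thm:frankkey} is quoted from Lascoux--Sch\"utzenberger \cite{LasSch90} (in the column-insertion formulation of \cite{RS95}) and is then used as a black box, via the flip-and-reverse machinery, to derive its Young analogue, Theorem~\ref{thm:frankyoungkey}. So there is no in-paper argument to compare against, and your attempt must stand on its own as a proof of the cited result.

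Judged on those terms, it has a genuine gap --- one you have correctly located yourself. The RSK bookkeeping in your first two paragraphs is sound: fixing the recording tableau $Q=\std(\keytab(a))$ gives a content-preserving bijection between words column-inserting to $(\,\cdot\,,Q)$ and $\SSYT(\sort(a))$, and since the descents of $\std(\keytab(a))$ occur exactly at the partial sums of $a$, this forces the weakly increasing runs of $u$ to have lengths $a_i$, so condition (1) does subsume the run-length requirement. But the entire mathematical content of the theorem is the step you defer with ``granting this equivalence'': that under this bijection the flag condition (no letter of $u^{(i)}$ exceeds $i$) holds if and only if $K_+(P)\le\keytab(a)$. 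Your sketch of that step (``the value bounding run $i$ controls the largest entry that can be bumped into the cells recorded at the corresponding steps of $Q$'') is a hope, not an argument, and the difficulty is structural: the right key is computed from \emph{column}-frank words and column factorizations of the Knuth class, while the flag condition constrains the \emph{row} factorization of a single distinguished word in that class; bridging row-based and column-based data is precisely what the plactic/keys machinery of \cite{LasSch90} and \cite{RS95} is built to do. The same gap reappears unchanged in your proposed alternative route through Theorem~\ref{thm:keycompatible}, as you note. As written, the proposal reduces the theorem to its hardest ingredient and stops there; to finish you would need to prove the flag-condition/right-key equivalence (e.g.\ by analyzing how column insertion interacts with the key of the recording tableau, as in \cite{RS95}), or else simply cite \cite{LasSch90} as the paper does.
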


We now provide the analogue of this generating function for Young key polynomials. 
For a weak composition $a$, let $\yw(a)$ be the set of all words $u = \cdots u^{(2)} u^{(1)}$ with each $u^{(i)}$ having $a_i$ letters, satisfying the following conditions.
\begin{enumerate}
\item The word $\frev(u)$ maps to a pair $(P,\std(\keytab(\rev(a))))$ under column insertion.
\item For each letter $j$ of $u^{(i)}$, we have $i \le j \le \ell(a)$.
\end{enumerate}

\begin{ex}\label{ex:WandYW}
We have 
\[\w(032)=\{33| 222 |, 33| 122 |, 33| 112 |, 33| 111|, 23| 111|, 23| 112|, 23| 122|, 22|111|, 22|112| \}\]
and
\[\yw(230)=\{| 222| 11, | 223| 11, | 233| 11, | 333| 11, | 333| 12, | 233 | 12, | 223 | 12, |333|22, |233|22 \},\]
 where the vertical bars denote the row word factorization (including empty row-words).
\end{ex}

\begin{theorem}{\label{thm:frankyoungkey}}
The Young key polynomials are generated using the words in $\yw(a)$ as follows:
$$\ykey_a = \sum_{w \in \yw(a)} x_w.$$ 
\end{theorem}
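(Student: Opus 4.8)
The plan is to deduce this from the reverse analogue, Theorem~\ref{thm:frankkey}, by transporting everything through the flip-and-reverse involution, exactly as the defining identity~(\ref{eqn:keyyoungkey}) suggests. Writing $b=\rev(a)$ and $n=\ell(a)$, Theorem~\ref{thm:frankkey} gives $\key_b=\sum_{u\in\w(b)}x_u$. Reversing the variables $x_i\mapsto x_{n+1-i}$ replaces each letter $j$ of $u$ by $n+1-j$, so that $\key_b(x_n,\ldots,x_1)=\sum_{u\in\w(b)}x_{f(u)}$. Since a monomial $x_w$ depends only on the multiset of letters of $w$, we have $x_{f(u)}=x_{\frev(u)}$, and hence by~(\ref{eqn:keyyoungkey}), $\ykey_a=\sum_{u\in\w(b)}x_{\frev(u)}$. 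It therefore suffices to prove that $u\mapsto\frev(u)$ is a bijection from $\w(\rev(a))$ onto $\yw(a)$, after which reindexing by $w=\frev(u)$ yields the claimed formula.

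To see this, I would show directly that $u\in\w(\rev(a))$ if and only if $\frev(u)\in\yw(a)$; since $\frev$ is an involution on words, this gives the bijection. The insertion condition is essentially free: condition~(1) defining $\yw(a)$ requires $\frev(\frev(u))=u$ to column-insert to recording tableau $\std(\keytab(\rev(a)))$, which is precisely condition~(1) defining $\w(\rev(a))$. For the flag-type condition~(2), I would track how $\frev$ acts on row-word factorizations: if $u=\cdots u^{(2)}u^{(1)}$ with $|u^{(i)}|=(\rev(a))_i$, then $\frev(u)=\frev(u^{(1)})\frev(u^{(2)})\cdots$ read left to right, each $\frev(u^{(i)})$ weakly increasing, so the $i$th row-word of $\frev(u)$ is $\frev(u^{(n+1-i)})$, which has $a_i$ letters. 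The bound in~(2) for $\w$, namely that letters of $u^{(n+1-i)}$ lie in $[1,n+1-i]$, then flips to the requirement that letters of the $i$th row-word of $\frev(u)$ lie in $[i,n]=[i,\ell(a)]$, which is precisely condition~(2) defining $\yw(a)$.

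The main point requiring care is the claim that $\frev$ genuinely carries the row-word factorization of $u$ to that of $\frev(u)$, reversing the order of the row-words and flipping the letters within each. This rests on checking that a strict descent at the boundary between consecutive row-words of $u$ (the last letter of $u^{(i+1)}$ exceeding the first letter of $u^{(i)}$) corresponds, after flipping and reversing, to a strict descent at the matching boundary of $\frev(u)$, so that maximal weakly increasing runs are sent to maximal weakly increasing runs; I would also confirm that empty row-words (where $a_i=0$) are placed consistently on both sides, as illustrated by Example~\ref{ex:WandYW}. Granting this structural compatibility, the length bookkeeping $(\rev(a))_i=a_{n+1-i}$ and the flip of letter ranges above complete the verification that $\frev$ restricts to a bijection $\w(\rev(a))\to\yw(a)$, and the formula follows.
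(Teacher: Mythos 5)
Your proof is correct and takes essentially the same route as the paper's: both deduce the formula from Theorem~\ref{thm:frankkey} together with (\ref{eqn:keyyoungkey}) by showing that $\frev$ restricts to a bijection between $\w(\rev(a))$ and $\yw(a)$, with the column-insertion condition transported tautologically (since $\frev$ is an involution) and the flag condition flipped from the upper bound $b\le i$ to the lower bound $j\ge i$. Your added care about how $\frev$ interacts with the row-word factorization (block sizes, boundary descents, empty blocks) is a detail the paper's terser proof glosses over, but the argument is the same.
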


\begin{proof}
Consider a word $u$ in $\w(a)$ and let $w=\frev(u)$.  Then $w$ satisfies condition (1) for $\yw(a)$ by construction.  Consider a letter $b$ in $u^{(i)}$.  By definition, $b \le i$. The flip $n-b+1$ of $b$ appears in the $(n-i+1)^{th}$ row-word of $w$, and $b \le i$ implies $n-b+1 \ge n-i+1$.  So $w$ satisfies both the conditions to be in the set $\yw(a)$. Since flipping and reversing is an invertible process, we have that the words in $\yw(a)$ are exactly the flip-reverses of the words in $\w(\rev(a))$. Then since the monomials appearing in $\ykey_a(x_1,\ldots x_n)$ are the flips of those appearing in $\key_{\rev(a)}(x_n,\ldots, x_1)$, it follows from Theorem~\ref{thm:frankkey} that $\yw(a)$ generates $\ykey_a$.
\end{proof}


\subsection{Young key polynomials as traces on modules} 

In \cite{RS95}, \emph{generalized flagged Schur modules} and \emph{key modules} are defined. The key polynomials are realized as traces on key modules, which are a special case of generalized flagged Schur modules. In this section we modify the Reiner-Shimozono approach to construct modules so that the Young key polynomials are realised as traces on these modules. 

As in~\cite{RS95}, a \emph{diagram} $D$ is a finite subset of the Cartesian product $\mathbb{P} \times \mathbb{P}$ of the positive integers with itself, where every element of $\mathbb{P} \times \mathbb{P}$ in $D$ is thought of as being a box.  A \emph{filling of shape $D$} is a map $T: D \rightarrow \mathbb{P}$ assigning a positive integer to each box in $D$ (note this is called a \emph{tableau of shape $D$} in \cite{RS95}).  

Let $\mathbb{F}$ be a field of characteristic $0$, and let $\mathcal{T}^{n}_D$ be the vector space over $\mathbb{F}$ with basis the set of all fillings $T$ of shape $D$ whose largest entry does not exceed $n$. Fix an order $\mathfrak{b}_1, \mathfrak{b}_2, \ldots$ on the boxes of $D$, and identify the filling $T$ with the tensor product $\epsilon_{T(\mathfrak{b}_1)}\otimes \epsilon_{T(\mathfrak{b}_2)}\otimes \cdots$, where $\epsilon_i$ is the $i$th standard basis vector. Then an action of $GL_n(\mathbb{F})$ on  $\mathcal{T}^{n}_D$ is defined by letting $GL_n(\mathbb{F})$ act on each $\epsilon_i$ as usual and extending this action linearly.  

The \emph{row group} $R(D)$ (respectively \emph{column group} $C(D)$) is the set of all permutations of the boxes of $D$ which fixes the rows (resp. columns) in which the boxes appear.  These groups act on $\mathcal{T}^{n}_D$ by permuting the positions of the entries within a filling.  As in~\cite{RS95}, define 
\[e_T = \sum_{\alpha \in R(D), \,\, \beta \in C(D)} {\rm sgn}(\beta) T\alpha \beta,\] 
where $T\alpha \beta$ is the filling obtained by acting first by $\alpha$ and then by $\beta$.

Define the \emph{Young generalized flagged Schur module $\yflagschur^{n}_{D}$} for an arbitrary diagram $D$ (with $n$ at least the maximum row index of $D$) to be the subspace of $\mathcal{T}^{n}_D$ spanned by the set $\{ e_T \}$ as $T$ runs over all fillings of shape $D$ whose entries in row $i$ are not smaller than $i$. It is straightforward that $\yflagschur^{n}_{D}$ is a $B$-module, where $B$ is the Borel subgroup of $GL_n(\mathbb{F})$ consisting of lower-triangular matrices.

\begin{remark}\label{rmk:youngvsreverse}
The construction of the \emph{generalized flagged Schur module $\flagschur_{D}$} described in~\cite{RS95} is similar, but serves to illustrate an important difference in the behaviors of Young and reverse families of polynomials. In \cite{RS95} $\mathcal{T}_D$ is defined to be the vector space with basis consisting of all fillings of shape $D$, with no restriction on the size of the entries. In this way, $\mathcal{T}_D$ is a $GL_\infty(\mathbb{F})$-module. Then $\flagschur_{D}$ is spanned by the set $\{ e_T \}$ as $T$ runs over all fillings of shape $D$ whose entries in row $i$ are not \emph{larger} than $i$, which is finite even though $\mathcal{T}_D$ is infinite-dimensional. In this way, $\flagschur_{D}$ is a module for the opposite Borel subgroup $B_-$ consisting of upper-triangular elements of $GL_\infty(\mathbb{F})$. The dependence on $n$ in the Young case is reflected in the fact that appending zeros to a weak composition does not change the corresponding key polynomial, but does change the Young key polynomial.
\end{remark}

\begin{ex}
Let $a=032$. Then if $T \mbox{ $=$ } \vline \tableau{ 2 & 3 \\ 1 & 2 & 2 \\ \\ \hline}$, applying elements of the row group to $T$ yields the following:
\begin{displaymath}
 \begin{array}{c@{\hskip1.5\cellsize}c@{\hskip1\cellsize}c@{\hskip1.5\cellsize}c@{\hskip1.5\cellsize}c@{\hskip1.5\cellsize}c@{\hskip1.5\cellsize}c}
2 \,\,\vline \tableau{ 2 & 3 \\  1 & 2 & 2  \\ \\ \hline} &  2  \,\,\vline \tableau{  2 & 3 \\  2 & 1 & 2 \\ \\ \hline }  & 2  \,\,\vline \tableau{  2 & 3 \\ 2 & 2 & 1 \\ \\ \hline }  &   2  \,\,\vline \tableau{  3 & 2 \\ 1 & 2 & 2 \\ \\ \hline }  &    2 \,\, \vline \tableau{  3 & 2 \\  2 & 1 & 2 \\ \\ \hline }  & 2 \,\, \vline \tableau{  3 & 2 \\ 2 & 2 & 1 \\ \\ \hline }  
    \end{array}
\end{displaymath}
    where the coefficients are $2$ because there are two distinct permutations yielding each ordering of $1,2,2$. It is easy to see that for any filling $S$ with repeated entries in any column, we have $\sum_{ \beta \in C(D)} {\rm sgn}(\beta) S \beta = 0$, hence only the first and fifth fillings above contribute to $e_T$. Applying the column group to each of these and summing the resulting fillings yields
\[
e_T  =  2 \,\, \vline \tableau{ 2 & 3 \\  1 & 2 & 2  \\ \\ \hline}  - 2  \,\, \vline \tableau{  1 & 3 \\  2 & 2 & 2 \\ \\ \hline }   - 2  \,\, \vline \tableau{  2 & 2 \\ 1 & 3 & 2 \\ \\ \hline }   + 2  \,\, \vline \tableau{  1 & 2 \\ 2 & 3 & 2 \\ \\ \hline }  + 2  \,\, \vline \tableau{  3 & 2 \\  2 & 1 & 2 \\ \\ \hline }   - 2  \,\,\vline \tableau{  2 & 2 \\ 3 & 1 & 2 \\ \\ \hline }   - 2 \,\, \vline \tableau{  3 & 1 \\ 2 & 2 & 2 \\ \\ \hline }   + 2  \,\, \vline \tableau{  2 & 1 \\ 3 & 2 & 2 \\ \\ \hline }
\]
\end{ex}

Define the \emph{key module} $\keymod_a$ for the weak composition $a$ to be the $B_-$-module $\flagschur_{D(a)}$.

\begin{theorem}{\cite{RS95}}{\label{thm:keymod}}
For $u = \cdots u^{(2)} u^{(1)}$ in $\w(a),$ let $T(u)$ be the filling of shape $D(a)$ obtained by placing $u^{(j)}$ in row $j$.  Then $\{ e_{T(u)} : u \in \w(a)\}$ is a basis for the key module $\keymod_a$.
\end{theorem}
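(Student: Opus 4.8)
The plan is to verify the two defining properties of a basis separately---that $\{e_{T(u)} : u \in \w(a)\}$ spans $\keymod_a = \flagschur_{D(a)}$, and that it is linearly independent---organizing both arguments around the polytabloid structure of $e_T$ and the behavior of the column-group antisymmetrization.

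First I would record the standard reductions forced by the column group $C(D(a))$. Since $e_T = \sum_{\alpha \in R(D(a)),\, \beta \in C(D(a))} \mathrm{sgn}(\beta)\, T\alpha\beta$ is antisymmetrized over $C(D(a))$, any filling with two equal entries in some column satisfies $e_T = 0$ (as already observed in the example preceding the theorem), and sorting the entries strictly within each column changes $e_T$ only by a sign. Hence the span of $\{e_T : T \text{ flagged}\}$, which is $\keymod_a$ by definition, already equals the span of the $e_T$ as $T$ ranges over \emph{column-strict} flagged fillings, restricting attention to a finite, combinatorially tractable generating set.

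Second, for spanning, I would show that every column-strict flagged $e_T$ lies in the span of $\{e_{T(u)} : u \in \w(a)\}$ by a straightening argument. The relevant relations are the Garnir-type relations coming from the $\mathbb{F}[R(D(a))]\otimes \mathbb{F}[C(D(a))]$-module structure: antisymmetrizing over a set of boxes drawn from two adjacent columns forces a linear dependence expressing $e_T$ as a signed sum of $e_{T'}$ whose columns are closer to standard. Iterating terminates in a canonical set of normal forms. The crux of the theorem is that these normal forms are exactly the $T(u)$ with $u \in \w(a)$: the straightening process is compatible with the column insertion of \cite{RS95}, so it preserves the insertion data of the column-reading word, while the flag condition (entries in row $i$ at most $i$) singles out precisely those surviving words whose recording tableau is $\std(\keytab(a))$. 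Here I would lean directly on the column-insertion normal-form results of \cite{RS95}.

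Third, for linear independence, I would run a leading-term argument. Fixing a total order on fillings (for instance reverse-lexicographic on the column-reading word), the expansion of each $e_{T(u)}$ into the filling basis of $\mathcal{T}^n_{D(a)}$ has a unique maximal term---the column-sorted form of $T(u)$---with coefficient $\pm 1$. Because column insertion is injective on $\w(a)$ once the recording tableau is pinned to $\std(\keytab(a))$, these maximal fillings are pairwise distinct and no one of them occurs as the maximal term of a different $e_{T(u')}$, so the resulting unitriangular transition matrix gives independence. The main obstacle is the spanning/straightening step: one must check that the Garnir relations interlock with column insertion so that the normal forms are indexed exactly by $\w(a)$---equivalently, that the flag condition matches the surviving standard words and forces their recording tableau to be $\std(\keytab(a))$. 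This is where the combinatorics of column insertion does the real work, whereas the vanishing/sorting reductions and the triangularity argument for independence are comparatively routine.
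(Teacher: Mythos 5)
First, a point of orientation: the paper does not prove this statement at all --- it is imported verbatim from \cite{RS95} (it is their basis theorem for key modules), and the paper's own contribution is the subsequent Corollary, which transfers this basis to the Young key module $\ykeymod_a$ via the flip-and-reverse isomorphism $\psi$. So there is no in-paper proof to compare against; your proposal must stand on its own as a proof of the Reiner--Shimozono theorem. As such, it has a genuine gap at its core. You correctly identify the crux --- that the normal forms of a straightening process on $\flagschur_{D(a)}$ are exactly the fillings $T(u)$ with $u \in \w(a)$ --- but you then ``lean directly on the column-insertion normal-form results of \cite{RS95}'' for precisely this step. The purely combinatorial facts about column insertion (uniqueness of a word with prescribed insertion and recording data) are fair to cite, but the statement you need is module-theoretic: that the Garnir-type relations cut the span of column-strict flagged $e_T$'s down to exactly the classes with recording tableau $\std(\keytab(a))$, with the flag condition accounting for the restriction on recording tableaux. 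That is the content of the theorem itself, so as written the argument is circular; no mechanism is offered for \emph{why} the surviving words are forced to have recording tableau $\std(\keytab(a))$ rather than some larger or different set.

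There are also two technical points that cannot be waved through. (i) Garnir/straightening theory is standard for partition (and skew) shapes, but $D(a)$ is a general composition diagram, and moreover $\keymod_a$ is a flagged span: a Garnir relation applied to a flagged filling typically produces fillings that violate the flag condition, so one must argue separately that the reduction can be carried out inside (or projected back into) $\flagschur_{D(a)}$ and that it terminates --- your proposal assumes both. (ii) In the independence step, recall that $e_{T\alpha} = e_T$ for $\alpha \in R(D)$ and $e_{T\beta} = \mathrm{sgn}(\beta) e_T$ for $\beta \in C(D)$, so $e_{T(u)}$ depends on $T(u)$ only through row contents up to signed column rearrangement; distinct words $u \neq u'$ in $\w(a)$ could a priori yield $e_{T(u)} = \pm e_{T(u')}$, or distinct $e_{T(u)}$'s sharing the same column-sorted maximal term. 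Injectivity of column insertion on words does not by itself exclude this, since two different fillings can have identical column multisets; you need an argument at the level of fillings modulo the row and column groups. Both issues are resolved in \cite{RS95}, but resolving them is exactly where the work lies, and the proposal as it stands defers that work back to the source being proved.
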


We now describe the variation on the Reiner-Shimozono construction that needed to describe the Young key polynomials as characters. 
Let $a$ be a weak composition of length $n$, and define the \emph{Young key module} $\ykeymod_a$ for the weak composition $a$ to be the $B$-module $\yflagschur_{D(a)}$. Here we may drop $n$ from the notation, since $n$ is determined by the weak composition $a$.

\begin{cor}
For $u = u^{(1)} u^{(2)} \cdots$ in $\yw(a)$, let $T(u)$ be the filling of shape $D(a)$ obtained by placing $u^{(j)}$ in row $j$.  Then $\{ e_{T(u)} : u \in \yw(a)\}$ is a basis for the Young key module $\ykeymod_a$.
\end{cor}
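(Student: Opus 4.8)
The plan is to transport the Reiner--Shimozono basis theorem (Theorem~\ref{thm:keymod}) from the reverse setting to the Young setting by realizing the flip-and-reverse map at the level of the ambient tensor spaces, and then to invoke the fact, already extracted in the proof of Theorem~\ref{thm:frankyoungkey}, that the words of $\yw(a)$ are precisely the flip-reverses of the words of $\w(\rev(a))$.

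First I would define a linear map $\Phi \colon \mathcal{T}^n_{D(a)} \to \mathcal{T}^n_{D(\rev(a))}$ on the basis of fillings by reversing the order of the rows and replacing each entry $e$ by $n+1-e$, then extending linearly; since flip-and-reverse is invertible, $\Phi$ is a linear isomorphism. Reversing the row order is the fixed box-bijection $\rho\colon (i,j)\mapsto(n+1-i,j)$, which carries the boxes of $D(a)$ onto those of $D(\rev(a))$ and, restricting within each column, identifies $C(D(a))$ with $C(D(\rev(a)))$ and (sending rows to rows) $R(D(a))$ with $R(D(\rev(a)))$; because conjugation by $\rho$ preserves cycle type it preserves ${\rm sgn}$. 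The entry-flip commutes with all box-permutations, so a reindexing of the defining double sum for $e_T$ gives $\Phi(e_T)=e_{\Phi(T)}$ for every filling $T$.

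Next I would match the two families of generators. The Young flag condition defining $\yflagschur_{D(a)} = \ykeymod_a$ requires every entry in row $i$ to be at least $i$; applying $\Phi$ turns such an entry, now sitting in row $n+1-i$ of $D(\rev(a))$, into one at most $n+1-i$, which is exactly the reverse flag condition defining $\flagschur_{D(\rev(a))} = \keymod_{\rev(a)}$. Moreover, since $\rev(a)$ has length $n$, this flag condition forces all entries of the generators of $\keymod_{\rev(a)}$ to be at most $n$, so $\keymod_{\rev(a)}$ already lies inside $\mathcal{T}^n_{D(\rev(a))}$ and no discrepancy arises from the $GL_n$ versus $GL_\infty$ distinction of Remark~\ref{rmk:youngvsreverse}. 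Hence $\Phi$ restricts to a linear isomorphism $\ykeymod_a \xrightarrow{\sim} \keymod_{\rev(a)}$.

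Finally, for $u\in\yw(a)$ the filling $\Phi(T(u))$ agrees with $T(\frev(u))$ up to rearranging entries within rows; since $e_T$ depends on $T$ only through its rows (the sum over $R(D)$ absorbs any within-row permutation, so $e_{T\alpha}=e_T$), we obtain $\Phi(e_{T(u)})=e_{T(\frev(u))}$. As $u$ ranges over $\yw(a)$, its flip-reverse $\frev(u)$ ranges over $\w(\rev(a))$, so $\{e_{T(\frev(u))} : u\in\yw(a)\}$ is exactly the Reiner--Shimozono basis of $\keymod_{\rev(a)}$ provided by Theorem~\ref{thm:keymod}. Applying $\Phi^{-1}$ then shows $\{e_{T(u)} : u\in\yw(a)\}$ is a basis of $\ykeymod_a$. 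The main obstacle is the bookkeeping in the middle two steps: verifying precisely that $\Phi(e_T)=e_{\Phi(T)}$ (in particular that the column antisymmetrizer's signs are preserved under $\rho$) and that the two flag conditions correspond exactly, so that $\Phi$ carries $\ykeymod_a$ onto $\keymod_{\rev(a)}$ itself rather than onto some neighboring submodule.
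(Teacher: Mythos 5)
Your proof is correct and takes essentially the same approach as the paper: both transport the Reiner--Shimozono basis of $\keymod_{\rev(a)}$ (Theorem~\ref{thm:keymod}) through the linear isomorphism induced by the flip-and-reverse map on fillings, using the fact established in the proof of Theorem~\ref{thm:frankyoungkey} that the words of $\yw(a)$ are exactly the flip-reverses of the words of $\w(\rev(a))$. You additionally verify details the paper leaves implicit---the intertwining $\Phi(e_T)=e_{\Phi(T)}$ with its sign bookkeeping, the $GL_n$ versus $GL_\infty$ point from Remark~\ref{rmk:youngvsreverse}, and the invariance of $e_T$ under within-row rearrangement---which only strengthens the argument.
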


\begin{proof}
The flip-and-reverse map on fillings extends linearly to an involution $\psi$, hence an isomorphism, on  $\mathcal{T}^{n}_{D(a)}$. Moreover, $\psi$ sends a filling whose entries are at least their row index to a filling whose entries are at most their row index, and vice versa. In particular, by the proof of Theorem~\ref{thm:frankyoungkey}, $\psi$ carries $\{ e_{T(u)} : u \in \yw(a)\}$ to the basis $\{ e_{T(\frev(u))} : \frev(u) \in \w(\rev(a))\}$ of $\keymod_{\rev(a)}$ given by Theorem~\ref{thm:keymod}. 

Therefore, $\{ e_{T(u)} : u \in \yw(a)\}$ is a linearly independent set, since any linear dependence in this set would imply, via $\psi$, a linear dependence in the linearly independent set $\{ e_{T(\frev(u))} : \frev(u) \in \w(\rev(a))\}$. Similarly $\{ e_{T(u)} : u \in \yw(a)\}$ is spanning: suppose $e_T\in \ykeymod_a$. Then $\psi(e_T)\in \keymod_{\rev(a)}$, hence is in the span of the spanning set $\{ e_{T(\frev(u))} : \frev(u) \in \w(\rev(a))\}$ of $\keymod_{\rev(a)}$, and applying $\psi$ again yields $e_T$ as a linear combination of $\{ e_{T(u)} : u \in \yw(a)\}$.
\end{proof}

\begin{remark}
The order in which entries of $u^{(j)}$ are placed in row $j$ does not matter, since fillings with any given ordering of $u^{(j)}$ in each row $j$ appear in $e_T$ due to the action of the row group. In Example~\ref{ex:ykeymod}, we represent $e_T$ by the filling $T$ with entries increasing from left to right in each row, which agrees with the choices of representatives for key modules in \cite{RS95}.
\end{remark}

Let $x$ be the diagonal matrix whose diagonal entries are $x_1, x_2, \ldots , x_n$. We immediately obtain the following (compare to Corollary 14 in~\cite{RS95}).

\begin{cor}
The Young key polynomial $\ykey_a$ is the trace of $x$ acting on the Young key module $\ykeymod_a$.
\end{cor}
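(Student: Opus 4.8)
The plan is to compute the trace directly in the basis $\{e_{T(u)} : u \in \yw(a)\}$ for $\ykeymod_a$ supplied by the preceding corollary, by showing that each basis vector is an eigenvector for the action of the diagonal matrix $x$. First I would record how $x$ acts on an arbitrary filling. Since $x$ acts on each standard basis vector $\epsilon_i$ by the scalar $x_i$, and a filling $T$ is identified with the tensor $\epsilon_{T(\mathfrak{b}_1)} \otimes \epsilon_{T(\mathfrak{b}_2)} \otimes \cdots$, the matrix $x$ sends $T$ to $x^{\wt(T)}\,T$, where $\wt(T)$ is the content of $T$.

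The key step is then to observe that every filling appearing in $e_T$ has the same weight as $T$. Indeed, $e_T = \sum_{\alpha \in R(D),\, \beta \in C(D)} \mathrm{sgn}(\beta)\, T\alpha\beta$, and the row and column groups $R(D)$ and $C(D)$ only permute the positions of the entries of $T$, leaving the multiset of entries, and hence the weight, unchanged. Consequently $x \cdot e_T = x^{\wt(T)}\, e_T$, so each $e_T$ is an eigenvector of $x$ with eigenvalue $x^{\wt(T)}$. Applying this to the basis elements $e_{T(u)}$, the matrix of $x$ in the basis $\{e_{T(u)} : u \in \yw(a)\}$ is diagonal with entries $x^{\wt(T(u))} = x_u$. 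Therefore the trace of $x$ equals $\sum_{u \in \yw(a)} x_u$, which is exactly $\ykey_a$ by Theorem~\ref{thm:frankyoungkey}.

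There is no serious obstacle here; the only point requiring care is the verification that the row- and column-group actions preserve weight, which is precisely what makes this basis diagonalize $x$ and reduces the trace to the monomial sum of Theorem~\ref{thm:frankyoungkey}. Since the trace is independent of the choice of basis, computing it in this eigenbasis immediately yields the claimed identity, exactly as in the reverse case (Corollary~14 of~\cite{RS95}).
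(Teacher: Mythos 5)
Your proof is correct and is precisely the argument the paper leaves implicit: the paper states this corollary as an immediate consequence of the basis $\{e_{T(u)} : u \in \yw(a)\}$ together with Theorem~\ref{thm:frankyoungkey}, the point being exactly that each $e_{T(u)}$ is a weight vector for the diagonal matrix $x$ with eigenvalue $x_u$, so the trace is $\sum_{u \in \yw(a)} x_u = \ykey_a$. Your write-up simply makes explicit the verification (row and column group actions preserve weight) that justifies the word ``immediately,'' matching the approach of Corollary 14 in \cite{RS95}.
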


\begin{ex}\label{ex:ykeymod}
The Young key module $\ykeymod_{230}$ has basis $\{ e_T \}$ for the following fillings $T$. 

\begin{displaymath}
 \begin{array}{c@{\hskip2\cellsize}c@{\hskip2\cellsize}c@{\hskip2\cellsize}c@{\hskip2\cellsize}c@{\hskip2\cellsize}c@{\hskip2\cellsize}c@{\hskip2\cellsize}c@{\hskip2\cellsize}c@{\hskip2\cellsize}c@{\hskip2\cellsize}c}
\vline \tableau{ \\ 2 & 2 & 2 \\  1 & 1  \\ \hline} &   \vline \tableau{ \\ 2 & 2 & 3 \\  1 & 1  \\ \hline} &\vline \tableau{ \\ 2 & 3 & 3 \\  1 & 1  \\ \hline} &\vline \tableau{ \\ 3 & 3 & 3 \\  1 & 1  \\ \hline} &\vline \tableau{ \\ 2 & 3 & 3 \\  1 & 2  \\ \hline}  & 
\vline \tableau{ \\ 2 & 2 & 3 \\  1 & 2  \\ \hline} &\vline \tableau{ \\ 3 & 3 & 3 \\  2 & 2  \\ \hline} &\vline \tableau{ \\ 2 & 3 & 3 \\  2 & 2  \\ \hline}.
    \end{array}
\end{displaymath}

\end{ex}

\section{Other polynomial families and intersections}{\label{Sec:others}}
In this section, we provide a new formula in terms of Knuth equivalence for the \emph{fundamental slide} expansion of a key polynomial, and interpret compatible sequences in terms of the \emph{fundamental particle} basis, introduced in \cite{Sea20} as a common refinement of the fundamental slide and Demazure atom bases.  As we did for Young key polynomials and Young atoms, we also determine the intersections of further reverse bases and their Young analogues. 

\subsection{The fundamental and monomial slide bases}

For a weak composition $a$, define the \emph{fundamental fillings} $\FF(a)$ for $a$ \cite{Sea20} to be the (reverse) fillings of $D(a)$ satisfying the following conditions.
\begin{enumerate}
\item Entries weakly decrease from left to right in each row.
\item No entry in row $i$ is greater than $i$.
\item If a box with label $b$ is in a lower row than a box with label $c$, then $b<c$.
\end{enumerate}
The \emph{fundamental slide polynomial} $\fs_a$ \cite{Assaf.Searles} is the generating function of $\FF(a)$:
\[\fs_a = \sum_{T\in \FF(a)}x^{\wt(T)}.\]
For example, $\fs_{103} = x^{103}+x^{112}+x^{121}+x^{130}$, computed by $\FF(103)$ below.
\begin{figure}[ht]
\begin{displaymath}
 \begin{array}{c@{\hskip3\cellsize}c@{\hskip3\cellsize}c@{\hskip3\cellsize}c@{\hskip3\cellsize}c@{\hskip3\cellsize}c@{\hskip3\cellsize}c}
 \vline \tableau{  3 & 3 & 3 \\ \\  1 \\ \hline } &   \vline \tableau{  3 & 3 & 2 \\ \\  1 \\ \hline } &   \vline \tableau{  3 & 2 & 2 \\ \\  1 \\ \hline } &   \vline \tableau{  2 & 2 & 2 \\ \\  1 \\ \hline }     \end{array}
\end{displaymath}
\end{figure}

The \emph{monomial slide basis} can also be described using reverse fillings. Given a weak composition $a$, the \emph{monomial fillings} $\MF(a)$ \cite{Sea20} are the subset of $\FF(a)$ for which all entries in the same row are equal. The \emph{monomial slide polynomial} $\msp_a$ \cite{Assaf.Searles} is $$\msp_a = \sum_{T\in \MF(a)} x^{\wt(T)}.$$

For example, $\msp_{103} = x^{103}+x^{130}$.

Various formulas have been given \cite{Assaf.Searles:2}, \cite{Assaf:comb}, \cite{MPS18} for the fundamental slide expansion of a key polynomial. Here we provide another, more in keeping with the theme of the previous section. 

\begin{proposition}\label{prop:keytoslidecompatible}
$$\key_a = \sum_{\rev(b) \sim \col(\keytab(a))} \fs_{\maxcomp(b)}.$$
where $\maxcomp(b)$ is the weak composition associated to the compatible sequence for $b$ whose entries are maximum possible. (If $b$ has no compatible sequences, we declare $\fs_{\maxcomp(b)}=0$.)
\end{proposition}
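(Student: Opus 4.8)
The plan is to connect the fundamental slide expansion of $\key_a$ to the compatible-sequence description of key polynomials already established in Theorem~\ref{thm:keycompatible}, and then reorganize the compatible sequences by their underlying words $b$ in the Knuth class of $\col(\keytab(a))$. The crucial observation is that Theorem~\ref{thm:keycompatible} writes $\key_a$ as a sum of monomials $x^{\comp(w)}$ over all $b$-compatible sequences $w$, where $b$ ranges over words with $\rev(b)\sim \col(\keytab(a))$. So I would group the monomials in this expansion by which word $b$ produces them, reducing the problem to showing that for a \emph{fixed} word $b$, the monomials $\sum_w x^{\comp(w)}$ coming from all $b$-compatible sequences $w$ sum to exactly one fundamental slide polynomial, namely $\fs_{\maxcomp(b)}$.

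First I would recall the defining conditions (1)--(3) for a $b$-compatible sequence $w$ from Section~\ref{sec:CompatibleSequences}: $w$ is weakly increasing, strictly increases at each descent-position of $b$ (i.e.\ where $b_k<b_{k+1}$), and is bounded by $b$ entrywise. The key structural claim is that the set of $b$-compatible sequences is in weight-preserving bijection with the fundamental fillings $\FF(\maxcomp(b))$ defining $\fs_{\maxcomp(b)}$. The definition of $\maxcomp(b)$ as the weak composition of the \emph{maximal} $b$-compatible sequence is engineered precisely so that the ``shape'' it records captures the descent structure and flag conditions imposed on $w$. Concretely, I would verify that conditions (1)--(3) on compatible sequences translate exactly into the three conditions defining fundamental fillings: the weakly-increasing and strict-increase-at-descents conditions on $w$ correspond to the row-decrease and strict-separation-by-rows conditions for $\FF$, and the flag condition $w_k\le b_k$ corresponds to the flag condition that no entry in row $i$ of a fundamental filling exceeds $i$.

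The cleanest route is to invoke the known fact (from \cite{Sea20}, or directly from the definition of $\FF$) that $\fs_c = \sum_{\gamma} x^\gamma$ summed over the weak compositions $\gamma$ obtainable from $c$ by a ``slide'' move — equivalently, $\fs_c$ is the sum over weakly-increasing flag-compatible sequences associated to a fixed descent pattern. Since $\maxcomp(b)$ records exactly the descent composition and flag data of $b$, the set of $b$-compatible sequences and the set of monomials in $\fs_{\maxcomp(b)}$ coincide as multisets of weights. I would establish this by a direct bijection sending a $b$-compatible sequence $w$ to its weight $\comp(w)$, checking that the image is precisely $\{\wt(T): T\in\FF(\maxcomp(b))\}$, and that distinct Knuth-equivalent words $b$ (giving distinct $\maxcomp(b)$, hence distinct fundamental slide polynomials) contribute disjointly. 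Summing over all $b$ with $\rev(b)\sim\col(\keytab(a))$ then yields the claimed identity; the edge case where $b$ admits no compatible sequence contributes nothing, consistent with the convention $\fs_{\maxcomp(b)}=0$.

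The main obstacle I anticipate is making the passage from a single word $b$ to a single fundamental slide polynomial fully rigorous — specifically, verifying that the weight multiset of $b$-compatible sequences equals the weight multiset of $\FF(\maxcomp(b))$ rather than merely the monomial support, and confirming that different words in the Knuth class genuinely yield different (and non-overlapping) fundamental slide polynomials so that no cancellation or double-counting occurs. The definition of $\maxcomp(b)$ via the maximal compatible sequence is the linchpin here: I would need to check carefully that it is well-defined (the maximal $b$-compatible sequence is unique when one exists, obtained greedily by taking each $w_k$ as large as the flag and monotonicity constraints allow) and that it correctly encodes the descent set of $b$, since it is this descent data that the fundamental slide polynomial sums over.
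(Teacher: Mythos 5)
Your proposal takes essentially the same route as the paper's proof: reduce via Theorem~\ref{thm:keycompatible} to showing that, for each fixed word $b$, the $b$-compatible sequences are in weight-preserving bijection with the fundamental fillings $\FF(\maxcomp(b))$, with $\maxcomp(b)$ obtained by the greedy/maximal construction (which the paper carries out run-by-run, right to left, setting $c_i=\min\{b^{(i)},c_{i+1}-1\}$). One small remark: your worry about distinct words $b$ yielding distinct, non-overlapping slide polynomials is unnecessary, since the proposition is just a regrouping of the positive sum over pairs $(b,w)$ in Theorem~\ref{thm:keycompatible}, so no disjointness or cancellation issue can arise.
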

\begin{proof}
We need to establish $\displaystyle{\fs_{\maxcomp(b)} = \sum_{\textrm{$w$ is $b$-compatible}} x^w}$; the statement then follows from Theorem~\ref{thm:keycompatible}. The compatible sequence for a word $b$ whose entries are maximum possible is found as follows. First, partition $b$ into (weakly) decreasing runs $b=(r_1 | r_2 | \ldots | r_k)$. Let $b^{(i)}$ denote the rightmost (i.e. smallest) entry of $b$ in the $i^{th}$ run $r_i$. We proceed right-to-left, at each step replacing every entry in a run $r_i$ with a certain number $c_i$. To begin, replace every element in $r_k$ with $b^{(k)}$, i.e., we set $c_k=b^{(k)}$. Proceeding leftwards, replace every entry in $r_i$ with $c_i\coloneqq \min\{b^{(i)}, c_{i+1}-1\}$. This process is a variant of the construction of the \emph{weak descent composition} of a word in \cite{Assaf:comb}, \cite{MasSea20}.

Every compatible sequence $w$ for $b$ can be obtained from the maximal one by decrementing parts as long as we still have $w_i<w_{i+1}$ whenever $b_i<b_{i+1}$. In exactly the same way, every fundamental filling for $\maxcomp(b)$ can be obtained from the filling that has every entry equal to its row index by decrementing entries as long as entries in a given row remain strictly larger than entries in any lower row. This gives a weight-preserving bijection between the compatible sequences for $b$ and the fundamental fillings for $\maxcomp(b)$.
\end{proof}

For example, suppose $b=435254$. Then the partition into weakly decreasing runs gives $43|52|54$. We replace each entry in the last run with $4$, obtaining $43|52|{\bf 44}$. Next, we replace each entry in the next run with $\min\{2,4-1\} = 2$, obtaining $43|{\bf 22}|{\bf 44}$. Finally, we replace each entry in the first run with $\min\{3, 2-1\} = 1$, obtaining ${\bf 11}|{\bf 22}|{\bf 44}$. The largest compatible sequence for $b$ is thus $112244$.

\begin{ex}
From the table in Figure~\ref{fig:compseq}, we compute $\key_{032} = \fs_{221} + \fs_{032} + \fs_{131} + \, 0 + \fs_{230}$.
The only compatible sequence for $b=23223$ is $12223$, so $\maxcomp(23223)=131$.
\end{ex}

This yields a formula for the Young fundamental slide expansion of Young key polynomials, proved similarly to Theorem~\ref{thm:ykeycompatible}.

\begin{proposition}\label{prop:ykeytoyslidecompatible}
$$\ykey_a = \sum_{f(b) \sim \col(\keytab(a))} \yfs_{\rev(\maxcomp(b))}.$$
\end{proposition}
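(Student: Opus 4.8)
The plan is to mirror the proof of Theorem~\ref{thm:ykeycompatible}, transferring the reverse-side identity of Proposition~\ref{prop:keytoslidecompatible} through the flip-and-reverse correspondence. First I would record the two facts that drive the argument: the identity $\ykey_a(x_1,\ldots,x_n) = \key_{\rev(a)}(x_n,\ldots,x_1)$ from (\ref{eqn:keyyoungkey}), and the analogous relationship between the Young fundamental slide polynomials and the fundamental slide polynomials, namely $\yfs_c(x_1,\ldots,x_n) = \fs_{\rev(c)}(x_n,\ldots,x_1)$, which follows because $\YFF(c)$ and $\FF(\rev(c))$ are exchanged by the flip-and-reverse map on fillings (reversing rows sends the increasing-row, higher-row-larger-entries condition to the decreasing-row, lower-row-smaller-entries condition). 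With these in hand, the strategy is to apply Proposition~\ref{prop:keytoslidecompatible} to $\key_{\rev(a)}$, then push the resulting identity through the variable reversal $x_i \leftrightarrow x_{n+1-i}$.

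The key computational step is to translate the indexing. Applying Proposition~\ref{prop:keytoslidecompatible} to $\rev(a)$ gives
\[\key_{\rev(a)} = \sum_{\rev(b') \sim \col(\keytab(\rev(a)))} \fs_{\maxcomp(b')}.\]
I would then reparametrize the summation set. As in the proof of Theorem~\ref{thm:ykeycompatible}, the combination of Proposition~\ref{prop:Knutheq}, Lemma~\ref{lem:frevrighttoleft}, and Lemma~\ref{lem:knuthfrev} shows that the words $b'$ with $\rev(b') \sim \col(\keytab(\rev(a)))$ are exactly the flip-reverses of the words $b$ with $f(b) \sim \col(\keytab(a))$; more precisely, I would verify that $\rev(b') \sim \col(\keytab(\rev(a)))$ holds if and only if, writing $b' = f(b)$ so that $\rev(b') = \rev(f(b)) = \frev(b)$, we have $\frev(b) \sim \frev(\col(\keytab(a)))$ via Lemma~\ref{lem:frevrighttoleft}, which by Lemma~\ref{lem:knuthfrev} is equivalent to $f(b) \sim \col(\keytab(a))$ after accounting for the $\rev$ already built into the reverse-side statement. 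The bookkeeping here is the main obstacle: one must be careful to track exactly which of $f$, $\rev$, and $\frev$ appears at each stage, since the reverse-side Proposition already contains a $\rev$ in its summation condition and the maps do not commute freely.

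Finally I would match the summands. Under the variable reversal $I$ sending $x_i \mapsto x_{n+1-i}$, we have $I(\key_{\rev(a)}(x_1,\ldots,x_n)) = \ykey_a(x_1,\ldots,x_n)$ by (\ref{eqn:keyyoungkey}), and $I(\fs_c) = \yfs_{\rev(c)}$ by the slide-polynomial relation above. It remains to check that, under the reparametrization $b' \mapsto b$, the index $\maxcomp(b')$ transforms to $\rev(\maxcomp(b))$. This is the crux of the index-matching: since $b' = f(b)$ (up to the reversal already absorbed above), the maximal compatible sequence for $f(b)$ is the flip of the maximal compatible sequence for $b$, and flipping a compatible sequence reverses its weak composition of content, so $\maxcomp(f(b))$ has content $\rev(\maxcomp(b))$ — which is precisely the index appearing after applying $I$ and the $\rev$ from $I(\fs_c)=\yfs_{\rev(c)}$. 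Assembling these, applying $I$ to the reverse-side formula yields
\[\ykey_a = \sum_{f(b) \sim \col(\keytab(a))} \yfs_{\rev(\maxcomp(b))},\]
as claimed. I expect the proof to be short once the three composition-of-maps lemmas are invoked in the right order; the only real care needed is the consistent tracking of $f$ versus $\frev$ in the summation condition and the verification that $\maxcomp$ intertwines with flipping to produce the reversal in the index.
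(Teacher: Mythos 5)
Your overall strategy---applying Proposition~\ref{prop:keytoslidecompatible} to $\rev(a)$ and pushing the result through the variable reversal $I$---is exactly the route the paper intends (it states the result is proved similarly to Theorem~\ref{thm:ykeycompatible}), and your first paragraph is correct. However, the index bookkeeping in your second and third paragraphs contains two genuine errors. First, under your substitution $b'=f(b)$, the condition $\rev(b')\sim\col(\keytab(\rev(a)))$ becomes $\frev(b)\sim\col(\keytab(\rev(a)))$; since Knuth equivalence is preserved by $\frev$ (Lemma~\ref{lem:knuthfrev}) but \emph{not} by $f$ or $\rev$ separately, applying $\frev$ to both sides and using Lemma~\ref{lem:frevrighttoleft} together with Proposition~\ref{prop:Knutheq} yields $b\sim\col(\keytab(a))$, not $f(b)\sim\col(\keytab(a))$ as you assert; these are different sets of words, and no ``accounting for the $\rev$'' bridges them. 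Second, your crux claim---that the maximal compatible sequence for $f(b)$ is the flip of the maximal compatible sequence for $b$, so that $\maxcomp(f(b))=\rev(\maxcomp(b))$---is false: compatible sequences are weakly increasing and satisfy the flag condition $w_k\le b_k$, and neither property survives flipping. A counterexample occurs inside the very sum at issue: for $a=230$ and $n=3$, the index word $b=22323$ has maximal compatible sequence $11223$, so $\rev(\maxcomp(b))=122$, yet $f(b)=22121$ admits no compatible sequences at all. (Moreover, even if both of your claims were granted, your summand would become $\yfs_{\rev(\maxcomp(f(b)))}=\yfs_{\rev(\rev(\maxcomp(b)))}=\yfs_{\maxcomp(b)}$, which is not the summand in the statement.)

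The repair is that no reparametrization is needed: the two index sets are equal word-for-word. Indeed, $\rev(b)\sim\col(\keytab(\rev(a)))$ holds if and only if $\frev(\rev(b))=f(b)$ is Knuth equivalent to $\frev(\col(\keytab(\rev(a))))=\col_R(\keytab(a))\sim\col(\keytab(a))$, by Lemmas~\ref{lem:knuthfrev} and~\ref{lem:frevrighttoleft} and Proposition~\ref{prop:Knutheq}; that is, $\{b:\rev(b)\sim\col(\keytab(\rev(a)))\}=\{b:f(b)\sim\col(\keytab(a))\}$. Summing Proposition~\ref{prop:keytoslidecompatible} for $\rev(a)$ over this common index set and applying $I$ termwise, using $\fs_{\maxcomp(b)}(x_n,\ldots,x_1)=\yfs_{\rev(\maxcomp(b))}(x_1,\ldots,x_n)$ and $\key_{\rev(a)}(x_n,\ldots,x_1)=\ykey_a(x_1,\ldots,x_n)$, gives the Proposition immediately: $\maxcomp$ is only ever evaluated at the same word $b$ on both sides, so no relation between $\maxcomp(f(b))$ and $\maxcomp(b)$ is required (fortunately, since none exists). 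This mirrors the proof of Theorem~\ref{thm:ykeycompatible}, where the corresponding fact is the equality of sets of words $\{f(y):y\in Y\}=\{\rev(x):x\in X\}$, each word playing both roles.
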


\subsection{Quasi-key polynomials and fundamental particles}{\label{sec:SSF}} 
For a weak composition $a$, define the \emph{quasi-key fillings} $\QF(a)$ to be the (reverse) fillings of $D(a)$ satisfying the following conditions.
\begin{enumerate}
\item Entries weakly decrease from left to right in each row.
\item No entry in row $i$ is greater than $i$.
\item Entries strictly increase up the first column, and entries in any column are distinct.
\item All type A and type B triples are inversion triples.
\end{enumerate}
The \emph{quasi-key polynomial} is 
\[\qk_a = \sum_{T\in \QF(a)}x^{\wt(T)}.\]
Quasi-key polynomials were first defined in \cite{Assaf.Searles:2} as a lift of the quasisymmetric Schur functions to a basis of $\Poly_n$. The above formula is due to \cite{MPS18}. 
For example, we have $\qk_{103} = x^{103} +x^{112} +x^{202} +x^{121} +x^{211} +x^{130} +x^{220}$ which is computed by the quasi-key fillings shown in Figure~\ref{fig:QK103} below.
\begin{figure}[ht]
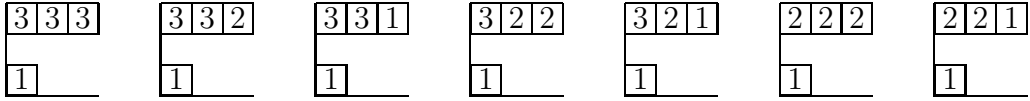

\begin{displaymath}
 \begin{array}{c@{\hskip2\cellsize}c@{\hskip2\cellsize}c@{\hskip2\cellsize}c@{\hskip2\cellsize}c@{\hskip2\cellsize}c@{\hskip2\cellsize}c@{\hskip2\cellsize}c@{\hskip2\cellsize}c@{\hskip2\cellsize}c}
 \vline \tableau{  3 & 3 & 3 \\ \\  1 \\  \hline }  &  \vline \tableau{  3 & 3 & 2 \\ \\  1 \\  \hline }  &   \vline \tableau{  3 & 3 & 1 \\ \\  1 \\  \hline }  &   \vline \tableau{  3 & 2 & 2 \\ \\  1 \\  \hline }  &   \vline \tableau{  3 & 2 & 1 \\ \\ 1 \\  \hline }  & \vline \tableau{  2 & 2 & 2 \\ \\  1 \\  \hline }  &   \vline \tableau{  2 & 2 & 1 \\ \\  1 \\  \hline }       \end{array}
\end{displaymath}
\caption{The 7 quasi-key fillings of shape $103$.}\label{fig:QK103}
\end{figure}

The set of fillings $\ASSF(a)$ generating Demazure atoms is exactly the subset of $\QF(a)$ consisting of those fillings whose entries in the leftmost column are equal to their row index. For example,
$\atom_{103} = x^{103} + x^{112} + x^{202} + x^{121} + x^{211}$, 
which is computed by those fillings in Figure~\ref{fig:QK103} whose leftmost column entries are $1$ and $3$.

Finally, define the \emph{particle fillings} $\LF(a)$ to be the subset of $\ASSF(a)$ consisting of those fillings such that whenever $i<j$, all entries in row $i$ are strictly smaller than all entries in row $j$. Then the \emph{fundamental particle $\fp_a$} \cite{Sea20} is defined to be
\[\fp_a = \sum_{T\in \LF(a)}x^{\wt(T)}.\]

For example, $\fp_{103} = x^{103} + x^{112} + x^{121}$, by the 1st, 2nd, and 4th fillings in Figure~\ref{fig:QK103}.

We give a new formula for $\fp_a$ in terms of compatible sequences. Let $S=\{p_1, p_2, \hdots , p_k \}$ be the set of the partial sums of the entries in $a$ with duplicate entries (obtained when an entry of $a$ is $0$) removed. Then we say that a compatible sequence $w$ for the word formed by writing $a_i$ instances of $i$ consecutively is \emph{$a$-flag compatible} if for all $p_i \in S$, the letter in position $p_i$ of $w$ is equal to the row index of the $i^{th}$ nonzero entry in $a$. 

\begin{theorem}
Let $a$ be a weak composition of length $n$,  Then $$\fp_a = \sum_{w \textrm{ is a-flag compatible}} x^{\comp(w)}.$$
\end{theorem}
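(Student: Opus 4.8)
The plan is to exhibit a weight-preserving bijection between the $a$-flag compatible sequences and the particle fillings $\LF(a)$, so that the claimed identity follows immediately from the definition of $\fp_a$ as the generating function of $\LF(a)$. First I would record the shape of both sides. A filling $T\in\LF(a)$ is a Demazure atom filling whose rows are ``row-strict separated'': every entry in row $i$ is strictly smaller than every entry in row $j$ whenever $i<j$, and whose first column entries equal their row indices. On the compatible-sequence side, the underlying word is $\mathbf{b}=\mathbf{b}(a)$ formed by writing $a_i$ copies of $i$ consecutively for each nonzero $a_i$, read in increasing order of $i$; a $\mathbf{b}$-compatible sequence $w$ is a weakly increasing word satisfying the strict-ascent condition at descents of $\mathbf{b}$ and the flag bound $w_k\le b_k$, and $a$-flag compatibility pins the value $w_{p_i}$ at each partial-sum position $p_i\in S$ to the row index of the $i$th nonzero part.

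Next I would describe the bijection explicitly. Given a particle filling $T$, read its entries to produce the weak composition $\comp$ via the weight $\wt(T)$; more usefully, I would map $T$ to the compatible sequence $w$ obtained by recording, in weakly increasing order, the multiset $\wt(T)$, and show this recording is forced to be $a$-flag compatible. Conversely, given an $a$-flag compatible $w$, I would build $T$ by placing in row $i$ (the row index of the $i$th nonzero part) the block of letters of $w$ sitting in the positions corresponding to the $i$th run of $\mathbf{b}$, arranged to decrease from left to right. The key structural inputs are: (i) the flag condition $w_k\le b_k$ together with the pinned partial-sum values guarantees that the first entry of each occupied row equals its row index and that row entries do not exceed the row index, matching conditions (2)–(3) of $\QF$ restricted to $\ASSF$; and (ii) the strict-increase-across-runs built into $a$-flag compatibility (forced because $\mathbf{b}$ strictly ascends between runs, so $w$ must too) is exactly the particle separation condition that distinguishes $\LF(a)$ inside $\ASSF(a)$. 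I would verify both maps are mutually inverse and that $x^{\comp(w)}=x^{\wt(T)}$, which is immediate since each letter of $w$ contributes one box of the corresponding value.

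The step I expect to be the main obstacle is verifying that the triple/column conditions of $\ASSF(a)$ are automatically satisfied by the filling $T$ produced from an $a$-flag compatible $w$, and conversely that every $T\in\LF(a)$ yields a genuinely $a$-flag compatible (not merely $\mathbf{b}$-compatible) sequence. The subtlety is that $\LF(a)$ is defined by the strong row-separation condition, which a priori is stronger than plain semistandardness; I would need to argue that under row separation the type A and type B inversion-triple conditions become vacuous or automatic, so that a particle filling is determined purely by which values appear in each row (a set partition of the letters into rows compatible with the flag bounds). Concretely, because entries strictly increase from each row to the next and columns are automatically strict, the only freedom is the multiset of values in each row, and the pinned first-column-equals-row-index condition together with the flag constraint translates precisely into the partial-sum pinning defining $a$-flag compatibility. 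Once this equivalence of constraints is established, the bijection and the weight-preservation are routine, and the identity $\fp_a=\sum_{w}x^{\comp(w)}$ follows.
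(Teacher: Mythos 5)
Your proposal is correct and follows essentially the same route as the paper: the same bijection (letters of $w$ in the positions of the $i$th run of $\mathbf{b}$ become row $i$, written in decreasing order, and conversely rows read bottom-to-top, right-to-left), with the flag bound and partial-sum pinning giving the $\ASSF$ conditions and the forced strict ascent between runs giving the row-separation defining $\LF(a)$. The ``main obstacle'' you flag is resolved exactly as in the paper: row separation makes the column-distinctness and type A/B triple conditions automatic.
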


\begin{proof}
The statement follows from the fact that the $a$-flag compatible sequences correspond to $\LF(a)$ via the following bijection.  Let $w$ be an $a$-flag compatible sequence and let $\tilde{w}^{(i)}$ be the subword of $w$ corresponding to the subword $a^{(i)}$.  Construct the $i^{th}$ row of a $\LF$ by writing $\tilde{w}^{(i)}$ in weakly decreasing order.  Conditions (1),(2), and (3) in the definition of a $\QF$ are satisfied by construction.  Condition (4) is satisfied since the entries in a given row are all smaller than all of the entries in any higher row.  The flag condition guarantees that these fillings are in $\ASSF(a)$, and further, the fact that the entries in a given row are all smaller than all of the entries in any higher row implies these fillings are in $\LF(a)$.  To obtain an $a$-flag compatible sequence from an element of $\LF(a)$, record the entries in each row from right to left (to force them to be weakly increasing), reading rows from bottom to top.
\end{proof}

Figure~\ref{fig:poset} below shows how the bases discussed here expand into one another. An arrow indicates that the basis at the tail expands positively in the basis at the head. This figure is taken from that in \cite{Sea20}.

\begin{figure}[ht]
\[
\begin{tikzcd}
 \key_a \arrow[rr,"{\rm [AS18]}"] & & \qk_a  \arrow[rr,"{\rm [AS18]}"] \arrow[d, "{\rm [Sea20]}"] & & \fs_a  \arrow[rr,"{\rm [AS17]}"] \arrow[d, "{\rm [Sea20]}"] & & \msp_a \arrow[d] &  \\
 & & \atom_a  \arrow [rr,"{\rm [Sea20]}"] & & \fp_a \arrow[rr] & & x^a  &
\end{tikzcd}
\]
  \caption{Positive expansions between bases defined by reverse fillings.}\label{fig:poset}
\end{figure}
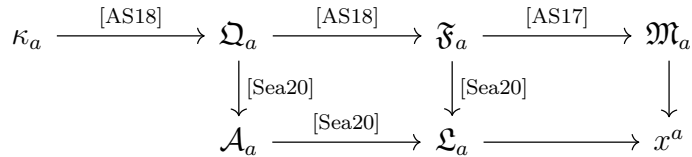

\subsection{Young bases and intersections}

Young analogues may be defined for all the families described above. Indeed, Young analogues of the fundamental slide polynomials and the quasi-key polynomials were introduced and utilized in~\cite{MasSea20}.  In addition to the Young key polynomials and Young Demazure atoms studied in Section 3, Young analogues of the monomial slide polynomials and fundamental particles may be defined similarly, and these families can be shown (by utilizing Lemma~\ref{lem:flipreversediagram} below) to exhibit positive expansions in Figure~\ref{fig:yposet} analogous to those shown in Figure~\ref{fig:poset}.

\begin{figure}[ht]
\[
\begin{tikzcd}
 \ykey_a \arrow[rr] & & \yqk_a  \arrow[rr] \arrow[d] & & \yfs_a  \arrow[rr] \arrow[d] & & \yms_a \arrow[d] &  \\
 & & \ya_a  \arrow [rr] & & \yfp_a \arrow[rr] & & x^a  &
\end{tikzcd}
\]
  \caption{Positive expansions between bases defined by Young fillings.}\label{fig:yposet}
\end{figure}

\begin{remark}\label{rmk:Youngbases}
All of the families of Young polynomials listed in Figure~\ref{fig:yposet} are bases for $\Poly_n$, since their reverse analogues are bases for $\Poly_n$ and the flip-and-reverse process is an involution on $\Poly_n$ that preserves both cardinality and linear independence, cf. Proposition~\ref{prop:youngkeybasis}.
\end{remark}

\begin{lemma}\label{lem:flipreversediagram}
Let $a$ be a weak composition of length $n$, and let $\Fill_a$ denote the set of all possible fillings of $D(a)$ with entries from $1,\ldots , n$, one entry per box. Define $\theta: \Fill_a \rightarrow \Fill_{\rev(a)}$ by letting $\theta(T)$ be the filling obtained by moving all boxes in row $i$ to row $n+1-i$ and replacing every entry $j$ with $n+1-j$, for all $1\le i,j \le n$. Then the following statements are true.
\begin{enumerate}
\item The map $\theta$ is an involution.
\item If $T$ has weight $b$ then $\theta(T)$ has weight $\rev(b)$.
\item The relative order of entries in row $i$ of $T$ is the reverse of the relative order of entries in row $i$ of $\theta(T)$.
\item The relative order of entries in any column of $T$ is the same as the relative order of entries in the same column of $\theta(T)$.
\item A triple of boxes in $T$ is an inversion triple if and only if the image of those boxes is a Young inversion triple in $\theta(T)$.
\end{enumerate} 
\end{lemma}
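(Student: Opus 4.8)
The plan is to factor $\theta$ as the composition of two commuting involutions: the \emph{flip} $\phi$, which replaces each entry $j$ by $n+1-j$ while leaving every box where it is, and the \emph{reverse} $\rho$, which moves the boxes of row $i$ to row $n+1-i$ without altering any entry or changing the column of any box. Since $\phi^2=\rho^2=\mathrm{id}$ and the two operations act on independent data (entry values versus vertical position), $\theta=\phi\rho=\rho\phi$ is an involution, which is (1). For (2) I would simply count: the number of boxes of $\theta(T)$ carrying the entry $i$ equals the number of boxes of $T$ carrying $n+1-i$, since only $\phi$ affects entries; hence $\wt(\theta(T))_i=\wt(T)_{n+1-i}=b_{n+1-i}$, which is exactly $\rev(b)_i$.

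For (3) and (4) the key observation is that neither $\phi$ nor $\rho$ permutes boxes within a row horizontally, so each box keeps its column and the boxes of one row of $T$ form a single row of $\theta(T)$ in the same left-to-right column order. As $\phi$ is strictly order-reversing on $\{1,\ldots,n\}$, reading such a row left to right reverses all order relations among its entries, giving (3). For (4) I would track a fixed column $c$: the operation $\rho$ reverses the bottom-to-top order of the boxes in that column (since $i\mapsto n+1-i$ is order-reversing on row indices), while $\phi$ reverses the order of their values; composing two order-reversals restores the original order, so the bottom-to-top relative order of entries in each column is preserved. This is precisely why $\theta$ respects the strictly-increasing first-column condition and the distinct-column-entries condition.

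The substance of the lemma is (5), and this is where I expect the main difficulty. The plan is a case check matching each reverse triple type (Figure~\ref{fig:reversetriples}) to a Young triple type (Figure~\ref{fig:Youngtriples}). Consider a Type A triple of $T$: boxes $z,x$ adjacent in a lower row $r_1$ (with $z$ left of $x$) and $y$ above $x$ in a higher row $r_2>r_1$, the lower row weakly longer. Under $\theta$ the boxes $z,x$ stay adjacent in their row (now $n+1-r_1$), while $y$ lands in row $n+1-r_2$, which is \emph{below} row $n+1-r_1$ since $r_1<r_2$; thus $y$ sits below the right box, producing a Type I configuration. The length comparison transfers too: in $\theta(T)$ the shape is $\rev(a)$, so the row now carrying $z,x$ has length $a_{r_1}$ and the row carrying $y$ has length $a_{r_2}$, and "lower row weakly longer" ($a_{r_1}\ge a_{r_2}$) becomes exactly the Type I condition "higher row weakly longer". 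An identical analysis sends Type B triples to Type II triples, with "higher row strictly longer" becoming "lower row strictly longer".

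It remains to match the inversion conditions, which is the crux. Writing $\bar e=n+1-e$, a Type A triple fails to be an inversion triple exactly when $z\ge y\ge x$; its image has top-left, top-right, and bottom entries $\bar z,\bar x,\bar y$, so in the Young labelling the forbidden chain $\bar x\ge\bar y\ge\bar z$ reads $n+1-x\ge n+1-y\ge n+1-z$, i.e.\ $z\ge y\ge x$. Hence the image is a Young non-inversion triple precisely when the original is a reverse non-inversion triple, and negating both sides yields the stated equivalence; since $\theta$ is an involution the converse direction comes for free, and the Type B/Type II case is identical. The main obstacle is the bookkeeping: one must confirm that under $\theta$ the two adjacent boxes retain their left-right roles while the third box flips from above to below (or vice versa), and simultaneously verify that the weakly/strictly-longer row hypotheses transfer correctly, so that the flip's order-reversal on entries matches exactly the passage between the reverse condition $z\ge y\ge x$ and the Young condition $x\ge y\ge z$.
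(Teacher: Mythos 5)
Your proof is correct and takes essentially the same approach as the paper's: the paper declares properties (1)--(4) immediate from the definition of $\theta$ and deduces (5) from the fact that $\theta$ reverses the relative order of entries within a triple, which is exactly the content of your case analysis. You have simply carried out in full the details (the flip/reverse factorization, the Type A $\leftrightarrow$ Type I and Type B $\leftrightarrow$ Type II matching with the row-length hypotheses, and the complementation of the chain $z\ge y\ge x$) that the paper leaves to the reader.
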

\begin{proof}
The first four properties are immediate from the definition of $\theta$. Since the relative order of entries in the boxes of a triple in $T$ is the reverse of the relative order of entries in the images of those boxes in $\theta(T)$, it follows from the definition of inversion triples and Young inversion triples that the image of an inversion triple (of type A, respectively B) in $T$ must be a Young inversion triple (of type I, respectively II) in $\theta(T)$. Likewise, the images of non-inversion triples in $T$ are Young non-inversion triples in $\theta(T)$. 
 \end{proof}

Given a weak composition $a$ of length $n$, define the \emph{Young fundamental fillings} $\YFF(a)$ of $a$ to be the fillings of $D(a)$ with entries from $1, \ldots , n$ satisfying the following conditions.
\begin{enumerate}
\item Entries weakly increase from left to right in each row
\item No entry in row $i$ is less than $i$
\item If a box with label $b$ is in a lower row than a box with label $c$, then $b<c$.
\end{enumerate}
In particular, $\YFF(a)$ is the image of $\FF(\rev(a))$ under $\theta$. The \emph{Young fundamental slide polynomial} $\yfs_a$~\cite{MasSea20} is the generating function of $\YFF(a)$:
\[\yfs_a = \sum_{T\in \YFF(a)}x^{\wt(T)}.\]

For example, we have $\yfs_{301}  = x^{301} + x^{211} +x^{121} +x^{031}$, which is computed by the elements of $\YFF(301)$ shown below.
\begin{displaymath}
 \begin{array}{c@{\hskip3\cellsize}c@{\hskip3\cellsize}c@{\hskip3\cellsize}c@{\hskip3\cellsize}c@{\hskip3\cellsize}c@{\hskip3\cellsize}c}
 \vline \tableau{ 3 \\ \\ 1 & 1 & 1 \\ \hline } &   \vline \tableau{ 3 \\ \\ 1 & 1 & 2 \\ \hline } &  \vline \tableau{ 3 \\ \\ 1 & 2 & 2 \\ \hline } &  \vline \tableau{ 3 \\ \\ 2 & 2 & 2 \\ \hline }  \end{array}
\end{displaymath}

For a weak composition $a$ of length $n$, define the \emph{Young monomial fillings} $\YMF(a)$ to be the subset of $\YFF(a)$ for which all entries in any row are equal. Define the \emph{Young monomial slide polynomial} $\yms_a$ to be the generating function of $\YMF(a)$:
\[\yms_a = \sum_{T\in \YMF(a)}x^{\wt(T)}.\]

For example, we have $\yms_{301} = x^{301} +x^{031}$. 

\begin{proposition}
The Young fundamental slide and the Young monomial slide bases of $\mathbb{Z}[x_1,\ldots , x_n]$ contain (respectively) the fundamental quasisymmetric and monomial quasisymmetric bases of quasisymmetric polynomials in $n$ variables. Specifically, if $a$ is a weak composition of length $n$ such that all zero entries are to the right of all nonzero entries, then
\[\yfs_a = F_{\flatten(a)}(x_1,\ldots , x_n) \qquad \mbox{ and } \qquad \yms_a = M_{\flatten(a)}(x_1,\ldots , x_n),\]
where $\flatten(a)$ is the composition obtained by deleting all $0$ parts of $a$. 
\end{proposition}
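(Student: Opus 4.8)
The plan is to show that, under the stated hypothesis on $a$, the set $\YFF(a)$ coincides exactly with the set of fundamental Young composition tableaux of shape $\flatten(a)$ with entries at most $n$, and likewise $\YMF(a)$ with the monomial Young composition tableaux of that shape. The result will then follow immediately from Proposition~\ref{prop:YoungFisF}, which identifies the generating functions of these tableaux as $F_{\flatten(a)}$ and $M_{\flatten(a)}$ respectively.

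First I would record the structural observation that, because all zero parts of $a$ lie to the right of the nonzero parts, the nonempty rows of $D(a)$ are precisely rows $1, \ldots , k$ where $k = \ell(\flatten(a))$, and these carry exactly the boxes of $D(\flatten(a))$. Since the empty top rows contain no boxes, a filling of $D(a)$ is literally the same object as a filling of $D(\flatten(a))$, with the same weight. Comparing definitions directly, condition (1) of $\YFF(a)$ (weakly increasing rows) together with condition (3) (any entry in a lower row is strictly smaller than any entry in a higher row) are exactly the two conditions defining fundamental Young composition tableaux of shape $\flatten(a)$.

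The key step is to verify that condition (2) of $\YFF(a)$, namely that no entry of row $i$ is smaller than $i$, is automatic once (1) and (3) hold, given the hypothesis on $a$. The point is that rows $1, \ldots , i-1$ are all nonempty for each $i \le k$. Choosing one entry from each of rows $1, \ldots , i$ and applying condition (3) repeatedly produces a strictly increasing chain of $i$ positive integers ending at an arbitrarily chosen entry of row $i$; hence that entry is at least $i$. So (2) holds automatically, giving the desired equality of sets $\YFF(a) = $ fundamental Young composition tableaux of shape $\flatten(a)$ (entries at most $n$), and Proposition~\ref{prop:YoungFisF} then yields $\yfs_a = F_{\flatten(a)}$.

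For the monomial statement I would simply restrict the identification above to fillings whose rows are constant: $\YMF(a)$ is by definition the subset of $\YFF(a)$ with all entries in each row equal, which under the identification matches the monomial Young composition tableaux of shape $\flatten(a)$, so the monomial half of Proposition~\ref{prop:YoungFisF} gives $\yms_a = M_{\flatten(a)}$. I expect the only genuine content to be the redundancy of condition (2); the remainder is an unwinding of definitions. The subtlety to flag is that this redundancy truly requires the zeros to sit on the right: were a zero part interleaved among nonzero parts, the chain argument would break, condition (2) would impose a real restriction, and the stated equalities would fail.
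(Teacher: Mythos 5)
Your proof is correct. It is worth noting how it sits relative to the paper's own argument: the paper proves only the monomial half directly, observing (as you do) that the flag condition on $\YMF(a)$ is automatic when the zero parts of $a$ all lie to the right, and it disposes of the fundamental half entirely by citation to earlier work of the authors. You instead run the same redundancy argument at the level of $\YFF(a)$ — the strictly increasing chain through one entry of each of rows $1,\ldots,i$ forces every entry of row $i$ to be at least $i$ — which identifies $\YFF(a)$ with the fundamental Young composition tableaux of shape $\flatten(a)$, and then the monomial case falls out by restricting to constant rows. This buys a self-contained proof of the whole proposition from Proposition~\ref{prop:YoungFisF} alone, with the monomial statement as a corollary of the fundamental identification rather than a separate verification; the paper's route is shorter on the page but leans on an external reference for the harder half. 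Your closing remark about why the hypothesis on $a$ is genuinely needed (an interleaved zero breaks the chain, as in $\yfs_{0101}$) is also consistent with the paper's Remark on non-embedding and correctly locates where the argument would fail.
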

\begin{proof}
This is shown in \cite{MasSea20} for Young fundamental slides. For monomial slides, since all nonzero entries of $a$ occur before all zero entries, the flag condition on $\YMF$ is always satisfied whenever the other conditions are satisfied. Hence the $\YMF$ are exactly the monomial Young composition tableaux (Proposition~\ref{prop:YoungFisF}).
\end{proof}

\begin{theorem}{\label{thm:intersectionsFM}}
The polynomials in $\mathbb{Z}[x_1,\ldots , x_n]$ that are both a fundamental (respectively, monomial) slide polynomial and a Young fundamental (respectively, monomial) slide polynomial are exactly the fundamental (respectively, monomial) quasisymmetric polynomials in $n$ variables. 

In other words, $\{\fs_a\}\cap \{\yfs_b\} = \{F_\alpha(x_1, \ldots , x_n)\}$ and $\{\msp_a\}\cap \{\yms_b\} = \{M_\alpha(x_1, \ldots , x_n)\}$.
\end{theorem}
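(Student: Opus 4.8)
The plan is to prove both inclusions, treating the fundamental case in detail; the monomial case is entirely parallel with $M$, $\msp$, $\yms$ in place of $F$, $\fs$, $\yfs$. For the inclusion $\supseteq$, I would first record the reverse analogue of the Proposition preceding the theorem, namely that $\fs_a = F_{\flatten(a)}$ whenever all zero parts of $a$ lie to the left of all nonzero parts. This follows from that Proposition together with the flip-and-reverse identity $\yfs_c(x_1,\ldots,x_n) = \fs_{\rev(c)}(x_n,\ldots,x_1)$ (which in turn comes from Lemma~\ref{lem:flipreversediagram}) and the invariance $F_\beta(x_1,\ldots,x_n)=F_{\rev(\beta)}(x_n,\ldots,x_1)$. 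Granting this, any fundamental quasisymmetric polynomial $F_\alpha(x_1,\ldots,x_n)$ with $\ell(\alpha)\le n$ can be written simultaneously as $\fs_{0^{n-\ell(\alpha)}\times \alpha}$ (zeros padded on the left) and as $\yfs_{\alpha\times 0^{n-\ell(\alpha)}}$ (zeros padded on the right), placing it in the intersection.

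For the reverse inclusion $\subseteq$, suppose $\fs_a = \yfs_b$; I would identify the lexicographically smallest exponent of this common polynomial computed from each side. On the $\fs$ side, I claim this exponent is exactly $a$: since every entry in row $i$ of a fundamental filling is at most $i$, a partial-sum count shows $\sum_{v\le m}\wt(T)_v \ge a_1+\cdots+a_m$ for every filling $T\in\FF(a)$ and every $m$, with equality for all $m$ only for the filling whose every entry equals its row index; hence every monomial of $\fs_a$ is lexicographically weakly larger than $x^a$. On the $\yfs$ side, I claim the lex-smallest exponent has all its zero entries to the left of all its nonzero entries. The point is that in a Young fundamental filling every entry in row $i$ is at least $i$, and the flag condition forces the entries of distinct nonempty rows to occupy disjoint, strictly increasing bands of values; consequently, if $k$ denotes the number of nonempty rows, the unique filling using only the values $\{n-k+1,\ldots,n\}$ (each nonempty row collapsed to a single top-band value) realizes the lexicographic minimum, giving the exponent $0^{n-k}\times \flatten(b)$.

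Equating the two descriptions of the lex-smallest exponent forces $a = 0^{n-k}\times\flatten(b)$; in particular all zero parts of $a$ precede all nonzero parts, so by the reverse analogue of the Proposition the common polynomial equals $F_{\flatten(a)}$ and is therefore a fundamental quasisymmetric polynomial. The monomial statement follows by running the identical argument with monomial fillings, where each row is constant. I expect the main obstacle to be the structural identification of the lex-extremal monomials --- especially verifying that the lex-minimal monomial of $\yfs_b$ has the zeros-to-the-left shape, which rests on the observation that the nonempty rows occupy disjoint increasing value-bands, so that the lex-minimizing filling is uniquely the one collapsing each row to its highest available value; once these extremal monomials are pinned down, both inclusions close quickly.
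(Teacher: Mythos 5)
Your proposal is correct, and while the easy containment (padding $\alpha$ with zeros on the left to get $\fs_{0^{n-\ell(\alpha)}\times\alpha}$ and on the right to get $\yfs_{\alpha\times 0^{n-\ell(\alpha)}}$) coincides with the paper's, your hard direction takes a genuinely different route. The paper argues by a local exchange: if $\fs_a$ is not quasisymmetric, then $a$ has an earliest zero entry $a_j$ with $a_{j-1}$ nonzero; letting $\overline{a}$ be the swap of these two entries, the paper notes that $x^a$ appears in $\fs_a$ while $x^{\overline{a}}$ does not, whereas any Young fundamental slide polynomial containing $x^a$ must also contain $x^{\overline{a}}$, so $\fs_a$ equals no $\yfs_b$. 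You instead run a global extremal argument: the bound ``entries in row $i$ are at most $i$'' gives $\sum_{v\le m}\wt(T)_v\ge a_1+\cdots+a_m$ for every $T\in\FF(a)$, so the lexicographically least exponent of $\fs_a$ is $a$ itself, while on the Young side the least exponent of $\yfs_b$ is $0^{n-k}\times\flatten(b)$, where $k$ is the number of nonzero parts of $b$; equating the two forces all zeros of $a$ to the left of its nonzero parts, whence $\fs_a=F_{\flatten(a)}(x_1,\ldots,x_n)$. Both arguments are sound; the paper's swap is shorter but leaves the closure of Young slide monomials under the swap as an unproved assertion, while yours costs two dominance computations but is fully self-contained and additionally shows that the indexing weak composition can be read off a slide polynomial as its lex-minimal exponent, a fact of independent use. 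One attribution should be corrected in your write-up: the ``disjoint, strictly increasing bands of values'' occupied by the nonempty rows of a Young fundamental filling come from condition (3) of the definition of $\YFF(b)$ (a label in a lower row is strictly smaller than any label in a higher row), not from the flag condition (2) (entries in row $i$ are at least $i$); the flag condition is needed only to verify that your lex-minimizing filling, which puts the constant value $n-k+j$ in the $j$th nonempty row from the bottom, is admissible. Finally, your reduction of the monomial case is legitimate precisely because both extremal fillings have constant rows, so they lie in $\MF$ and $\YMF$ respectively, and the dominance bounds persist on passing to these subsets.
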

\begin{proof}
We prove this in the fundamental case; the monomial case is completely analogous. First, let $\alpha$ be a composition of length $\ell(\alpha) \le n$. Then 
\[F_\alpha(x_1, \ldots x_n) = \fs_{0^{n-\ell(\alpha)}\times \alpha} = \yfs_{\alpha\times 0^{n-\ell(\alpha)}}.\]
For the other direction, let $\fs_a$ be a fundamental slide polynomial that is not equal to $F_\alpha(x_1,\ldots , x_n)$ for any composition $\alpha$. This implies $a$ has a zero entry to the right of a nonzero entry (\cite{Assaf.Searles}). Let $a_j$ be the earliest such zero entry, so $a_{j-1}$ is nonzero. Let $\overline{a}$ denote the weak composition obtained by exchanging the entries $a_{j-1}$ and $a_j$. Then $x^a \in \fs_a$ and $x^{\overline{a}} \notin \fs_a$. However, if a Young fundamental slide polynomial contains $x^a$, it must also contain $x^{\overline{a}}$. Hence $\fs_a$ is not equal to any Young fundamental slide polynomial.
\end{proof}

For a weak composition $a$ of length $n$, define the \emph{Young quasi-key fillings} $\YQF(a)$ to be the (Young) fillings of $D(a)$ obtained by applying $\theta$ to $\QF(\rev(a))$. Specifically, these are the fillings such that entries increase along rows, entries are at least their row index, entries strictly increase up the first column and entries in any column are distinct, and all type I and II Young triples are Young inversion triples.  These generate the \emph{Young quasi-key polynomial} $\yqk_a$ \cite{MasSea20}. Unsurprisingly, the conditions governing the intersections of quasi-key and Young quasi-key polynomials are similar to those governing the intersections of the quasisymmetric bases that they extend (Theorem~\ref{thm:yqsqs}).

\begin{theorem}\label{thm:intersectionsQKYQK}
The polynomials that are both quasi-key and Young quasi-key polynomials are precisely the $\yqk_a$ such that $a$ is a number of equal parts followed by zeros, or a sequence of $1$'s and $2$'s followed by zeros, or $a$ has no zero parts and consecutive parts differ by at most $1$.
\end{theorem}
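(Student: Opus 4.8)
The plan is to reduce the statement to the already-established intersection of quasisymmetric Schur and Young quasisymmetric Schur polynomials (Theorem~\ref{thm:yqsqs}), exploiting the fact that quasi-key and Young quasi-key polynomials are ``polynomial lifts'' of these quasisymmetric bases. The two directions of the biconditional require separate arguments, and I expect the bulk of the work to be the forward direction (ruling out coincidences).

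First I would handle the easy direction: if $a$ is a number of equal parts followed by zeros, or a string of $1$'s and $2$'s followed by zeros, or $a$ has no zero parts and consecutive parts differ by at most $1$, then I claim $\yqk_a = \qk_b$ for an appropriate $b$. In the first two cases, $a$ has all its nonzero entries to the left of all zeros, so $\flatten(a)=\alpha$ is a genuine composition of length $\ell(\alpha)$; by the same embedding used in Theorem~\ref{thm:intersectionsFM}, one expects $\yqk_a = \yqs_\alpha(x_1,\ldots,x_n)$ lifts $\yqs_\alpha$ and similarly $\qk_{0^{n-\ell(\alpha)}\times\rev(\alpha)}$ lifts $\qs_{\rev(\alpha)}$; the coincidence then follows from the corresponding coincidence of $\yqs_\alpha$ and $\qs_{\rev(\alpha)}$ guaranteed by the ``all equal'' and ``all parts $1$ or $2$'' cases of Theorem~\ref{thm:yqsqs} (after accounting for the reversal in Proposition~\ref{prop:yqstoqs}). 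In the third case ($a$ has no zero parts, so $n=\ell(a)$, consecutive parts differing by at most $1$), one expects $\yqk_a$ and $\qk_a$ to \emph{both} collapse to objects controlled by the $n=\ell(\alpha)$ clause of Theorem~\ref{thm:yqsqs}; I would verify directly that the filling sets $\YQF(a)$ and $\QF(a)$ match up under the constraint, using the flip-and-reverse map $\theta$ of Lemma~\ref{lem:flipreversediagram}.

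For the converse, suppose $\yqk_a = \qk_b$. The strategy is to first pin down the relationship between $a$ and $b$, then rule out all shapes $a$ outside the three listed families. To pin down $a$ and $b$, I would argue as in Theorem~\ref{thm:atomyatom}: since entries cannot repeat in any column in either $\YQF$ or $\QF$, the maximal degree of a single variable forces the longest rows of $D(a)$ and $D(b)$ to have equal length, then the next-longest, and so on, so $b$ must be a rearrangement of $a$; one then expects that the leading monomial or first-column data forces $a$ and $b$ into a specific correspondence (likely $b$ determined by $a$). Once the shape is controlled, I would show that if $a$ falls outside all three families then $\yqk_a$ and $\qk_b$ must differ, by exhibiting a monomial present in one but not the other. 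The natural source of such a monomial is a filling of $D(a)$ that has no counterpart of equal weight among the reverse fillings generating $\qk_b$, constructed by a local modification (changing the rightmost occurrence of some entry, as in the proof of Theorem~\ref{thm:atomyatom}) that respects all Young triple conditions but cannot be matched on the reverse side.

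The main obstacle I anticipate is the converse, specifically organizing the case analysis cleanly. Unlike the atom case, quasi-key polynomials carry the full type I/II triple conditions and the increasing-first-column condition, so the set of legal fillings is genuinely more subtle, and the presence or absence of trailing zeros interacts nontrivially with whether $a$ behaves like a ``quasisymmetric'' shape. The cleanest route is probably to leverage Theorem~\ref{thm:yqsqs} as a black box for the zero-free (equivalently $n=\ell(a)$) situation and to reduce the general case to it: when $a$ has a zero strictly between nonzero parts, or has trailing zeros together with a part exceeding $2$, I would isolate a triple straddling the offending zero row and produce a weight that the reverse model cannot realize, thereby forcing $a$ into one of the three stated families. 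The delicate point is ensuring the exhibited distinguishing monomial truly fails to appear on the $\qk_b$ side across \emph{all} rearrangements $b$ of $a$, which is where the column-content invariance established in the shape-matching step does the essential work.
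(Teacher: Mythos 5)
Your high-level plan (reduce the coincidence question to Theorem~\ref{thm:yqsqs}) is the right one, and your internal-zero step is essentially the paper's first move. But your converse contains a concrete error: you propose to handle the case ``trailing zeros together with a part exceeding $2$'' by exhibiting a distinguishing monomial, yet that case contains genuine coincidences. Take $a=(3,3,0)$: it has trailing zeros and a part exceeding $2$, but it lies in the first allowed family (equal parts followed by zeros), and indeed $\yqk_{330}=\yqs_{33}(x_1,x_2,x_3)=\qs_{33}(x_1,x_2,x_3)=\qk_{033}$, so no distinguishing monomial exists and that branch of your argument cannot be carried out. The root cause is that you invoke Theorem~\ref{thm:yqsqs} only ``as a black box for the zero-free situation,'' whereas its clauses for $\ell(\alpha)<n$ (``all parts equal'' and ``all parts $1$ or $2$'') are exactly what is needed to sort the trailing-zero shapes. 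The paper's route avoids this entirely: after excluding internal zeros (by the $x^a$-plus-local-modification argument, which works against an \emph{arbitrary} $\qk_b$ containing $x^a$, with no need to first prove $b$ rearranges $a$), one has $\yqk_a=\yqs_{\flatten(a)}(x_1,\ldots,x_n)$, hence any $\qk_b$ equal to it must be quasisymmetric; by the known classification the quasisymmetric quasi-key polynomials are precisely the quasisymmetric Schur polynomials $\qk_{0^{n-\ell(\beta)}\times \beta}=\qs_\beta(x_1,\ldots,x_n)$, and then Theorem~\ref{thm:yqsqs} settles \emph{all} remaining cases, with or without trailing zeros, in one stroke. This reduction also dissolves your acknowledged ``delicate point'' about checking all rearrangements $b$ of $a$: quasisymmetry pins down $b$ automatically.

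Two smaller corrections. In your easy direction the partner of $\yqk_a$ is $\qk_{0^{n-\ell(\alpha)}\times\alpha}$, not $\qk_{0^{n-\ell(\alpha)}\times\rev(\alpha)}$: Theorem~\ref{thm:yqsqs} permits $\yqs_\alpha=\qs_\beta$ only when $\alpha=\beta$, and Proposition~\ref{prop:yqstoqs} is an identity after reversing the \emph{variables}, not an equality of polynomials, so it cannot be used to trade $\alpha$ for $\rev(\alpha)$. (This is harmless when all parts are equal, since then $\rev(\alpha)=\alpha$, but for a prefix of $1$'s and $2$'s it matters: $\yqs_{12}=\qs_{12}\neq\qs_{21}$.) Second, your preliminary step of pinning $b$ down as a specific rearrangement of $a$ is vaguer and harder than needed; note that in the actual coincidences $b$ is obtained from $a$ by moving the zeros to the front while keeping the nonzero parts in order (not by reversal), and the paper never needs to identify $b$ combinatorially at all.
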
 
\begin{proof}
For any $a$, the polynomial $\yqk_a$ contains the monomial $x^a$, realised by $T\in \YQF(a)$ whose entries in row $j$ are all $j$. Suppose a quasi-key polynomial $\qk_b$ contains $x^a$, realised by some $S\in \QF(b)$. Suppose $a$ has a zero entry preceding a nonzero entry, e.g., $a_i=0$ but $a_{i+1}$ is nonzero. Create $S'$ by changing the rightmost $i+1$ in $S$ to an $i$. Since we change the rightmost $i+1$, entries of $S'$ still decrease along rows, and since no other $i$'s exist in $S$, entries still strictly increase up the first column of $S'$ and do not repeat in any column of $S'$, and the relative order of the entries in any triple in $S$ remains unchanged. Hence $S'\in \QF(b)$, but there is no element of $\YQF(a)$ that has this weight since all entries of $T$ are already minimal possible. Therefore $\yqk_a\neq \qk_b$ for any $b$.

It follows that for $\yqk_a$ to be equal to $\qk_b$, $a$ must consist of an interval of nonzero entries, followed by zero entries. But then $\yqk_a = \yqs_\alpha(x_1, \ldots , x_n)$ by [MS20]. The quasi-key polynomials that are quasisymmetric are exactly the quasisymmetric Schur polynomials: $\qk_b = \qs_\beta(x_1, \ldots , x_n)$ where $b$ is an interval of zero entries followed by an interval $\beta$ of nonzero entries [AS18]. Then, by Theorem~\ref{thm:yqsqs},  $\yqs_\alpha(x_1, \ldots , x_n)$ is equal to $\qs_\beta(x_1, \ldots , x_n)$ exactly when $\alpha$ has all parts the same, or all parts of $\alpha$ are $1$ or $2$, or $\ell(\alpha)=n$ (so $a=\alpha$ has no zero parts) and consecutive parts differ by at most $1$.
\end{proof}

Similarly, define the \emph{Young particle fillings} $\YLF(a)$ to be the image of $\LF(\rev(a))$ under $\theta$. These Young fillings, which are the $\YASSF(a)$ such that any entry in a lower row is strictly smaller than any entry in a higher row, generate the \emph{Young fundamental particle} $\yfp_a$.

\begin{theorem}
The polynomials that are both fundamental particles and Young fundamental particles are precisely the $\yfp_a$ such that $a$ has no zero part adjacent to a part of size at least $2$.
\end{theorem}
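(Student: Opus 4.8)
The plan is to reduce the whole statement to the single-monomial case, using the fact that, under the stated hypothesis, $\fp_a$ and $\yfp_a$ both collapse to $x^a$. Before either direction I would record three structural facts about the two particle families. Since in any filling of $\LF(a)$ rows weakly decrease from a first entry equal to the row index, every entry of row $j$ lies in $\{1,\dots,j\}$; dually, in any filling of $\YLF(a)$ rows weakly increase, so every entry of row $j$ lies in $\{j,\dots,n\}$. Combined with the strict row-stacking (every entry of a lower row is strictly smaller than every entry of a higher row), this yields: (i) a \emph{first-column invariant} --- the first-column entries are exactly the nonzero row indices of $a$, so every monomial of $\fp_a$ or $\yfp_a$ has support containing $\operatorname{supp}(a)$; and (ii) a \emph{value-confinement} statement --- if $a_i>0$ then the value $i$ can occur only in row $i$ (in $\fp_a$ because rows below use entries $<i$ while rows above must strictly exceed the $i$ in row $i$, and symmetrically in $\yfp_a$), so the top power of $x_i$ occurring is exactly $a_i$. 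I would also note that the base filling with each row $j$ constant equal to $j$ gives $x^a$ in both families, and that raising an entry of a $\YLF(a)$-filling strictly lowers its weight in dominance order while lowering an entry of an $\LF(a)$-filling strictly raises it; hence $x^a$ is the dominance-maximal monomial of $\yfp_a$ and the dominance-minimal monomial of $\fp_a$.

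For the forward inclusion I would assume $a$ has no zero part adjacent to a part of size at least $2$ and show $\fp_a=\yfp_a=x^a$. A nonzero row with $a_j=1$ has its only cell in the first column and is forced to $j$. For a row with $a_j\ge 2$, the hypothesis forces the neighbouring positions $a_{j-1}$ and $a_{j+1}$ (where present) to be nonzero: the nonzero row immediately below then forces the entries of row $j$ to be $\ge j$ in $\LF(a)$, and the nonzero row immediately above forces them to be $\le j$ in $\YLF(a)$, which together with the first-entry value and the variable bound $n$ (handling topmost rows) pins every entry to $j$. Thus both fillings sets are singletons and the two polynomials coincide.

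For the converse I would assume $\yfp_a=\fp_b$ and first prove $a=b$. From (i), since $x^a\in\fp_b$ and $x^b\in\yfp_a$, support containment runs both ways and $\operatorname{supp}(a)=\operatorname{supp}(b)=:S$. From (ii), for each $i\in S$ the top power of $x_i$ in the common polynomial equals $a_i$ and equals $b_i$, while $a_i=b_i=0$ for $i\notin S$, giving $a=b$. Then $\yfp_a=\fp_a$, and because $x^a$ is simultaneously dominance-maximal on the left and dominance-minimal on the right, the common polynomial must be the single monomial $x^a$. Finally, if some part $a_j\ge 2$ had a zero immediately above it, raising the rightmost entry of row $j$ in the base filling would produce a second element of $\YLF(a)$ whose weight is strictly dominated by $a$, hence absent from $\fp_a$; a symmetric move using $\LF(a)$ handles a zero immediately below a part $\ge 2$. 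So no zero is adjacent to a part of size at least $2$.

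The step I expect to be the crux is proving $a=b$. It is tempting to imagine a genuinely multi-term coincidence $\yfp_a=\fp_b$ with $a\neq b$ (with $a$ the dominance top and $b$ the dominance bottom), and ruling this out is exactly what forces the outcome to be a single monomial. The decisive ingredients are the first-column support invariant together with value-confinement, and I would take care that confinement of a nonzero value $i$ to row $i$ is precisely where the strict row-stacking (particle) condition is essential --- it fails for the non-particle atom/key fillings, which is why the corresponding intersection theorems there look different.
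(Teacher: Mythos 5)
Your proof is correct, and it shares the paper's overall skeleton --- first show that a coincidence $\yfp_a=\fp_b$ forces $a=b$, then settle the adjacency condition using exactly the same perturbed filling (changing the rightmost entry of a row of length at least $2$ that neighbours an empty row) --- but both supporting steps are carried out by genuinely different means. For $a=b$, the paper simply re-runs the argument of Theorem~\ref{thm:atomyatom}, which applies verbatim because $\LF$ and $\YLF$ fillings satisfy all of the $\ASSF$ and $\YASSF$ conditions; you instead combine the first-column support invariant with value-confinement (for $a_i>0$ the value $i$ can only occupy row $i$, precisely because of the strict row-stacking) and read off $a=b$ from the top degree of each variable, an argument that is special to particles and does not require the longest-row/column-set analysis. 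For the exclusion step, the paper argues directly inside the fillings (the rows above are forced, so the new entry $i+1$ has nowhere to go), whereas you use a dominance sandwich: every monomial of $\yfp_a$ is dominated by $x^a$ while every monomial of $\fp_a$ dominates $x^a$, so the perturbed weight, being strictly dominated by $a$, cannot occur in $\fp_a$. The sandwich also gives you, with no adjacency hypothesis, that any coincidence must collapse to the single monomial $x^a$ --- a fact the paper never needs to make explicit. Your route buys self-containedness (no appeal to the proof of Theorem~\ref{thm:atomyatom}) and a cleaner conceptual reason for the collapse; the paper's buys brevity by reusing machinery already built for Demazure atoms. Two small points to tighten, both of which you essentially gesture at: the phrase ``raising an entry strictly lowers the weight in dominance order'' should be anchored in the observation that every entry of a $\YLF(a)$ filling is bounded below by its row index (so the partial sums of its weight are bounded by those of $a$), and in the sufficiency direction the bottom row of an $\LF$ filling (and dually the top row of a $\YLF$ filling) is forced even without a nonzero neighbour, since its entries are bounded by $1$ (respectively $n$).
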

\begin{proof}
The $\LF$ (respectively, $\YLF$) obey all the conditions on $\ASSF$ (respectively $\YASSF$), hence the same argument used in the proof of Theorem~\ref{thm:atomyatom} shows that if $\yfp_a = \fp_b$ then $a=b$. 

If $a_{i+1}=0$ and $a_i\ge 2$ for some $i$, then let $T\in \YLF(a)$ such that all entries in each row $j$ are $j$. Let also $T'\in \YLF(a)$ be obtained by changing the rightmost $i$ to $i+1$.  Then there is no $S\in \LF(a)$ with the same weight as $T'$, since the entries in $S$ above row $i$ must agree with those in $T'$ above row $i$, and then there is nowhere the new $i+1$ could be placed in $S$. Hence $\yfp_a\neq \fp_a$. A similar argument shows that if $a_{i+1}\ge 2$ and $a_{i}=0$ then $\fp_a \neq \yfp_a$.

Straightforwardly, $\yfp_a = \fp_a = x^a$ if $a$ has no zero part next to a part of size at least $2$.
\end{proof}

\begin{remark}\label{rmk:noembed}
While the Young and reverse analogues of a given basis have similar definitions, they have important structural differences. Unlike the reverse families, for each family of Young polynomials, the basis of Young polynomials of $\Poly_n$ does not embed into $\Poly_{n+1}$. For example, $\yfs_{0101} = x_2x_4+x_3x_4 \in \Poly_4$ is not a Young fundamental slide polynomial in $\Poly_{5}$. 
Because of this, we cannot use the typical definition of a weak composition as an infinite sequence of nonnegative integers (almost all zero); the number of entries in the sequence matters and the  value of $n$ must be specified.
\end{remark}

\begin{remark}\label{rmk:stablelimit}
Stable limits are obtained for all families in the top row of Figure~\ref{fig:poset} by prepending zeros to the weak composition $a$; they are equal to the appropriate quasisymmetric function for $\flatten(a)$ (\cite[Theorem D]{Sea20}). While stable limits for the Young analogues of these families can be defined (by appending zeros to the weak composition $a$), they are not equal to the quasisymmetric functions for $\flatten(a)$, except in the case that all nonzero entries of $a$ are to the left of all zero entries of $a$. A polynomial satisfying this condition is in fact already quasisymmetric, and indeed limits to the expected quasisymmetric function. 
\end{remark}

For example, the stable limit of the Young fundamental slide polynomial $\yfs_{230}$ (which is equal to $F_{23}(x_1,x_2,x_3)$) is the (Young) fundamental quasisymmetric function $F_{23}$. However, the stable limit of $\yfs_{203}$ is not $F_{23}$.

\section{Young Schubert polynomials}{\label{Sec:Schubert}}

Schubert polynomials were first introduced in \cite{LasSch82} to represent Schubert classes in the cohomology of the flag manifold. Schubert polynomials are typically indexed by permutations.  However, every permutation corresponds to a weak composition called a \emph{Lehmer code}, which may also be used to index the Schubert polynomial.  For each $n$ there is a $\mathbb{Z}$-basis for $\Poly_n$ consisting of Schubert polynomials, however unlike the previously-discussed bases of $\Poly_n$, the indexing compositions of the Schubert basis elements are not compositions of length $n$ but of arbitrary length. It is a long-standing open problem to find a positive combinatorial formula for the structure constants of the Schubert basis.  See~\cite{Mac91,Man98} for more details about the geometry, algebra, and combinatorics of Schubert polynomials.

We will take the combinatorial ``pipe dreams" model introduced in \cite{BerBil93} as our definition of Schubert polynomials.  Consider a permutation $ w \in S_n$.  The \emph{Lehmer code} of $w$ is the weak composition $L(w)$ of length $n$ whose $i^{th}$ term equals the number of pairs $(i,j)$ with $i<j$ such that $w_i > w_j$.  For example, if $w=31254$ then $L(w)=(2,0,0,1,0)$.  A \emph{(reduced) pipe dream} is a tiling of the first quadrant of $\mathbb{Z} \times \mathbb{Z}$ with \emph{elbows} and \emph{crosses} so that any two of the resulting strands (called \emph{pipes}) cross at most once. The associated permutation 
can be read from the diagram by following the pipes from the $y$-axis to the $x$-axis.  Let $\PD(w)$ denote the set of pipe dreams for $w$. The five pipe dreams in $\PD(31524)$ are shown in Figure~\ref{fig:pipedreams}. 

\begin{figure}[ht]
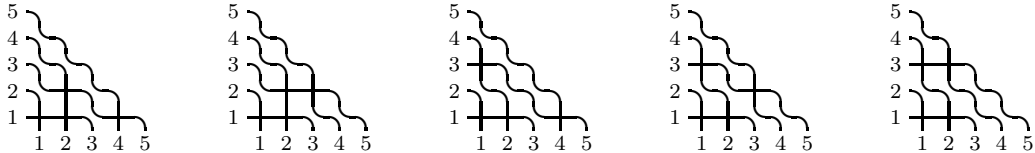


$
\pipes{
               5 & \upelb \\
               4 & \elbow & \upelb \\
               3 & \elbow & \elbow & \upelb \\
               2 & \elbow & \cross & \elbow & \upelb \\
               1 & \cross & \cross & \elbow & \cross & \upelb \\
               &  1 & 2 & 3 & 4 & 5
             } \qquad
 \pipes{
               5 & \upelb \\
               4 & \elbow & \upelb \\
               3 & \elbow & \elbow & \upelb \\
               2 & \elbow & \cross & \cross & \upelb \\
               1 & \cross & \cross & \elbow & \elbow & \upelb \\
               &  1 & 2 & 3 & 4 & 5
             } \qquad
\pipes{
               5 & \upelb \\
               4 & \elbow & \upelb \\
               3 & \cross & \elbow & \upelb \\
               2 & \elbow & \elbow & \elbow & \upelb \\
               1 & \cross & \cross & \elbow & \cross & \upelb \\
               &  1 & 2 & 3 & 4 & 5
             } \qquad
 \pipes{
               5 & \upelb \\
               4 & \elbow & \upelb \\
               3 & \cross & \elbow & \upelb \\
               2 & \elbow & \elbow & \cross & \upelb \\
               1 & \cross & \cross & \elbow & \elbow & \upelb \\
               &  1 & 2 & 3 & 4 & 5
             } \qquad
  \pipes{
               5 & \upelb \\
               4 & \elbow & \upelb \\
               3 & \cross & \cross & \upelb \\
               2 & \elbow & \elbow & \elbow & \upelb \\
               1 & \cross & \cross & \elbow & \elbow & \upelb \\
               &  1 & 2 & 3 & 4 & 5
             }
             $
\caption{The $5$ pipe dreams associated to the permutation $31524$.}{\label{fig:pipedreams}}
\end{figure}

Let $w \in S_n$.  The \emph{Schubert polynomial} $\sch_w = \sch_w(x_1, \hdots  , x_n)$ is given by $$\sch_w = \sum_{P \in \PD(w)} x^{\wt(P)},$$ where $\wt(P)$ is the weak composition whose $i^{th}$ term counts the crosses in the $i^{th}$ row of $P$.

For example, by Figure~\ref{fig:pipedreams} the Schubert polynomial indexed by the permutation $31524$ is $$\sch_{31524} = x_1^3x_2 + x_1^2 x_2^2 + x_1^3 x_3 + x_1^2 x_2 x_3 + x_1^2 x_3^2.$$

Let $\Red(w)$ denote the set of reduced words for a permutation $w$. Every Schubert polynomial can be written as a positive sum of key polynomials according to the following theorem.  

\begin{theorem}[\cite{RS95, LasSch89}]{\label{thm:schubintokeys}}
$$\sch_w = \sum_{\col(T) \in \Red(w^{-1})} \key_{{\rm wt}(K^0_{-}(T))},$$ where the sum is over semistandard Young tableaux $T$, and $K^0_{-}(T)$ is the \emph{left nil key} of $T$, obtained via a modification of Knuth equivalence called \emph{nilplactic} equivalence.

\end{theorem}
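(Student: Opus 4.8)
The plan is to start from the compatible-sequence expansion of $\sch_w$ that is implicit in the pipe dream definition, and then reorganize that sum along nilplactic equivalence classes of reduced words, thereby reducing the theorem to a nilplactic analogue of the key-polynomial compatible-sequence formula in Theorem~\ref{thm:keycompatible}.

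First I would convert the pipe dream generating function into a sum over reduced words and compatible sequences. Reading the crosses of a pipe dream $P\in\PD(w)$ according to a fixed column-by-column order records both a reduced word $\mathbf{a}$ for $w^{-1}$ and, via the row indices of the crosses, a sequence that is exactly $\mathbf{a}$-compatible in the sense of Section~\ref{sec:CompatibleSequences}. This is the Billey--Jockusch--Stanley formula, and carrying it out yields
$$\sch_w \;=\; \sum_{\mathbf{a}\in\Red(w^{-1})}\ \sum_{\mathbf{i}\ \mathbf{a}\text{-compatible}} x^{\comp(\mathbf{i})},$$
placing $\sch_w$ in precisely the same form as $\key_a$ in Theorem~\ref{thm:keycompatible}. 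The appearance of $w^{-1}$ rather than $w$ is forced by the column-reading convention and matches the hypothesis $\col(T)\in\Red(w^{-1})$.

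Next I would partition $\Red(w^{-1})$ into nilplactic (Coxeter--Knuth) classes. The key structural input, due to Lascoux--Sch\"utzenberger and underlying the Edelman--Greene correspondence, is that each nilplactic class contains a unique word that is the column reading word $\col(T)$ of a semistandard Young tableau $T$; thus the tableaux $T$ with $\col(T)\in\Red(w^{-1})$ index the classes, and $\Red(w^{-1})$ is the disjoint union of the nilplactic classes of these $\col(T)$. Splitting the double sum above according to this partition, and using that the left nil key $K^0_{-}(T)$ is an invariant of the whole nilplactic class, reduces the theorem to the single identity
$$\sum_{\mathbf{a}\ \text{nilplactic-equivalent to}\ \col(T)}\ \sum_{\mathbf{i}\ \mathbf{a}\text{-compatible}} x^{\comp(\mathbf{i})} \;=\; \key_{\wt(K^0_{-}(T))}.$$

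I expect this last identity to be the main obstacle, as it is the nilplactic analogue of Theorem~\ref{thm:keycompatible}. Proving it requires nilplactic counterparts of the column-frank/frank-word machinery used in Section~\ref{sec:leftkeys}: one must show that the first subword of a suitable nilplactic-frank word in the class recovers the left nil key column by column, and that the Coxeter--Knuth relations interact with the flag and ascent conditions defining compatible sequences exactly as the ordinary Knuth relations do. The delicate points are that the additional nilplactic relation (which collapses patterns of the form $a(a+1)a$) must be shown to preserve the compatible-sequence bookkeeping, and that the reversal and left-versus-right conventions must be tracked carefully so that it is genuinely the \emph{left} nil key of $T$, and the inverse permutation $w^{-1}$, that appear. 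Once these are established, the argument parallels the plactic proof of Theorem~\ref{thm:keycompatible}, and summing the resulting key polynomials over all tableaux $T$ completes the proof.
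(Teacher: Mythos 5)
First, a point of comparison: the paper does not prove Theorem~\ref{thm:schubintokeys} at all --- it is imported verbatim from \cite{RS95, LasSch89} as a known result, so there is no internal proof to measure your attempt against. Your outline does follow the route taken in that cited literature: the Billey--Jockusch--Stanley expansion of $\sch_w$ over reduced words and compatible sequences, the Lascoux--Sch\"utzenberger/Edelman--Greene fact that the reduced words of $w^{-1}$ partition into nilplactic classes each containing a unique tableau column word, and then a class-by-class identification with a key polynomial.

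However, as a proof your proposal has a genuine gap, and it sits exactly where the theorem's content lives. After the (standard, citable) first two steps, everything reduces to the identity
\[
\sum_{\mathbf{a}\,\text{nilplactic-equivalent to}\,\col(T)}\ \ \sum_{\mathbf{i}\ \mathbf{a}\text{-compatible}} x^{\comp(\mathbf{i})} \;=\; \key_{\wt(K^0_{-}(T))},
\]
and you do not prove it --- you state that you ``expect'' it and describe the machinery a proof would need. This identity is not a routine transfer of Theorem~\ref{thm:keycompatible}: it \emph{is} the Lascoux--Sch\"utzenberger theorem, restated one nilplactic class at a time. Two specific obstructions prevent the plactic argument from carrying over by analogy. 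First, the plactic formula in Theorem~\ref{thm:keycompatible} sums over words $b$ with $\rev(b)\sim\col(\keytab(a))$ and produces the \emph{right} key $\keytab(a)$, whereas here the words themselves (not their reversals) range over a nilplactic class and the \emph{left} nil key appears; reconciling these conventions is part of the work, not an afterthought. Second, the Coxeter--Knuth braid relation $a(a+1)a \leftrightarrow (a+1)a(a+1)$, unlike the Knuth relations, changes the multiset of letters of the word, so it changes the flag condition $w_k\le b_k$ governing which sequences are compatible; showing that the generating function of compatible sequences is nevertheless constant in the right sense across a nilplactic class requires a genuine argument (this is the heart of \cite{LasSch89} and of the frank-word analysis in \cite{RS95}). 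So what you have is a correct and well-organized reduction of the theorem to its hard kernel, together with an accurate diagnosis of where the difficulty lies --- but not a proof.
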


\subsection{Young pipe dreams}
Towards giving a combinatorial construction of the Young analogue of Schubert polynomials, we define a Young analogue of pipe dreams. Relabel the row indices (on the $y$-axis) with $n$ as the bottom row, $n-1$ as the second row, and so on. 
Then read the ``reversal" of the permutation by following the pipes from the $y$-axis to the $x$-axis.  This reversal is the permutation $w$ read from right to left (in one-line notation), which we denote $\rev(w)$. This new diagram is called the \emph{Young pipe dream} corresponding to the permutation obtained by reading the pipes in this manner, and the set of all Young pipe dreams for a permutation $w$ is denoted $\YPD(w)$. 
Let the \emph{Young Lehmer code} of a permutation $w\in S_n$, denoted $\YL(w)$, be the weak composition of length $n$ whose $i^{th}$ term is the number of pairs $(i,j)$ with $i>j$ such that $w_i>w_j$.  The \emph{Young weight} $\ywt(P)$ of a Young pipe dream $P$ is the weak composition whose $i^{th}$ part is the number of crosses in the $i^{th}$ row from the top. 

\begin{figure}[ht]
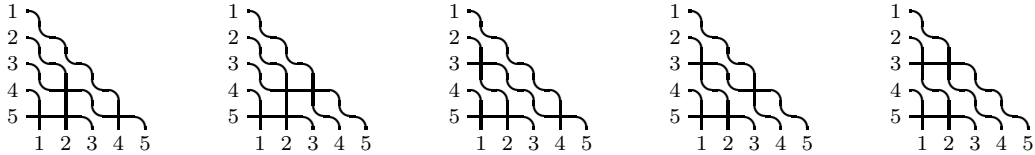


$
\pipes{
               1 & \upelb \\
               2 & \elbow & \upelb \\
               3 & \elbow & \elbow & \upelb \\
               4 & \elbow & \cross & \elbow & \upelb \\
               5 & \cross & \cross & \elbow & \cross & \upelb \\
               &  1 & 2 & 3 & 4 & 5
             } \qquad
 \pipes{
               1 & \upelb \\
               2 & \elbow & \upelb \\
               3 & \elbow & \elbow & \upelb \\
               4 & \elbow & \cross & \cross & \upelb \\
               5 & \cross & \cross & \elbow & \elbow & \upelb \\
               &  1 & 2 & 3 & 4 & 5
             } \qquad
\pipes{
               1 & \upelb \\
               2 & \elbow & \upelb \\
               3 & \cross & \elbow & \upelb \\
               4 & \elbow & \elbow & \elbow & \upelb \\
               5 & \cross & \cross & \elbow & \cross & \upelb \\
               &  1 & 2 & 3 & 4 & 5
             } \qquad
 \pipes{
               1 & \upelb \\
               2 & \elbow & \upelb \\
               3 & \cross & \elbow & \upelb \\
               4 & \elbow & \elbow & \cross & \upelb \\
               5 & \cross & \cross & \elbow & \elbow & \upelb \\
               &  1 & 2 & 3 & 4 & 5
             } \qquad
  \pipes{
               1 & \upelb \\
               2 & \elbow & \upelb \\
               3 & \cross & \cross & \upelb \\
               4 & \elbow & \elbow & \elbow & \upelb \\
               5 & \cross & \cross & \elbow & \elbow & \upelb \\
               &  1 & 2 & 3 & 4 & 5
             }
             $
\caption{The $5$ elements of $\YPD(42513)$.}{\label{fig:youngpipedreams}}
\end{figure}

Let $w\in S_n$. Then the \emph{Young Schubert polynomial} $\ysch_w = \ysch_w(x_1, \hdots  , x_n)$ is given by 
$$\ysch_w = \sum_{P \in \YPD(w)} x^{\ywt(P)}.$$ 

For example, the Young Schubert polynomial associated to the permutation $42513$ can be calculated by reading the Young weights of the Young pipe dreams in Figure~\ref{fig:youngpipedreams} as follows: 
$$\ysch_{42513} = x_4 x_5^3 + x_4^2x_5^2 + x_3 x_5^3 + x_3 x_4 x_5^2 + x_3^2 x_5^2.$$

It is straightforward to check that $\YL(\rev(w)) = \rev(L(w))$.  It follows that
\begin{equation}\label{eqn:SchubertYoungSchubert}
\ysch_w(x_1, \ldots , x_n)=\sch_{\rev(w)}(x_n, \ldots , x_1).
\end{equation}

\begin{remark}
Schubert polynomials have a well-known stability property, namely, for $w\in S_n$, $\sch_w = \sch_{i_n(w)}$, where $i_n:S_n\rightarrow S_{n+1}$ is the embedding in which $S_n$ acts on the first $n$ letters.  The same is not true for Young Schubert polynomials, for example $\ysch_{132} = x_2x_3$ but $\ysch_{1324} = x_2x_3x_4^3$. Analogous to Remark~\ref{rmk:noembed}, a Young Schubert polynomial in $\Poly_n$ is not a Young Schubert polynomial in $\Poly_{n+1}$. Similarly, the stable limit of a Schubert polynomial (on prepending zeros to the Lehmer code) exists, and was shown in \cite{Mac91} to be a \emph{Stanley symmetric function}. Analogous to Remark~\ref{rmk:stablelimit}, there is no corresponding stable limit for Young Schubert polynomials.
\end{remark}

\begin{remark}
The analogue of Remark~\ref{rmk:Youngbases} fails in this case: despite the fact that Schubert polynomials form a basis for $\Poly_n$, no collection of Young Schubert polynomials forms a basis for $\Poly_n$. This is due to the fact that the exponent of $x_i$ in a monomial in a Young Schubert polynomial is bounded by $i-1$. For Schubert polynomials this ``staircase'' condition goes the opposite way: the exponent of $x_i$ is bounded by $m-i$ when the indexing permutation is in $S_m$. Hence, by increasing $m$ as needed, one can find a Schubert polynomial in $\Poly_n$ containing any given monomial in $\Poly_n$.
\end{remark}

\begin{remark}
For completeness, we note that no polynomial is both a Schubert and a Young Schubert polynomial. All Schubert polynomials have at least one monomial divisible by $x_1$, but no Young Schubert polynomials do.
\end{remark}

A permutation $w$ is said to be \emph{vexillary} if for every sequence $a<b<c<d$ of indices, one never has $w_b < w_a<w_d<w_c$.  That is, $w$ is \emph{vexillary} if and only if $w$ avoids the pattern $2143$.  For $w$ vexillary, we have~\cite{LasSch90} $$\sch_w = \key_{L(w)}.$$  Thus the Young Schubert polynomials indexed by permutations whose reversal is vexillary are the Young key polynomials indexed by Young Lehmer codes of $3412$-avoiding permutations.

Theorem~\ref{thm:schubintokeys} and (\ref{eqn:keyyoungkey}) yield the following formula for writing any Young Schubert polynomial as a positive sum of Young key polynomials.
$$\ysch_w = \sum_{\col(T) \in {\rm \Red}((\rev(w))^{-1})} \ykey_{\rev({\rm wt}(K^0_{-}(T)))}.$$

Other combinatorial descriptions of Schubert polynomials can similarly be translated into descriptions of Young Schubert polynomials.

Schubert polynomials were initially defined in terms of divided difference operators so that $$\sch_w(x_1, x_2, \hdots , x_n) = \partial_{w^{-1} w_0} (x_1^{n-1} x_2^{n-2} \cdots x_{n-1}),$$ where $w_0=n \; n-1 \; \cdots 2 \; 1$ is the longest permutation of an $n$-element set and $\partial_i (f) = \frac{f-s_i(f)}{x_i - x_{i+1}}$.  There is a natural way to describe Young Schubert polynomials in terms of divided difference operators, which we establish below. 
For $w\in S_n$, let $\frev(w)$ be the permutation $w_0ww_0$. It is straightforward to see that in one-line notation, $\frev(w)$ is obtained from $w$ by reversing the entries of $w$ and replacing each entry $i$ with $n+1-i$, e.g. $\frev(31542) = 42153$. 


\begin{lemma}\label{lem:reducedwordfrev}
Let $s_{i_1}\cdots s_{i_r}$ be a reduced word for $w\in S_n$. Then $\frev(w) = s_{n-i_1}\cdots s_{n-i_r}$.
\end{lemma}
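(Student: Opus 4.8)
The plan is to reduce the statement to a single conjugation identity and then telescope. First I would recall that by definition $\frev(w) = w_0 w w_0$, and that $w_0$ is an involution, so $w_0 = w_0^{-1}$ and $w_0 w_0$ is the identity permutation. This is the only structural fact about $w_0$ that the argument needs.

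The key step is to verify the identity $w_0 s_i w_0 = s_{n-i}$ for each $1 \le i < n$. Conjugating a transposition sends $(i,i+1)$ to $(w_0(i), w_0(i+1))$, and since $w_0$ sends $k$ to $n+1-k$, the image is the transposition interchanging $n+1-i$ and $n-i$, which is exactly $s_{n-i}$. I would emphasize that this is a purely group-theoretic identity and is therefore insensitive to the left/right action convention fixed at the start of the paper.

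Given this, I would write $\frev(w) = w_0 s_{i_1}\cdots s_{i_r} w_0$ and insert a copy of $w_0 w_0 = \mathrm{id}$ between each pair of adjacent generators, so that
\[
\frev(w) = (w_0 s_{i_1} w_0)(w_0 s_{i_2} w_0)\cdots (w_0 s_{i_r} w_0).
\]
Applying the conjugation identity to each of the $r$ factors then yields $s_{n-i_1} s_{n-i_2}\cdots s_{n-i_r}$, which is the claimed expression.

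Finally, although the statement only asserts an equality, I would note for later use that the resulting word is automatically reduced: the map $w \mapsto w_0 w w_0$ is an automorphism of $S_n$ preserving Coxeter length (it is the diagram automorphism induced by $i \mapsto n-i$), so $\ell(\frev(w)) = \ell(w) = r$. There is no genuine obstacle in this proof; the only points requiring care are tracking the involution property of $w_0$ and confirming that conjugation produces precisely the index shift $i \mapsto n-i$, both of which are direct computations.
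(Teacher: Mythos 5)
Your proof is correct, but it takes a genuinely different route from the paper. The paper proceeds by induction on the Coxeter length of $w$, working entirely in one-line notation: it takes an ascent $j$ of $w$, observes that passing from $w$ to $ws_j$ exchanges the entries in positions $j$ and $j+1$, and checks via the combinatorial description of $\frev$ (reverse the word, complement each entry) that this corresponds exactly to exchanging the entries of $\frev(w)$ in positions $n-j$ and $n-j+1$, i.e.\ to right-multiplication by $s_{n-j}$. Your argument instead uses the algebraic definition $\frev(w)=w_0ww_0$ directly: you verify the single conjugation identity $w_0 s_i w_0 = s_{n-i}$ (your computation $(w_0(i),w_0(i+1))=(n+1-i,n-i)=s_{n-i}$ is right, and you are also right that this is a group identity independent of the paper's right-action convention) and then telescope by inserting $w_0w_0=\mathrm{id}$ between consecutive generators. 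Your approach buys several things: it avoids induction entirely, it makes transparent that the identity holds for \emph{any} word for $w$, reduced or not, and it yields as an immediate corollary that conjugation by $w_0$ is the length-preserving diagram automorphism, so the output word is again reduced --- a fact the paper needs later (e.g.\ when applying divided difference operators $\partial_{n-i_1}\cdots\partial_{n-i_r}$ along the transformed word, where non-reducedness would give $0$) but does not isolate in the lemma. What the paper's induction buys is consistency with its combinatorial, one-line-notation framing of $\frev$, at the cost of a slightly longer and less reusable argument.
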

\begin{proof}
We induct on the length of $w$. If $w$ has length $0$, then $w = \frev(w) = id$ and the statement holds. Now suppose the statement holds for all $w$ of length $r$, for some $r\ge 0$. Suppose $w$ has an ascent in position $j$, i.e. $w(j)<w(j+1)$. Then $w s_j$ has length $r+1$, and is obtained by exchanging the $j$th and $(j+1)$th entries of $w$. We have $w s_j = s_{i_1}\cdots s_{i_r} s_j$; we need to show $\frev(w s_j) = s_{n-i_1}\cdots s_{n-i_r}  s_{n-j}$. But $s_{n-i_1}\cdots s_{n-i_r}$ is equal to $\frev(w)$ by the inductive hypothesis, and therefore $s_{n-i_1}\cdots s_{n-i_r}  s_{n-j}$ is obtained from $\frev(w)$ by exchanging the entries in the $(n-j)$th and $(n-j+1)$th positions. This permutation is exactly $\frev(w s_j)$.
\end{proof}

\begin{lemma}\label{lem:Ipartial}
Let $f$ be a polynomial in $x_1, \ldots , x_n$, and let $I(f)$ be defined as in Lemma~\ref{lem:reversediff}. Then $I ( \partial_{i_1} \cdots \partial_{i_r} (f)) = (-1)^r \partial_{n-i_1} \cdots \partial_{n-i_r} (I(f))$.
\end{lemma}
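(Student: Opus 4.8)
The plan is to reduce to the single-operator case and then induct on $r$. The heart of the argument is the identity
\[ I(\partial_i f) = -\partial_{n-i}(I(f)), \]
valid for every $f$ and every $1 \le i < n$; this is the $r=1$ instance, and it parallels Lemma~\ref{lem:reversediff} (where $\pi_i$ produced no sign, whereas here the antisymmetry of $\partial_i$ introduces the factor $-1$).

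To establish the base case, I would expand $\partial_i f = (f - s_i f)/(x_i - x_{i+1})$ and apply $I$ termwise. Since $I$ exchanges $x_i \leftrightarrow x_{n+1-i}$ and $x_{i+1} \leftrightarrow x_{n-i}$, the denominator becomes $x_{n+1-i} - x_{n-i}$. The one nonformal point is the numerator: I must show $I(s_i f) = s_{n-i}(I(f))$, i.e.\ that $I$ intertwines $s_i$ and $s_{n-i}$. This follows by viewing both $I$ and $s_i$ as the substitution operators attached to the permutations $w_0\colon j \mapsto n+1-j$ and the transposition $(i,\,i+1)$ respectively; such substitutions compose according to the group law, and the conjugation identity $w_0 s_i w_0 = s_{n-i}$ (equivalently $w_0 s_i = s_{n-i} w_0$, as $w_0$ is an involution) gives exactly $I \circ s_i = s_{n-i} \circ I$. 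Combining these,
\[ I(\partial_i f) = \frac{I(f) - s_{n-i}(I(f))}{x_{n+1-i} - x_{n-i}} = -\,\frac{I(f) - s_{n-i}(I(f))}{x_{n-i} - x_{n+1-i}} = -\partial_{n-i}(I(f)), \]
since the middle denominator is the negative of the denominator appearing in $\partial_{n-i}$.

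For the inductive step I would set $g = \partial_{i_2}\cdots\partial_{i_r} f$ and apply the base case to obtain $I(\partial_{i_1} g) = -\partial_{n-i_1}(I(g))$. Because $I$ is an involution, the term $I(g) = I(\partial_{i_2}\cdots\partial_{i_r} f)$ is governed by the inductive hypothesis and equals $(-1)^{r-1}\partial_{n-i_2}\cdots\partial_{n-i_r}(I(f))$. Substituting and collecting the two signs yields the claimed factor $(-1)^r$.

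The main obstacle, such as it is, is purely bookkeeping: correctly identifying the intertwining relation $I s_i = s_{n-i} I$ and confirming that the reversal of the denominator is the sole source of the sign. One could instead mimic Lemma~\ref{lem:reversediff} and verify the base case by a direct computation on a monomial $f = x^b$, but the conjugation argument is cleaner and makes the origin of the sign transparent.
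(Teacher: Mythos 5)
Your proposal is correct, and its skeleton matches the paper's: both reduce to the single-operator identity $I(\partial_i f) = -\partial_{n-i}(I(f))$ and then iterate (your induction on $r$ just formalizes the paper's ``repeated iteration''). Where you genuinely differ is in how that base case is established. The paper verifies it by explicit computation on monomials $x_i^a x_{i+1}^b$, splitting into cases $a>b$, $a<b$, $a=b$, and implicitly relying on linearity together with the fact that variables other than $x_i, x_{i+1}$ factor out of $\partial_i$. You instead view $I$ as the substitution operator attached to $w_0\colon j \mapsto n+1-j$, derive the intertwining relation $I \circ s_i = s_{n-i} \circ I$ from the conjugation identity $w_0 s_i w_0 = s_{n-i}$, and apply $I$ (a ring automorphism, extended to rational functions) directly to the defining formula for $\partial_i$, so that the sign emerges solely from the reversed denominator $x_{n+1-i} - x_{n-i} = -(x_{n-i} - x_{n-i+1})$. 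Your route buys a cleaner argument: it avoids the case analysis and the unstated reduction to two-variable monomials, works for arbitrary $f$ at once, and makes the source of the $(-1)^r$ completely transparent; the paper's computation, in exchange, is more self-contained, using nothing beyond the definition of $\partial_i$ and $I$. One tiny remark: in your inductive step the appeal to ``$I$ is an involution'' is unnecessary, since the inductive hypothesis applies verbatim to $I(g) = I(\partial_{i_2}\cdots\partial_{i_r} f)$ --- but this costs nothing.
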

\begin{proof}
We show that $I(\partial_i (f)) = -\partial_{n-i} (I(f))$; after which repeated iteration establishes the result. To see this, consider the monomial $x_i^a x_{i+1}^b$ where $a>b$.  (The case where $a<b$ is similar and if $a=b$ then $\partial_i (x_i^a x_{i+1}^b) = 0$.) 
\begin{align*}
I(\partial_i (x_i^a x_{i+1}^b)) = I \left(\frac{x_i^{a}  x_{i+1}^{b} - x_i^b x_{i+1}^a}{x_i-x_{i+1}}\right) & =   \frac{x_{n+1-i}^a x_{n-i}^b - x_{n+1-i}^b x_{n-i}^a}{x_{n+1-i}-x_{n-i}} \\
    & = - \partial_{n-i}( x_{n-i}^{b} x_{n+1-i}^a) 
     = - \partial_{n-i} (I(x_i^a x_{i+1}^b)). \qedhere
\end{align*}
\end{proof}
We are now ready to establish a divided difference formula for $\ysch_w$. The power of $-1$ appearing in the formula below is due solely to the fact that since we begin with $x_2 x_3^2 \cdots x_n^{n-1}$, we apply $\partial_i$ to a polynomial whose power of $x_i$ is smaller than its power of $x_{i+1}$ in each monomial. The power of $-1$ could be defined away by replacing the denominator with $x_{i+1} - x_i$ in the definition of $\partial_i$. 
\begin{theorem}
Let $w\in S_n$. Then $\ysch_{w} (x_1, x_2, \hdots , x_n)= (-1)^{\ell(w)}\partial_{w^{-1}} (x_2 x_3^2 \cdots x_n^{n-1} ).$
\end{theorem}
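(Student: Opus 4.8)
The plan is to reduce the statement to the already-established divided difference formula for ordinary Schubert polynomials via the flip-and-reverse relation (\ref{eqn:SchubertYoungSchubert}), and then transport the divided difference operators across the variable-reversing involution $I$ of Lemma~\ref{lem:reversediff} using Lemmas~\ref{lem:reducedwordfrev} and~\ref{lem:Ipartial}. First I would rewrite (\ref{eqn:SchubertYoungSchubert}) as $\ysch_w = I(\sch_{\rev(w)})$ and apply the defining formula $\sch_{\rev(w)} = \partial_{(\rev(w))^{-1} w_0}(x^\delta)$, where $x^\delta = x_1^{n-1}x_2^{n-2}\cdots x_{n-1}$. The key permutation identity to verify is $(\rev(w))^{-1} w_0 = \frev(w^{-1})$: reading a one-line notation backwards corresponds to right multiplication by $w_0$, so $\rev(w) = w w_0$, whence $(\rev(w))^{-1} w_0 = w_0 w^{-1} w_0 = \frev(w^{-1})$. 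This yields $\ysch_w = I(\partial_{\frev(w^{-1})}(x^\delta))$.

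Next I would fix a reduced word $s_{j_1}\cdots s_{j_m}$ for $\frev(w^{-1})$ and apply Lemma~\ref{lem:Ipartial} to obtain $I(\partial_{j_1}\cdots \partial_{j_m}(x^\delta)) = (-1)^m \partial_{n-j_1}\cdots \partial_{n-j_m}(I(x^\delta))$. By Lemma~\ref{lem:reducedwordfrev}, $s_{n-j_1}\cdots s_{n-j_m}$ is a reduced word for $\frev(\frev(w^{-1})) = w^{-1}$ (as $\frev$ is an involution), so $\partial_{n-j_1}\cdots \partial_{n-j_m} = \partial_{w^{-1}}$. A direct check gives $I(x^\delta) = x_2 x_3^2\cdots x_n^{n-1}$, exactly the starting monomial in the target formula. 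Finally, since conjugation by $w_0$ preserves Coxeter length and $\ell(w^{-1}) = \ell(w)$, we have $m = \ell(\frev(w^{-1})) = \ell(w)$, and combining these facts produces $\ysch_w = (-1)^{\ell(w)}\partial_{w^{-1}}(x_2 x_3^2\cdots x_n^{n-1})$, as required.

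The individual computations are short, so the main obstacle is keeping the permutation bookkeeping consistent rather than any analytic difficulty. In particular I expect the delicate points to be correctly identifying $\rev(w)$ with $w w_0$ (rather than $w_0 w$), confirming that both $\frev$ and inversion preserve length so that the exponent of $-1$ comes out as exactly $\ell(w)$, and ensuring that the index map $i \mapsto n-i$ sends a reduced word for $\frev(w^{-1})$ to a genuine reduced word for $w^{-1}$ so that $\partial_{w^{-1}}$ is well-defined independent of the chosen word.
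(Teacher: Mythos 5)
Your proof is correct and follows essentially the same route as the paper's: both rewrite $\ysch_w = I(\sch_{\rev(w)})$, reduce the index via $(\rev(w))^{-1}w_0 = w_0 w^{-1} w_0 = \frev(w^{-1})$, and then transport the divided differences across $I$ using Lemmas~\ref{lem:reducedwordfrev} and~\ref{lem:Ipartial}, with $I(x_1^{n-1}\cdots x_{n-1}) = x_2x_3^2\cdots x_n^{n-1}$ and length-preservation giving the sign $(-1)^{\ell(w)}$. The only cosmetic difference is that you fix a reduced word for $\frev(w^{-1})$ and map indices back to $w^{-1}$, whereas the paper starts from a reduced word for $w^{-1}$; since $\frev$ and $i\mapsto n-i$ are involutions, these are the same argument.
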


\begin{proof}
Let $s_{i_1}\cdots s_{i_r}$ be a reduced word for $w^{-1}$. Combining Lemmas~\ref{lem:reducedwordfrev} and \ref{lem:Ipartial}, we have
\[I (\partial_{\frev(w^{-1})} (\sch_{w_0})) = I ( \partial_{n-i_1} \cdots \partial_{n-i_r} (\sch_{w_0})) = (-1)^r \partial_{i_1} \cdots \partial_{i_r} (I(\sch_{w_0})) = (-1)^r\partial_{w^{-1}}(I(\sch_{w_0})).\]
Recall that $\ysch_w=I(\sch_{\rev(w)})$, and in particular $\ysch_{id}=I(\sch_{w_0}) = I(x_1^{n-1}x_2^{n-2}\cdots x_{n-1})$. Note also that $w_0^{-1} = w_0$, that $\ell(w) = \ell(w^{-1})$, and that $ww_0 = \rev(w)$. We therefore have

\begin{align*}
\ysch_w = I(\sch_{\rev(w)}) & = I(\partial_{(\rev(w))^{-1}w_0}(\sch_{w_0})) \\
                                           & = I(\partial_{(ww_0)^{-1}w_0}(\sch_{w_0})) \\
                                           & = I(\partial_{w_0w^{-1}w_0}(\sch_{w_0})) \\
                                           & = I(\partial_{\frev(w^{-1})}(\sch_{w_0})) \\
                                           & = (-1)^{\ell(w)}\partial_{w^{-1}}(I(\sch_{w_0})) \\
                                           & = (-1)^{\ell(w)}\partial_{w^{-1}}(\ysch_{id}) 
                                            = (-1)^{\ell(w)}\partial_{w^{-1}}(x_2x_3^2\cdots x_n^{n-1}).  \qedhere
                                          \end{align*} \end{proof}

\begin{ex}
Let $w=2314 = s_1s_2$. Then $w^{-1} = 3124 = s_2s_1$ and we have

\begin{align*}
\ysch_{2314}  = (-1)^{\ell(2314)}\partial_{(2314)^{-1}} (x_2x_3^2x_4^3) 
                      & = (-1)^2\partial_{(3124)} (x_2x_3^2x_4^3) \\
                     & = \partial_2\partial_1 (x_2x_3^2x_4^3) \\
                     & = \partial_2 \left(\frac{x_2x_3^2x_4^3 - x_1x_3^2x_4^3}{x_1-x_2}\right) \\
                     & = \partial_2(-x_3^2x_4^3) \\
                     & = - \frac{x_3^2x_4^3 - x_2^2x_4^3}{x_2-x_3} 
                      = -(-(x_3+x_2)x_4^3) 
                      = x_3x_4^3 + x_2x_4^3 .
\end{align*}
Compare this to $\sch_{\rev(2314)} = \sch_{4132}$, which is equal to $x_1^3x_2 + x_1^3x_3$.
\end{ex}

\subsection{Demazure crystal structure}

We use the recently developed crystal structure for Stanley symmetric functions~\cite{MorSch16} and the Demazure crystal structure for Schubert polynomials~\cite{AssSch18}  to generate the Demazure crystal structure for Young Schubert polynomials. 

Let $w\in S_n$. Following \cite{MorSch16}, a \emph{reduced factorization} for $w$ is a partition of a reduced word for $w$ into blocks (possibly empty) of consecutive entries such that entries decrease from left to right within each block; let $\RF^\ell(w)$ denote the set of all reduced factorisations of $w$ with $\ell$ blocks. In \cite{MorSch16}, a crystal structure is defined on $\RF^\ell(w)$. Precise definitions of the $e_i$ and $f_i$ operators may be found in \cite[Section 3.2]{MorSch16}. See Figure~\ref{fig:YoungDemazureCrystal} for the crystal structure on $\RF^3(21534)$, with arrows $f_i$ labelled. For our purposes, we need to define the weight $\wt(r)$ of $r\in \RF^{\ell}(w)$ to be the weak composition of length $n$ given by $(0, \ldots , 0, |r^\ell|, |r^{\ell-1}| \ldots |r^1|)$ (as opposed to $(|r^\ell|, |r^{\ell-1}| \ldots |r^1|)$ used in \cite{MorSch16}). In particular we define $\wt(r)$ to begin with $n-\ell$ zeros, e.g., for $(41)()(3)\in \RF^3(21534)$, we have $n=5$, $\ell=3$ and $\wt((41)()(3)) = 00102$. 

Let $\ell$ be the position of the rightmost descent in $w$. Define the \emph{reduced factorisations with Young cutoff} for $w$, denoted $\RFYC(w)$, to be those elements of $\RF^\ell(w)$ such that the smallest entry in the $i^{th}$ block from the left is at least $i$.  See Figure~\ref{fig:YoungDemazureCrystal}, in which the elements of $\RFYC(21534)$ are bolded. Compare this to the \emph{reduced factorisations with cutoff} defined in \cite{AssSch18}.

\begin{theorem}
The Young Schubert polynomial $\ysch_w$ is equal to $\sum_{r\in \RFYC(\rev(w))}x^{\wt(r)}$. Moreover, $\RFYC(w)$ is a union of Demazure crystals, under the convention that we begin with the lowest weight rather than the highest and use the $f_i$ operators.
\end{theorem}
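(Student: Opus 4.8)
The plan is to obtain both assertions by ``undoing'' the flip-and-reverse that \cite{AssSch18} applies to the \cite{MorSch16} crystal, in direct parallel with how the Young key polynomial crystal was obtained from the key polynomial crystal. By (\ref{eqn:SchubertYoungSchubert}) we have $\ysch_w = I(\sch_{\rev(w)})$, where $I$ is the variable-reversal operator of Lemma~\ref{lem:Ipartial}, so it suffices to transport the crystal description of $\sch_{\rev(w)}$ through $I$. For that I would invoke \cite{AssSch18}, which realizes $\sch_{\rev(w)}$ as the character of a union of Demazure crystals inside a flipped-and-reversed version of the \cite{MorSch16} crystal on reduced factorizations of $\rev(w)$, the Demazure crystals being indexed by reduced factorizations with cutoff.

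Applying $I$ reverses each monomial weight, which on the crystal side undoes the flip-and-reverse of \cite{AssSch18}: it returns us to the genuine \cite{MorSch16} crystal on $\RF^\ell(\rev(w))$, where $\ell$ is the position of the rightmost descent, and it converts the Demazure crystals of \cite{AssSch18} into Demazure crystals truncated from the lowest weight using the $f_i$ operators. Under this passage the upper-bound cutoff of \cite{AssSch18} (a bound of the form ``largest entry in the relevant block is at most $n-i$'') is carried by the letter-flip $j\mapsto n-j$ to the lower-bound condition ``smallest entry in the $i$th block from the left is at least $i$'' that defines $\RFYC(\rev(w))$, while the reversal of block order combined with the weight-reversal built into $I$ turns the \cite{MorSch16} weight into the Young weight $\wt$, the $n-\ell$ leading zeros of $\wt$ arising as the reversal of trailing zeros. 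Summing characters then gives $\ysch_w=\sum_{r\in\RFYC(\rev(w))}x^{\wt(r)}$, and since $I$ carries a union of Demazure crystals to a union of Demazure crystals in the lowest-weight/$f_i$ convention, $\RFYC(\rev(w))$ is such a union, proving the second assertion.

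I expect the main obstacle to be the bookkeeping that makes this transport exact rather than merely up to reindexing: confirming that the cutoff condition of \cite{AssSch18} flips \emph{precisely} to ``smallest entry in block $i$ is at least $i$'' with no off-by-one discrepancy, that the number of blocks $\ell$ remains the position of the rightmost descent after applying the two distinct reversal operations $\rev$ and $\frev$ (cf. Lemma~\ref{lem:reducedwordfrev} and the identity $\ysch_w=I(\sch_{\rev(w)})$), and that the prepended zeros and the choice of weight normalization line up so that the transported crystal genuinely sits on $\RF^\ell(\rev(w))$ with weight $\wt$. Once these conventions are reconciled, both statements follow formally, because undoing the \cite{AssSch18} flip-and-reverse automatically sends their Demazure structure to the claimed lowest-weight Demazure structure.
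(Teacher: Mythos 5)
Your overall strategy is the one the paper itself uses: realize $\ysch_w$ via (\ref{eqn:SchubertYoungSchubert}), transport the Demazure crystal structure of \cite{AssSch18} through a weight-reversing isomorphism with the \cite{MorSch16} crystal on reduced factorizations, and observe that exchanging $f_i$ with $e_{n-i}$ converts Demazure truncations from the highest weight into truncations from the lowest weight. However, the mechanism you describe for the set-level correspondence is wrong, and the ``bookkeeping'' you defer would fail if carried out as stated. The isomorphism between the two crystals involves no letter-flip $j\mapsto n-j$ at all: it is obtained by \emph{reversing} each reduced factorization, which turns a reduced word for $w$ partitioned into decreasing blocks into a reduced word for $w^{-1}$ partitioned into increasing blocks, with the letters themselves unchanged; the ``flip'' in \cite{AssSch18} lives entirely in the crystal operator indices ($f_i\leftrightarrow e_{n-i}$), and that is precisely what produces the lowest-weight convention. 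Correspondingly, the cutoff of \cite{AssSch18} is not an upper-bound condition of the form ``largest entry at most $n-i$'' but a lower-bound condition of exactly the same shape as the Young cutoff --- smallest entry of the $i$th block at least $i$ --- with blocks counted from the opposite end. Under reversal the $i$th block from the left becomes the $i$th block from the right, so $\RFYC(w)$ maps bijectively onto the reduced factorizations with cutoff for $w^{-1}$, whose weight generating function is $\sch_w$; since the bijection is weight-reversing, (\ref{eqn:SchubertYoungSchubert}) then yields that the generating function of $\RFYC(\rev(w))$ is $\ysch_w$.

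The distinction is not cosmetic: group-theoretically, letter complementation is the wrong operation. By Lemma~\ref{lem:reducedwordfrev}, replacing every letter $j$ by $n-j$ in a reduced word for $v$ produces a reduced word for $w_0vw_0=\frev(v)$, not for $v^{-1}$, so a bijection built on the letter-flip would carry $\RFYC(\rev(w))$ to factorizations of the wrong permutation, and no adjustment of weight conventions would then produce the character $\sch_{\rev(w)}(x_n,\ldots,x_1)$. Once you replace the letter-flip by word reversal (keeping the operator-index exchange, which is what converts the highest-weight Demazure structure of \cite[Theorem 5.11]{AssSch18} into the claimed lowest-weight one), your argument coincides with the paper's proof.
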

\begin{proof}
In \cite{AssSch18}, a crystal structure isomorphic to that of \cite{MorSch16} is obtained by reversing each reduced factorisation for $w$ (thus obtaining reduced factorisations for $w^{-1}$ partitioned into increasing blocks), and exchanging the roles of $f_i$ with $e_{n-i}$ and $e_i$ with $f_{n-i}$. Restricting this isomorphism to $\RFYC(w)$ gives the set of reduced factorisations with cutoff for $w^{-1}$, of which the weight generating function is $\sch_w$ \cite{AssSch18}. Since this isomorphism is weight-reversing, it follows from (\ref{eqn:SchubertYoungSchubert}) that the weight generating function of $\RFYC(w)$ is $\ysch_{\rev(w)}$. By \cite[Theorem 5.11]{AssSch18}, reduced factorisations with cutoff have a Demazure crystal structure, and the isomorphism implies $\RFYC(w)$ is a union of Demazure truncations of the components of $\RF(w)$, starting with the lowest weight.
\end{proof}

\begin{figure}[ht]
\begin{center}
\begin{tikzpicture}[xscale=1.5,yscale=1.35]
 \node at (0,6) (L6) {$()()(431)$};
  \node at (1,5) (L5) {$()(4)(31)$};
  \node at (0,4) (L4a) {$(4)()(31)$};
  \node at (2,4) (L4b) {$()(43)(1)$};
  \node at (1,3) (L3a) {$(4)(3)(1)$};
  \node at (3,3) (L3b) {$()(431)()$};
  \node at (0,2) (L2a) {$(43)()(1)$};
  \node at (2,2) (L2b) {$(4)(31)()$};
  \node at (1,1) (L1) {$(43)(1)()$};
  \node at (0,0) (L0) {${\bf (431)()()}$};
  
  \node at (7,4) (R4) {$()(1)(43)$};
  \node at (5,3) (R3a) {${\bf (1)()(43)}$};
  \node at (9,3) (R3b) {$()()(41)(3)$};
  \node at (6,2) (R2a) {${\bf (1)(4)(3)}$};
  \node at (8,2) (R2b) {$(4)(1)(3)$};
  \node at (5,1) (R1a) {${\bf (41)()(3)}$};
  \node at (9,1) (R1b) {${\bf (1)(43)()}$};
  \node at (7,0) (R0) {${\bf (41)(3)()}$};

  \draw[thick,->,blue  ] (L6) -- (L5) node[midway,above] {$1$};
  \draw[thick,->,blue  ] (L5) -- (L4b) node[midway,above] {$1$};
  \draw[thick,->,blue  ] (L4b) -- (L3b) node[midway,above] {$1$};
  \draw[thick,->,blue  ] (L4a) -- (L3a) node[midway,above] {$1$};
  \draw[thick,->,blue  ] (L3a) -- (L2b) node[midway,above] {$1$};
  \draw[thick,->,blue  ] (L2a) -- (L1) node[midway,above] {$1$};
  
  \draw[thick,->,blue  ] (R4) -- (R3b) node[midway,above] {$1$};
  \draw[thick,->,blue  ] (R3a) -- (R2a) node[midway,above] {$1$};
  \draw[thick,->,blue  ] (R2a) -- (R1b) node[midway,above] {$1$};
  \draw[thick,->,blue  ] (R1a) -- (R0) node[midway,above] {$1$};

  \draw[thick,->,red ] (L5) -- (L4a) node[midway,above] {$2$};
  \draw[thick,->,red ] (L4b) -- (L3a) node[midway,above] {$2$};
  \draw[thick,->,red ] (L3a) -- (L2a) node[midway,above] {$2$};
  \draw[thick,->,red ] (L3b) -- (L2b) node[midway,above] {$2$};
  \draw[thick,->,red ] (L2b) -- (L1) node[midway,above] {$2$};
  \draw[thick,->,red ] (L1) -- (L0) node[midway,above] {$2$};

  \draw[thick,->,red ] (R4) -- (R3a) node[midway,above] {$2$};
  \draw[thick,->,red ] (R3b) -- (R2b) node[midway,above] {$2$};
  \draw[thick,->,red ] (R2b) -- (R1a) node[midway,above] {$2$};
  \draw[thick,->,red ] (R1b) -- (R0) node[midway,above] {$2$};
\end{tikzpicture}
\caption{\label{fig:YoungDemazureCrystal} The crystal on $\RF^3(21534)$ and the subcrystal $\RFYC(21534)$ ({\bf bold}).}
\end{center}
\end{figure}
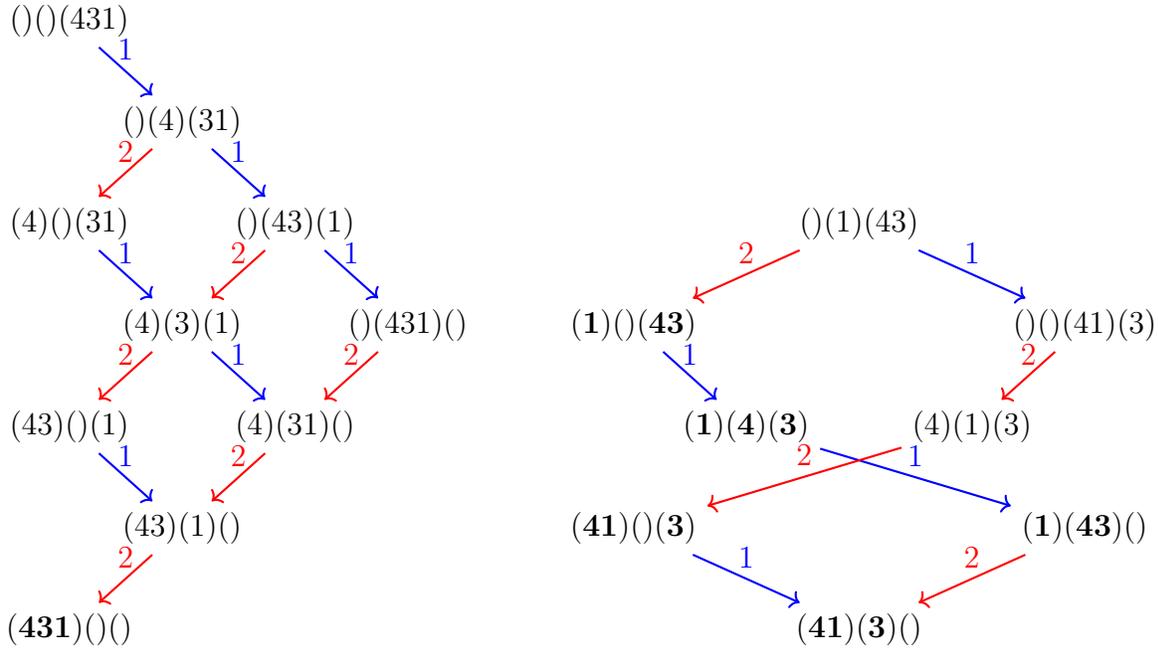

The Demazure crystal structure provides another method for expanding Young Schubert polynomials in Young key polynomials, cf. \cite[Corollary 5.12]{AssSch18}.

\begin{ex}
Figure~\ref{fig:YoungDemazureCrystal} demonstrates that $\ysch_{43512} = \ykey_{00003} + \ykey_{00201}$, where $\ykey_{00003} = x_5^3$ is the bolded Demazure truncation of the left component and $\ykey_{00201} = x_4x_5^2+x_4^2x_5+x_3x_5^2 + x_3x_4x_5 + x_3^2x_5$ is the bolded Demazure truncation of the right component.
\end{ex}

\section*{Acknowledgements}
We thank Sami Assaf and Anne Schilling for suggesting a connection with the Demazure crystal structure for Schubert polynomials, and Martha Precup and Brendon Rhoades for pointing out further recent appearances of the Young/reverse dichotomy for polynomials and tableaux. We also thank Vic Reiner for pointing out a connection to evacuation in Section 3.

\bibliographystyle{alpha}
\bibliography{youngbib}
\label{sec:biblio}

\appendix

\section{Intersections of polynomial families}

In this appendix we determine the polynomials that are both quasisymmetric Schur and Young quasisymmetric Schur polynomials.  Throughout, let $\ell$ be the length of $\alpha$ and $n\ge \ell$ the number of variables.

\begin{lemma}\label{lem:samecomposition}
If $\yqs_\alpha(x_1, \ldots , x_n) = \qs_\beta(x_1, \ldots , x_n)$, then $\alpha = \beta$.
\end{lemma}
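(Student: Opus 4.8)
The plan is to recover the indexing composition from the polynomial itself by looking at the fundamental expansion and extracting a distinguished ``leading'' term, and to show that the recipe gives $\beta$ when we read $\qs_\beta$ and gives $\alpha$ when we read $\yqs_\alpha$. Since $\yqs_\alpha = \qs_\beta$ is a single element of $\QSym_n$, its fundamental expansion $P = \sum_\gamma c_\gamma F_\gamma$ (with $c_\gamma \ge 0$) is intrinsic, and the distinguished term is determined by $P$ alone; matching the two readings will force $\alpha = \beta$.

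First I would record the key triangularity coming from Proposition~\ref{prop:schurexpansion}. Since $\qs_\beta$ is a positive summand of $s_{\sort(\beta)}$, its fundamental support is contained in that of $s_{\sort(\beta)}$, whose terms are indexed by the descent compositions of standard Young tableaux of shape $\sort(\beta)$; every such composition $\gamma$ satisfies $\sort(\gamma) \trianglelefteq \sort(\beta)$ in dominance. Thus every $\gamma$ with $c_\gamma \neq 0$ has $\sort(\gamma) \trianglelefteq \sort(\beta)$, so $\sort(\beta)$ is the dominance-maximum among the sorted partitions occurring in the $F$-support of $P$. The crucial refinement is that $\beta$ is the \emph{unique} composition in this support attaining that maximal sort: $F_\beta$ occurs (with coefficient $1$), and no other $F_\gamma$ with $\sort(\gamma) = \sort(\beta)$ occurs. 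Consequently $\beta$ is characterised purely in terms of $P$ as the unique element of the $F$-support whose sorted partition is dominance-maximal. I would then run the identical analysis with $P$ read as $\yqs_\alpha$: by Proposition~\ref{prop:yqsqsexpansion} the $F$-support of $\yqs_\alpha$ is the reversal of the $F$-support of $\qs_{\rev(\alpha)}$, and since $\sort$ is invariant under reversal, the previous paragraph applied to $\qs_{\rev(\alpha)}$ shows that $\alpha = \rev(\rev(\alpha))$ is likewise the unique element of the $F$-support of $P$ of dominance-maximal sort. As this element is determined by $P$, we get $\alpha = \beta$.

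The main obstacle is establishing the uniqueness half of the leading-term property, namely that $\beta$ is the only composition in the $F$-support of $\qs_\beta$ with $\sort = \sort(\beta)$. The containment $\sort(\gamma) \trianglelefteq \sort(\beta)$ and the appearance of $F_\beta$ are essentially the Haglund--Luoto--Mason--van Willigenburg fundamental expansion, but the uniqueness requires checking that the rearrangements of $\lambda = \sort(\beta)$ that arise as descent compositions of shape-$\lambda$ standard tableaux are distributed \emph{one-to-one} among the various $\qs_{\beta'}$ with $\sort(\beta') = \lambda$ (recall $s_\lambda = \sum_{\sort(\beta')=\lambda}\qs_{\beta'}$ partitions the support of $s_\lambda$). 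Once this bookkeeping is in place, everything else is formal: the dominance-maximal sort is well defined because $\sort(\beta)$ dominates all sorts in the support, and the unique maximal-sort term is simultaneously forced to equal $\beta$ and $\alpha$. I would either cite the $F$-leading-term property for $\qs$ (and its Young analogue for $\yqs$, from \cite{LMvWbook}) directly, or derive the Young case from the reverse case through Proposition~\ref{prop:yqsqsexpansion} to keep the argument self-contained.
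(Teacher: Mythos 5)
Your route is genuinely different from the paper's, which argues directly with tableaux: the paper first shows $\beta$ must rearrange $\alpha$ (comparing maximal variable degrees, as in the proof of Theorem~\ref{thm:atomyatom}), then takes the element of $\YCT(\alpha)$ whose row-$j$ entries all equal $j$ and shows, via the forced first column, a column-set comparison, and a triple-condition violation, that no element of $\RCT(\beta)$ can have its weight unless $\beta=\alpha$. Your fundamental-expansion strategy could be made to work, but as written it has a genuine gap at precisely the point you call ``the main obstacle'': the claim that $\beta$ is the \emph{unique} composition in the $F$-support of $\qs_\beta$ whose sort equals $\sort(\beta)$. You never prove this, and neither of your proposed fixes closes it. The leading-term property you would cite gives that $F_\beta$ occurs with coefficient $1$ and that the expansion is unitriangular with respect to a suitable total order on compositions; it does not exclude a second rearrangement $\gamma$ of $\sort(\beta)$, lying below $\beta$ in that order, from occurring in the support --- and if such a $\gamma$ occurred, your ``unique element of dominance-maximal sort'' would not even be well defined, so the intrinsic characterization on which the entire proof rests would collapse. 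Your other suggestion (deriving the Young case from the reverse case through Proposition~\ref{prop:yqsqsexpansion}) concerns the transfer step, which you already have; it says nothing about uniqueness.

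The uniqueness claim is in fact true, but it needs an actual argument rather than a citation placeholder. One way to close it: for $\gamma$ a rearrangement of $\lambda=\sort(\beta)$, the coefficient of $F_\gamma$ in $s_\lambda$ is the number of standard Young tableaux of shape $\lambda$ with descent set exactly $S(\gamma)$, which is at most the number with descent set contained in $S(\gamma)$; by standardization the latter equals the Kostka number $K_{\lambda\gamma}=K_{\lambda\lambda}=1$. Since $s_\lambda=\sum_{\sort(\beta')=\lambda}\qs_{\beta'}$ with all $F$-expansions nonnegative and each $\qs_{\beta'}$ containing $F_{\beta'}$ at least once, each such $F_{\beta'}$ has coefficient exactly $1$ in $s_\lambda$ and therefore appears in $\qs_{\beta'}$ and in no other $\qs_{\beta''}$ with $\sort(\beta'')=\lambda$. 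That is exactly the one-to-one distribution you asked for. With this inserted, your proof goes through; without it, the central step of your argument is an assertion, not a proof.
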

\begin{proof}
By the same argument in the proof of Theorem~\ref{thm:atomyatom}, if $\yqs_\alpha = \qs_\beta$, then $\beta$ must be a rearrangement of $\alpha$.  Therefore suppose $\beta$ rearranges $\alpha$ and the length of $\alpha$ (and thus of $\beta$) is $\ell$. Let $T\in \YCT(\alpha)$ be such that the entries in each row $j$ are all $j$. Suppose $S\in \RCT(\beta)$ has the same weight as $T$. Since the first column of $S$ must increase strictly from top to bottom, and we must use all entries $1$ through $\ell$ in $S$, the first entry in each row $j$ of $S$ is forced to be $j$. By the same argument in the proof of Theorem~\ref{thm:atomyatom}, the set of entries in each column of $S$ must be the same as that in the corresponding column of $T$. 

Suppose $\beta\neq \alpha$, and let $i$ be the largest index such that $\beta_i \neq \alpha_i$. Consider rows $\ell$ down to $i+1$, where the row lengths are identical in $\alpha$ and $\beta$. Since entries of $S$ must decrease along rows, the $\ell$'s can only go in the $\ell$th row of $S$, and thus completely fill the $\ell$th row of $S$. By the same reasoning, all $\ell-1$'s must go in row $\ell-1$ of $S$, and so forth down to (and including) row $i+1$. Now, if $\beta_i<\alpha_i$, it is impossible to place $\alpha_i$ many $i$'s in row $i$ of $S$, but $i$'s cannot go in any lower row of $S$ since entries must decrease along rows, and cannot go in any higher row of $S$ since all boxes above row $i$ are occupied, so we cannot construct $S$ of the same weight as $T$. So assume $\beta_i > \alpha_i$. Then we must place $\alpha_i$ many $i$'s in the first $\alpha_i$ boxes of the $i$th row. The next entry placed in this row (in column $\alpha_i+1$) is some $x<i$. Since the column sets of $T$ and $S$ must agree and each column set of $T$ is a subset of the previous one, there must be an $x$ in column $\alpha_i$ of $S$. Since all boxes weakly above row $i$ in this column are occupied by entries at least $i$, $x$ must be strictly below row $i$ in this column. But then these two copies of $x$ must violate one of the triple conditions in $S$. It follows that if $\alpha\neq \beta$, then there is no $S\in \RCT(\beta)$ with the same weight as $T\in \YCT(\alpha)$, and thus $\yqs_\alpha \neq \qs_\beta$. 
\end{proof}

Therefore, the question reduces to determining when $\yqs_\alpha(x_1, \ldots , x_n)=\qs_\alpha(x_1, \ldots , x_n)$.

\begin{lemma}\label{lem:offbytwo}
Let $\alpha$ be a composition of length $\ell$. If there are $i<k$ such that 
\begin{enumerate}
\item $\alpha_i \le \alpha_k-2$ and there is no $i<j<k$ such that $\alpha_j = \alpha_k-1$, or
\item $\alpha_i \ge \alpha_k+2$ and there is no $i<j<k$ such that $\alpha_j = \alpha_i-1$
\end{enumerate}
then $\yqs_\alpha(x_1, \ldots , x_n) \neq \qs_\alpha(x_1, \ldots , x_n)$.
\end{lemma}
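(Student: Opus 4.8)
The plan is to exhibit, in each case, a single weight that is realized by a filling in exactly one of $\RCT(\alpha)$ and $\YCT(\alpha)$; by Lemma~\ref{lem:samecomposition} it suffices to separate $\qs_\alpha$ from $\yqs_\alpha$ this way. I would first reduce to case (1) alone. Using the reversal identity $\yqs_\beta(x_1,\ldots,x_n)=\qs_{\rev(\beta)}(x_n,\ldots,x_1)$ of Proposition~\ref{prop:yqstoqs}, one checks directly that $\alpha$ satisfies the hypotheses of case (2) at indices $i<k$ if and only if $\rev(\alpha)$ satisfies those of case (1) at indices $\ell+1-k<\ell+1-i$. Applying Proposition~\ref{prop:yqstoqs} twice then converts a separation of $\qs_{\rev(\alpha)}$ from $\yqs_{\rev(\alpha)}$ into a separation of $\qs_\alpha$ from $\yqs_\alpha$ after the variable reversal $x_j\mapsto x_{n+1-j}$, which is a ring automorphism. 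So I only need to treat case (1).

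For case (1), let $S$ be the filling of $D(\alpha)$ that is \emph{canonical} — every box in row $j$ holds the value $j$ — except that the rightmost box of the long row $k$ (in column $\alpha_k$, which is not the first column since $\alpha_k\ge\alpha_i+2\ge 2$) is changed from $k$ to $i$. Then $\wt(S)=\alpha+e_i-e_k$, i.e. the value $i$ occurs $\alpha_i+1$ times and $k$ occurs $\alpha_k-1$ times. The first task is to confirm $S\in\RCT(\alpha)$: rows still weakly decrease (the altered row reads $k,\ldots,k,i$ with $i<k$), the first column is untouched and still strictly increases, and no column repeats. Since the canonical filling is already a valid reverse composition tableau, any triple not meeting the altered box retains its inversion status, so only triples through that box must be rechecked.

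The main obstacle, and the point where the hypothesis is used, is this triple analysis. The only potentially fatal configuration is a Type B triple whose horizontal pair is the altered pair $(k,i)$ in row $k$ and whose lower box sits below the left box in a row $t<k$; the Type B requirement that the upper row be strictly longer while the lower row still reaches column $\alpha_k-1$ forces $\alpha_t=\alpha_k-1$. This triple has values $(z,y,x)=(k,t,i)$ and fails to be an inversion triple exactly when $k\ge t\ge i$; since $t=i$ would force $\alpha_i=\alpha_k-1$, contradicting $\alpha_i\le\alpha_k-2$, the dangerous case is precisely $i<t<k$. The case (1) hypothesis forbids any $i<j<k$ with $\alpha_j=\alpha_k-1$, which is exactly what eliminates it; every other triple meeting the altered box (the remaining Type A and Type B configurations with the box as apex or as the right box of its own row) is an inversion triple unconditionally. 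Hence $S\in\RCT(\alpha)$.

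Finally I would show $\wt(S)$ is unattainable in $\YCT(\alpha)$ by a short counting argument. In any $T\in\YCT(\alpha)$ the first-column entries satisfy $c_1<c_2<\cdots$ with $c_j\ge j$, and since rows weakly increase, any box of value $v\le i$ lies in a row $j$ with $j\le c_j\le v\le i$; thus all entries of value at most $i$ occupy rows $1,\ldots,i$, giving $\sum_{v\le i}\wt(T)_v\le \alpha_1+\cdots+\alpha_i$. But $\sum_{v\le i}\wt(S)_v=\alpha_1+\cdots+\alpha_i+1$, so no $T\in\YCT(\alpha)$ can have weight $\wt(S)$. Therefore $\qs_\alpha\neq\yqs_\alpha$, which finishes case (1) and, via the reduction of the first paragraph, case (2) as well.
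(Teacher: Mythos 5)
Your proof is correct, and its core --- the witness filling for case (1) --- is exactly the paper's: the canonical filling of $D(\alpha)$ (every entry equal to its row index) with the last entry of row $k$ changed to $i$, followed by the observation that its weight cannot be realized by any element of $\YCT(\alpha)$. Two points genuinely differ. First, case (2): the paper repeats the construction in mirror image (a Young filling whose weight no reverse composition tableau attains), whereas you reduce case (2) to case (1) by checking that the hypotheses of (2) for $\alpha$ are the hypotheses of (1) for $\rev(\alpha)$ and then invoking Proposition~\ref{prop:yqstoqs} twice together with the ring automorphism $x_j\mapsto x_{n+1-j}$; this is a legitimate and arguably cleaner route, since it halves the case analysis and lets the flip-and-reverse symmetry do the work. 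Second, your unattainability argument is a counting refinement of the paper's: the paper forces rows $1,\ldots,i-1$ of a putative $T\in\YCT(\alpha)$ to be filled by their row indices and then finds no room for $\alpha_i+1$ copies of $i$ in row $i$, while you simply bound $\sum_{v\le i}\wt(T)_v\le\alpha_1+\cdots+\alpha_i$ via $v\ge c_j\ge j$; both rest on the same structural facts (strictly increasing first column, weakly increasing rows). Finally, your triple verification is more detailed than the paper's one-line assertion and correctly isolates the Type B configuration with lower row of length $\alpha_k-1$ as the only place hypothesis (1) is needed; one small imprecision is that the Type A case with the altered box as apex is not excluded ``unconditionally'' but uses $\alpha_i\le\alpha_k-2$ (a row $t=i$ meeting column $\alpha_k$ would need $\alpha_i\ge\alpha_k$), which is of course available.
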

\begin{proof}
For (1), create $S\in \RCT(\alpha)$ by letting all entries be equal to their row index, except the last entry of row $k$ is $i$. The condition that there is no $i<j<k$ such that $\alpha_j = \alpha_k-1$ ensures $S$ does not violate the triple condition (B). Then $\wt(S) = (\alpha_1, \ldots \alpha_i+1, \ldots , \alpha_k-1, \ldots \alpha_\ell, 0, \ldots , 0)$. One cannot create $T\in \YCT(\alpha)$ with weight equal to that of $S$. The first column of $T$ must contain the entries $1$ through $\ell$ from bottom to top,  i.e., the first entry of each row is the row index. Then since entries must increase along rows of $T$, all $\alpha_1$ $1$'s must be in row $1$, $\alpha_2$ $2$'s in row $2$, etc, but then one cannot place $\alpha_i+1$ $i$'s in row $i$, since its length is $\alpha_i$. Hence $\yqs_\alpha \neq \qs_\alpha$.
The proof of (2) is similar, starting by creating $T\in\YCT(\alpha)$ whose entries in each row are equal to their row index, except the last entry of row $i$ is $k$.
\end{proof}

It follows from Lemma~\ref{lem:offbytwo} that the only $\alpha$ where $\yqs_\alpha(x_1, \ldots , x_n)$ could possibly be equal to $\qs_\alpha(x_1, \ldots , x_n)$ are those $\alpha$ such that for each $i$, $|\alpha_i-\alpha_{i+1}|\le 1$. 

\begin{lemma}\label{lem:23}
Let $\alpha$ be a composition of length $\ell$ and $n>\ell$. If $2\le \alpha_i < \alpha_{i+1}$ or $2\le \alpha_{i+1} < \alpha_i$ for some $i$, then $\yqs_\alpha(x_1, \ldots , x_n) \neq \qs_\alpha(x_1, \ldots , x_n)$.
\end{lemma}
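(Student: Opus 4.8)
The plan is to exhibit a single weak composition $w$ that occurs as the weight of some tableau in $\YCT(\alpha)$ but of no tableau in $\RCT(\alpha)$; since $\yqs_\alpha$ and $\qs_\alpha$ are the respective weight generating functions, such a $w$ forces $\yqs_\alpha \neq \qs_\alpha$. First I would collapse the two hypotheses into one. By Proposition~\ref{prop:yqstoqs} together with the variable reversal $x_j \leftrightarrow x_{n+1-j}$, we have $\yqs_\alpha(x_1,\ldots,x_n)=\qs_\alpha(x_1,\ldots,x_n)$ if and only if $\yqs_{\rev(\alpha)}(x_1,\ldots,x_n)=\qs_{\rev(\alpha)}(x_1,\ldots,x_n)$; and a pair witnessing $2\le \alpha_{i+1}<\alpha_i$ for $\alpha$ becomes a pair witnessing $2\le (\rev\alpha)_k<(\rev\alpha)_{k+1}$ for $\rev(\alpha)$, so it suffices to treat case (1), namely $2\le\alpha_i<\alpha_{i+1}$. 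Invoking Lemma~\ref{lem:offbytwo} I may further assume consecutive parts differ by at most $1$, so $\alpha_{i+1}=\alpha_i+1=m+1$ with $m\ge 2$; and since specializing $x_{\ell+2}=\cdots=x_n=0$ is a ring homomorphism under which the two polynomials restrict to their $(\ell+1)$-variable versions, it is enough to produce the distinguishing weight using $n=\ell+1$ variables.

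The mechanism I would exploit is the asymmetry between the two models. In a Young filling entries increase along rows, so a value exceeding $\ell$ can be placed at the \emph{right} end of a short row without disturbing the strictly increasing first column; in a reverse filling entries decrease along rows, so the unique maximal value $\ell+1$ is forced to the \emph{top of the first column}. Concretely, starting from the canonical tableau $T_0\in\YCT(\alpha)$ in which every entry of row $j$ equals $j$ (valid as in the proof of Lemma~\ref{lem:samecomposition}), I would replace the rightmost entry of row $i$ by $\ell+1$, obtaining a filling $T$ of weight $w$ with $w_i=\alpha_i-1$, $w_{\ell+1}=1$, and $w_j=\alpha_j$ otherwise. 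In the \emph{clean} situation where no row above row $i$ is strictly shorter than row $i$, this $T$ lies in $\YCT(\alpha)$: the only triples affected involve the modified box, and since its entry $\ell+1$ strictly exceeds every other entry, every affected Type I triple and every Type II triple having $\ell+1$ as its third box satisfies the Young inversion condition automatically. I would then argue that $w$ is unattainable in $\RCT(\alpha)$, because placing the forced $\ell+1$ at the top of the first column leaves only $\ell-1$ strictly increasing first-column values in $\{1,\ldots,\ell\}$, and the weakly-decreasing-row and triple conditions cannot then reproduce both the depleted supply recorded by $w_i=\alpha_i-1$ and the required multiplicities of the smaller values; this is the same obstruction as in the two-row verification that $x^{131}$ occurs in $\YCT(23)$ but not in $\RCT(23)$.

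The hard part is that this direct construction can violate a \emph{Type II} Young inversion triple. If some row above row $i$ is strictly shorter than row $i$ --- for instance $\alpha=(2,3,2,1)$, whose length-$1$ top row sits above every longer row --- then placing $\ell+1$ at the end of any row of length at least $2$ produces a genuine Type II triple with entries $z,\ x=\ell+1,\ y$ satisfying $x\ge y\ge z$, which is \emph{not} an inversion triple, so $T\notin\YCT(\alpha)$. I expect this to be the principal obstacle. I would resolve it by an induction that peels trailing short rows: when $\alpha_\ell=1$, one shows that in both $\YCT(\alpha)$ and $\RCT(\alpha)$ the single occurrence of the maximal value $\ell+1$ is forced into the corner box $(\ell,1)$, whereupon the coefficients in $\yqs_\alpha$ and $\qs_\alpha$ of every monomial divisible by $x_{\ell+1}$ coincide with the corresponding coefficients for the shorter shape $\alpha'=(\alpha_1,\ldots,\alpha_{\ell-1})$ in $\ell$ variables. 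The pair $(m,m+1)$ survives in $\alpha'$, so the inductive hypothesis furnishes a distinguishing monomial for $\alpha'$, which lifts to one for $\alpha$; the base of the induction is exactly the clean case of the previous paragraph.

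The two steps that will carry the real weight of the argument are the triple-condition bookkeeping in the clean case --- verifying $T\in\YCT(\alpha)$ and, more substantially, that $w$ genuinely fails to occur in $\RCT(\alpha)$ --- and the ``forced into the corner'' claim in the inductive step, where one must rule out \emph{every} alternative placement of $\ell+1$ in a Young filling using the Type I and Type II conditions. I anticipate the latter ruling-out to be the most delicate point, since it is precisely the interaction between a large entry and shorter higher rows (the source of the obstruction above) that must be shown to eliminate all placements except the corner.
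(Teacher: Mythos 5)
Your preliminary reductions are all sound: case (2) does reduce to case (1) via Proposition~\ref{prop:yqstoqs}, Lemma~\ref{lem:offbytwo} does let you assume consecutive parts differ by at most $1$, restriction to $n=\ell+1$ variables is legitimate, your clean-case filling $T$ is genuinely in $\YCT(\alpha)$, and you have correctly identified the Type~II obstruction that any construction of this kind must avoid. The gap is in the machinery you propose for the non-clean case. First, the induction does not cover all shapes: dirtiness does not force $\alpha_\ell=1$. Take $\alpha=(2,1,2,3,2,1,2)$. Consecutive parts differ by at most $1$, Lemma~\ref{lem:offbytwo} is silent (every pair of parts differing by $2$ has an intermediate part in between), and the hypothesis of the lemma holds at the ascent $i=3$ and at the descent $i=4$. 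But both admissible indices are dirty --- there is a part equal to $1$ above the ascent and below the descent --- and $\alpha_1=\alpha_\ell=2$, so neither your clean case nor your peeling step applies; since $\alpha$ is a palindrome, passing to $\rev(\alpha)$ changes nothing. Your proof has no move on this shape, and no amount of re-choosing the index or iterating the peel rescues it.

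Second, the claim on which the peeling step rests is false. In a Young composition tableau with $\alpha_\ell=1$, an occurrence of the maximal value is \emph{not} forced into the corner box $(\ell,1)$: for $\alpha=(3,1)$ and $n=3$, the filling with bottom row $1,1,3$ and top row $2$ is a valid element of $\YCT(31)$ (its only Type~II triple has $z=1$, $x=1$, $y=2$, an inversion triple), and indeed it appears in Figure~\ref{fig:YQS31} of the paper; its $3$ sits at the end of the bottom row. The forcing you want is true in the reverse model $\RCT$ --- there the maximal entry must head its row and the first column is strictly increasing --- but it fails in the Young model, which is precisely the asymmetry you exploit elsewhere, so the coefficient identity between $\yqs_\alpha$ and $\yqs_{\alpha'}$ underlying the peel breaks down. (A lesser but real issue: your clean-case claim that $w$ is unattainable in $\RCT(\alpha)$ is asserted rather than proved, and checking small cases such as $\alpha=(2,3,3)$ shows it requires a genuine forced-placement analysis.) For comparison, the paper sidesteps the clean/dirty dichotomy entirely with a different filling: it places the value $i+2$ (not $\ell+1$) at the end of row $i$ and simultaneously raises every entry of each row $r>i+1$ to $r+1$. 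Then every entry above row $i+1$ strictly exceeds $i+2$, so all Type~II triples are automatically Young inversion triples for \emph{every} shape, and the resulting weight admits a short forced-placement argument in $\RCT(\alpha)$ that terminates in a Type~B violation. If you want to repair your argument, that shift trick is the missing idea.
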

\begin{proof}
Suppose $2\le \alpha_i < \alpha_{i+1}$. Construct $T\in \YCT(\alpha)$ by letting all entries be equal to their row index in the first $i+1$ rows, except the last entry of row $i$ is $i+2$, and then all entries of each row $r$ for $r>i+1$ are $r+1$. Since $\alpha_i < \alpha_{i+1}$, the triple condition (II) is not violated. Now attempt to construct $S\in \RCT(\alpha)$ with weight equal to that of $T$. All $\ell+1$'s must go in row $\ell$, then all $\ell$'s in row $\ell-1$, down to and including row $i+2$. The sole $i+2$ must be the first entry in row $i+1$, since all boxes above row $i+1$ are occupied. The $i+1$'s can't all fit in row $i+1$, so necessarily the first entry in row $i$ must be $i+1$ if all $i+1$'s are to be placed. This means all $i+1$'s must be placed in row $i+1$ or row $i$. Since $\alpha_i < \alpha_{i+1}$, they cannot all be placed in row $i$; at least one must be in row $i+1$, immediately following the entry $i+2$. But then the $i+1$ in row $i$, column $1$, the $i+2$ in row $i+1$, column $1$, and the $i+1$ in row $i+1$, column $2$ violate the triple condition (B).

For $\alpha$ satisfying $2\le \alpha_{i+1} < \alpha_i$, a similar argument works by letting $S\in \RCT(\alpha)$ be such that entries are equal to to their row index in the first $i-1$ rows, then all entries of each row $r$ for $r \ge i$ are $r+1$, except the last entry of row $i+1$ is $i$.
\end{proof}

\noindent\emph{Proof of Theorem~\ref{thm:yqsqs}:}
It follows from Lemmas~\ref{lem:offbytwo} and \ref{lem:23} that the only $\alpha$ where $\yqs_\alpha$ could possibly be equal to $\qs_\alpha$ are those $\alpha$ whose parts are all the same, those $\alpha$ whose parts are all $1$ or $2$, or (only when $n=\ell(\alpha)$) those $\alpha$ whose consecutive parts differ by at most one. 

If all parts of $\alpha$ are the same, then $\qs_\alpha$ and $\yqs_\alpha$ are both equal to the Schur function $s_\alpha$ by Proposition~\ref{prop:schurexpansion}).

If all parts of $\alpha$ are $1$ or $2$, define a map $\psi$ on tableaux of shape $\alpha$ by swapping the entries in each row of length $2$, and then reordering the rows so the first column is increasing from top to bottom. We will show that $\psi$ restricts to a bijection between $\YCT(\alpha)$ and $\RCT(\alpha)$.  First we observe $\psi$ maps each $T\in \YCT(\alpha)$ to a tableau of shape $\alpha$: if a row of length $1$ is above a row of length $2$ in $T$, then the entry in the row of length $1$ must be larger than both entries of the row of length $2$, the first due to the increasing first column, and the second due to the triple condition (II). If a row of length $1$ is below a row of length $2$, then the entry in the row of length $1$ must be smaller that both entries of the row of length $2$, due to the increasing first column and the fact that entries increase along rows. Hence re-ordering occurs only amongst rows of length $2$ that do not have a row of length $1$ between them. In particular, re-ordering never exchanges a row of length $1$ and a row of length $2$.

Next we show that if $T\in \YCT(\alpha)$, then $\psi(T)$ has no repeated entries in any column. Suppose there are two instances of the same entry $i$ in $T$. The $i$ in column $2$ cannot be strictly above the $i$ in column $1$ because entries increase along rows and strictly increase up the first column. Also, the $i$ in column 2 cannot be strictly below the $i$ in column 1, or these two instances of $i$ would violate one of the triple conditions. Therefore, the $i$'s must be in the same row of $T$, and so cannot be in the same column of $\psi(T)$.

Now we show $\psi(T)\in \RCT(\alpha)$. By definition, entries decrease along rows of $\psi(T)$ and increase up the first column. First consider type B triples in $\psi(T)$, in which case the lower row in the triple has length $1$.  All entries above a given row of length $1$ in $T$ are strictly larger than that entry, since entries increase along rows and up the first column. So the same is true in $\psi(T)$, and the type B triple rule is satisfied. Now consider type A triples in $\psi(T)$. Then both rows in the triple have length $2$.  If these rows are not swapped under $\psi$, then in $T$ the second entry in the higher row is larger than the second entry in the lower row. Combining this with the triple condition in $T$ and the increasing first column, both entries of the higher row must be strictly larger than both entries of the lower row in $T$. 
This implies the same is true in $\psi(T)$, hence the type A triple rule is satisfied.  If they are swapped, we have
\[ T\ni\tableau{ x & y \\ & & \\ z & w} \mapsto \tableau{ w & z \\ & & \\ y & x} \in \psi(T)\]
where $z<x$ and $y<w$. Note that $z<y$, since $x<y$, but also $z<x$, so the triple involving $x,y,z$ in $\psi(T)$ satisfies the type A triple rule. 
Hence the map $\psi$  sends $\YCT(\alpha)$ to $\RCT(\alpha)$. A similar argument shows $\psi$ sends $\RCT(\alpha)$ to $\YCT(\alpha)$ and that $\psi \circ \psi$ is the identity when restricted to either $\RCT(\alpha)$ or $\YCT(\alpha)$, so $\psi:\YCT(\alpha) \rightarrow \RCT(\alpha)$ is a bijection. Since $\psi$ is also weight-preserving, this implies $\yqs_\alpha=\qs_\alpha$.

Finally if consecutive parts of $\alpha$ differ by at most one and $n=\ell(\alpha)$, the only element of $\YCT(\alpha)$ and $\RCT(\alpha)$ is the tableau whose entries in each row $i$ are all $i$, and thus $\yqs_\alpha = x^\alpha = \qs_\alpha$. We proceed by induction on the number of columns of $D(\alpha)$. Suppose $T\in \YCT(\alpha)$; certainly in the first column the entry in each row $i$ must be $i$. Suppose this is true for the first $c$ columns. Consider the boxes in column $c+1$ from highest to lowest. If the highest box $\mathfrak{b}$ is in the top row (i.e., row $n$) then it must have entry $n$ by the increasing row condition. If it is in row $i<n$, then row $i+1$ must be one box shorter than row $i$ (since $\mathfrak{b}$ is highest in its column and consecutive parts of $\alpha$ differ by at most one), and the box in row $i+1$, column $c$ must have entry $i+1$ (by assumption). Then $\mathfrak{b}$ cannot have entry greater than $i$ or the triple condition (II) is violated, so $\mathfrak{b}$ must have entry $i$ by the increasing row condition and the fact that (by assumption) the box immediately left of $\mathfrak{b}$ has entry $i$.

Now suppose the highest $k$ boxes in column $c$ have entry equal to their row index, and suppose the $(k+1)$th highest box $\mathfrak{b}$ is in row $i$. If every row above row $i$ has a box in column $c+1$, then by assumption these boxes all have entry equal to their row index, and then $\mathfrak{b}$ must have entry $i$ since its entry is at least $i$, and entries cannot repeat in a column. Otherwise, consider the lowest row $i'$ above row $i$ that does not have a box in column $c+1$. Since consecutive parts of $\alpha$ differ by at most one, the rightmost box in row $i'$ must be in column $c$, and thus by assumption it has entry $i'$. Then $\mathfrak{b}$ must have entry strictly smaller than $i'$, otherwise triple condition (II) is violated by $\mathfrak{b}$, the box immediately left of $\mathfrak{b}$ (which has entry $i$), and the rightmost box in row $i'$. But since $i'$ was the lowest row above row $i$ without a box in column $c+1$, there are boxes in rows $i+1, \ldots , i'-1$ and column $c+1$ with entry equal to their row index. Therefore, since entries cannot repeat in a column, the entry in $\mathfrak{b}$ must be $i$. 
It follows that all boxes in column $c+1$ have entry equal to their row index, and then that all boxes in $T$ have entry equal to their row index. A similar argument shows that $T$ is also the only element of $\RCT(\alpha)$.
\qed

\end{document}